\theoremstyle{plain}
\newtheorem*{thm*}{Theorem}
\newtheorem*{thmA}{Theorem A}
\newtheorem*{thmB}{Theorem B}
\newtheorem*{thmC}{Theorem C}
\newtheorem*{thmC'}{Theorem C'}
\newtheorem{thm}{Theorem}
\newtheorem{lemma}{Lemma}[section]
\newtheorem{prop}[lemma]{Proposition}
\newtheorem{claim}[lemma]{Claim}
\newtheorem{cor}[lemma]{Corollary}
\newcommand{\vertiii}[1]{{\left\vert\kern-0.25ex\left\vert\kern-0.25ex\left\vert #1 
    \right\vert\kern-0.25ex\right\vert\kern-0.25ex\right\vert}}
\theoremstyle{definition}
\newtheorem{defn}[lemma]{Definition}
\newtheorem{question}[lemma]{Question}
\title{\textbf{\boldmath Flexibility of the Hamiltonian adjoint action and classification of bi-invariant metrics}}
\author{\textbf{Lev Buhovsky, Maksim Stoki\'c}}
\begin{document}

\maketitle

\begin{abstract}
    On an open, connected symplectic manifold $(M,\omega)$, the group of Hamiltonian diffeomorphisms forms an infinite-dimensional Fr\'echet Lie group with Lie algebra $C^{\infty}_c(M)$ and adjoint action given by pullbacks. We prove that this action is flexible: for any non-constant $u \in C^{\infty}(M)$, every $f \in C^{\infty}_c(M)$ can be expressed as a weighted finite sum of elements from the adjoint orbit of $u$, with total weight bounded by constant multiple of $\|f\|_{\infty} + \|f\|_{L^1}$. Consequently, all $\mathrm{Ham}(M,\omega)$-invariant norms on $C^{\infty}_c(M)$ are dominated by a sum of $L^{\infty}$ and $L^1$ norms. As an application, we classify up to equivalence all bi-invariant pseudo-metrics on the group of Hamiltonian diffeomorphisms of an exact symplectic manifold, answering a question of Eliashberg and Polterovich.
\end{abstract}

\tableofcontents

\section{Introduction}

The group of Hamiltonian diffeomorphisms $\mathrm{Ham}(M,\omega)$, of a symplectic manifold $(M,\omega)$, is an infinite dimensional Fr\'echet Lie group, whose Lie algebra $\mathcal{A}$ is isomorphic to
\begin{itemize}
    \item the space of zero-mean normalized functions
    \[C^{\infty}_0(M):=\big\{f\in C^{\infty}(M)\Big\vert \int_{M}f\omega^n=0\big\}\]
    if the manifold $M$ is closed,
    \item the space of compactly supported functions $C^{\infty}_c(M)$, if the manifold $M$ is open.
\end{itemize}
In either case, the \textit{adjoint action} of $\mathrm{Ham}(M,\omega)$ on the Lie algebra $\mathcal{A}$ is given by pull-backs: 
\[\varphi\in\mathrm{Ham}(M,\omega),\,f\in \mathcal{A},\quad\quad\mathrm{Ad}_{\varphi}\,f=f\circ\varphi^{-1}.\]

\noindent If the manifold $M$ is closed and connected, we established in our earlier work that the adjoint action satisfies the following flexibility property:

\begin{thmA}[Theorem 1 in Buhovsky, Stoki\'c \cite{BS25}]\label{TheoremA}
    Let $(M,\omega)$ be a closed and connected symplectic manifold, and let $u\in C^{\infty}_{0}(M)$ be a non-zero function. There exists $N=N(u)\in\mathbb{N}$ such that for any $f\in C^{\infty}_0(M)$ with $\|f\|_{\infty}\leq 1$, one can write
    \[f=\sum_{i=1}^N\Phi_i^*u,\]
    for some Hamiltonian diffeomorphisms $\Phi_i\in\mathrm{Ham}(M,\omega)$.
\end{thmA}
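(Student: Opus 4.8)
The plan is to reduce the statement to a normalized \emph{model generator} together with a \emph{local} target, and then to build $f$ from pullbacks of $u$ by explicit Hamiltonian constructions. The structural fact to keep in mind throughout is that Hamiltonian diffeomorphisms preserve the Liouville measure, so every $\Phi^*u$ is equimeasurable with $u$; in particular $\|\Phi^*u\|_\infty=\|u\|_\infty$ and $\|\Phi^*u\|_{L^1}=\|u\|_{L^1}$, so obtaining a small function $f$ as a sum of $N(u)$ such ``large'' pullbacks is possible only through cancellation, and organizing that cancellation with a number of terms controlled by $u$ alone is where the real work lies.

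\textbf{Step 1: fragmentation.} I would fix a finite cover of $M$ by Darboux balls $B_1,\dots,B_m$ forming a connected chain (consecutive balls overlapping) together with a subordinate partition of unity $(\rho_k)$. Then $f=\sum_k\rho_k f$, where $\rho_k f$ is supported in $B_k$ with $\|\rho_k f\|_\infty\le 1$ but possibly nonzero mean. Since $|\int_M\rho_k f\,\omega^n|\le\|f\|_\infty\,\mathrm{vol}(M)$ and consecutive balls overlap, I would push these bounded masses along the chain by telescoping corrections supported in the overlaps, obtaining zero-mean functions $f_k$ supported in $B_k$ with $\|f_k\|_\infty\le C_0$, where $m$ and $C_0$ depend only on $(M,\omega)$ and the cover, and $f=\sum_k f_k$. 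Hence it suffices to write any zero-mean function supported in one fixed Darboux ball $B$, with sup norm at most $C_0$, as a sum of boundedly many (in terms of $u$) pullbacks of $u$.

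\textbf{Step 2: reduction to a symmetric model.} Next I would fix a convenient model $\bar u\in C^\infty_0(M)$: supported in a small Darboux ball $D_0$ and \emph{odd} in the first Darboux coordinate pair, i.e.\ invariant up to sign under the half-turn $(x_1,y_1)\mapsto(-x_1,-y_1)$. This half-turn is the time-one map of a radial Hamiltonian in $(x_1,y_1)$, cut off in the remaining coordinates, hence a compactly supported Hamiltonian diffeomorphism $\tau$ with $\tau^*\bar u=-\bar u$. Thus $-\bar u$ is a pullback of $\bar u$, so sums of pullbacks of $\bar u$ coincide with \emph{signed} sums; and a difference $\Phi^*\bar u-\Psi^*\bar u$ with $\Phi=\Psi$ outside a small ball is supported in that ball. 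With these two facts, and the same shear-based toolkit as in Step 3, one can transport small equimeasurable copies of $\bar u$ around $B$ by Hamiltonians, form localized pieces of controlled height and shape, and add finitely many of them to realize an arbitrary zero-mean $g$ supported in $B$ with $\|g\|_\infty\le C_0$; the number of pieces is bounded by an absolute constant $\bar N$ depending only on $C_0$ and the fixed data of $\bar u$. Now if one also has $\bar u=\sum_{j=1}^{N_1}\Psi_j^*u$ for the given $u$, then substituting into $g=\sum_i\Phi_i^*\bar u$ gives $g=\sum_{i,j}(\Psi_j\Phi_i)^*u$, a sum of $\bar N\,N_1$ pullbacks of $u$; so the whole theorem reduces to Step 3.

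\textbf{Step 3: writing $\bar u$ as a bounded sum of pullbacks of $u$ --- the main obstacle.} Since $u$ is non-constant it has a regular point, and a standard symplectic normal form produces a Darboux chart in which $u$ is an affine coordinate; after one preliminary half-turn move if necessary, I may assume some bounded sum of pullbacks of $u$ has a regular zero level, so that around a point of it $u$ equals $x_1$ in a Darboux chart. Inside such a chart, shear-type Hamiltonians (time-one flows of cut-off functions of $y_1$) change $u$ only within the chart, by a perturbation of ``slab'' type --- positive on one slab and negative on another, since the shear profile integrates to zero --- while a half-turn supported there adds a term equal to $-2u$ near the center and vanishing outside the chart. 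Iterating shear-and-half-turn moves along a chain of charts that sweeps across $M$, one can flatten $u$ to zero on larger and larger regions --- maintaining at each stage, since the intermediate functions remain sums of pullbacks of $u$, a regular zero level on which to continue --- and finally concentrate everything into a small ball, where a last round of such moves shapes it into $\pm\bar u$. The crux, and the step I expect to be hardest, is to keep the number of moves (hence of pullbacks) bounded by a quantity depending on $u$ only: naively each move multiplies the count and the $C^k$-geometry of the intermediate functions can deteriorate, shrinking the charts available for subsequent moves. Controlling this requires using the global symplectic topology of $(M,\omega)$ --- that $\mathrm{Ham}(M,\omega)$ transports small Darboux balls transitively and that finitely many charts of a definite size cover $M$ --- to perform large portions of the flattening in a single, well-chosen step rather than purely locally.
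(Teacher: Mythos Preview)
The present paper does not prove Theorem~A: it is quoted from \cite{BS25} as background, and no argument for it appears here. The closest thing in this paper is the proof of Theorem~\ref{AuxilaryTheorem} (the open-manifold analogue), which itself rests on the local Theorem~B, also imported from \cite{BS25}. So there is no proof in this paper to compare your proposal against directly.

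Comparing instead to the architecture visible in the proof of Theorem~\ref{AuxilaryTheorem}: your Step~1 (fragmentation over a Darboux cover, followed by mass-redistribution along overlaps to make each piece zero-mean) matches what the paper does. But your Steps~2--3 take a detour the paper avoids. Rather than introduce an auxiliary odd model $\bar u$ and then struggle (your Step~3) to express $\bar u$ as a bounded sum of pullbacks of $u$, the paper simply straightens $u$ once: by Claim~\ref{ClaimDarbouxChartForU} there is a single Darboux chart in which $u=x_1+c$. Each local zero-mean piece of $f$ is then Hamiltonianly transported into that one chart, and the local Theorem~B writes it there as $\sum_{i=1}^{N(n)}(\Phi_{i,+}^*x_1-\Phi_{i,-}^*x_1)$ with $N(n)$ depending only on the dimension. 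No iterative ``flattening of $u$ across $M$'' is ever attempted, and the term count is manifestly bounded by (cover data)$\times N(n)$.

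Your proposal, as written, has two genuine gaps. First, Step~2 asserts that any zero-mean $g$ supported in a ball is a sum of $\bar N$ pullbacks of $\bar u$ with $\bar N$ absolute; this \emph{is} the hard local statement (essentially Theorem~B), and you do not prove it --- ``the same shear-based toolkit as in Step~3'' is not an argument. Second, Step~3 is, as you yourself acknowledge, only a heuristic: the shear-and-half-turn scheme for collapsing $u$ into $\bar u$ comes with no mechanism to bound the number of moves, and the deterioration of intermediate geometry that you flag is a real obstruction, not a detail. Your half-turn observation (that an odd $\bar u$ satisfies $\tau^*\bar u=-\bar u$, turning signed sums into unsigned ones) is a genuinely useful idea for passing from differences to plain sums in the closed case, but it does not by itself close either gap.
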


\noindent An important corollary of the Theorem \hyperref[TheoremA]{A} is that any $\mathrm{Ham}(M,\omega)$-invariant norm $\|\cdot\|$ on the space $C^{\infty}_0(M)$ satisfies $\|\cdot\|\leq C\cdot \|\cdot\|_{\infty}$ for some constant $C$, which has further implications in Hofer's geometry (see Theorem 2. in \cite{BS25}).\\

Assume now that $(M,\omega)$ is an open and connected symplectic manifold. In this case we prove the following, slightly weaker result:

\begin{thm}\label{AuxilaryTheorem}
    Let $(M,\omega)$ be an open and connected symplectic manifold, and let $u\in C^{\infty}(M)$ be a non-constant function. There exists a constant $c=c(u)>0$ such that for any $f\in C^{\infty}_{0,c}(M)$ with $\|f\|_{\infty}\leq 1$ one can write
    \[
        f=\sum_{i=1}^N\Phi_{i,+}^*u-\Phi_{i,-}^*u,
        \qquad N\leq c(u)\cdot\bigl(\mathrm{Vol}(\mathrm{supp}\, f)+1\bigr),
    \]
    for some Hamiltonian diffeomorphisms $\Phi_{i,\pm}\in\mathrm{Ham}_c(M,\omega)$.
\end{thm}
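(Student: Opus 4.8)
\noindent The plan is to reduce the theorem to a \emph{local flexibility lemma} near a point where $\mathrm{d}u\neq 0$, and then to globalize it; the zero‑mean bookkeeping forced by the difference form $\Psi_{i,+}^{\ast}u-\Psi_{i,-}^{\ast}u$ is what produces the factor $\mathrm{Vol}(\mathrm{supp}\,f)+1$. First I would normalize $u$: since $u$ is non‑constant, fix $p$ with $\mathrm{d}u(p)\neq 0$; arguing as in the proof of Theorem~A of \cite{BS25}, there exist a Darboux ball $B\ni p$, a concentric sub‑ball $B_{\ast}\Subset B$, and an element $\Xi\in\mathrm{Ham}_{c}(M,\omega)$ whose restriction to $B$ carries $u$ to the coordinate function $x_{1}+\mathrm{const}$. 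As the $\mathrm{Ad}$‑orbit of $u$ coincides with that of $u\circ\Xi$, I may and do assume $u\equiv x_{1}+c$ on $B$.

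\emph{The local lemma, which is the crux.} The step I would devote the real work to is the following: there are $N_{0}=N_{0}(u)\in\mathbb{N}$ and a fixed constant $C_{0}$ such that every $g\in C^{\infty}_{c}(B_{\ast})$ with $\int_{M}g\,\omega^{n}=0$ and $\|g\|_{\infty}\le C_{0}$ can be written as $g=\sum_{i=1}^{N_{0}}\bigl(\Psi_{i,+}^{\ast}u-\Psi_{i,-}^{\ast}u\bigr)$ with $\Psi_{i,\pm}\in\mathrm{Ham}_{c}(M,\omega)$, whose supports need \emph{not} lie in $B_{\ast}$. Since $u=x_{1}+c$ on $B$, this reduces to a statement about the standard ball and the linear coordinate $x_{1}$, which I would establish by the methods of \cite{BS25}: one produces a large family of functions of the form $\Psi^{\ast}x_{1}-x_{1}$ from symplectic shear maps and their compositions in varying Darboux frames --- crucially, a shear is symplectic with no $C^{1}$‑smallness constraint on its profile, which is exactly what allows the a priori uncontrolled oscillation of $g$ to be absorbed --- and then reconstructs $g$ from boundedly many of these via an algebraic superposition step in the spirit of the sum‑to‑product identities, followed by a ``bounded‑derivative'' step in which a function of controlled $C^{1}$‑norm is split into $O(1)$ pieces, each realized as differences of pull‑backs by single Hamiltonian diffeomorphisms. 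The genuine difficulty, which I expect to be the main obstacle, is keeping the number of terms $N_{0}$ \emph{uniform} under only the hypothesis $\|g\|_{\infty}\le C_{0}$, with no control on derivatives; this is also where the difference form is unavoidable, since $\Psi^{\ast}u-u$ is compactly supported --- even though $u$ itself need not be --- and automatically of zero integral, which is precisely why the theorem is stated for $f\in C^{\infty}_{0,c}(M)$.

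\emph{Globalization.} Using a covering argument adapted to the symplectic geometry, cover a compact neighbourhood of $\mathrm{supp}\,f$ of volume $\le\mathrm{Vol}(\mathrm{supp}\,f)+1$ by sets $U_{1},\dots,U_{m}$, each of symplectic volume comparable to but smaller than $\tfrac12\mathrm{Vol}(B_{\ast})$, hence (by connectedness of $M$) each displaceable into $B_{\ast}$ by some $\theta_{j}\in\mathrm{Ham}_{c}(M,\omega)$, with $m\le c_{1}(u)\bigl(\mathrm{Vol}(\mathrm{supp}\,f)+1\bigr)$. For a subordinate partition of unity $\{\rho_{j}\}$ one has $f=\sum_{j}\rho_{j}f$ with $\|\rho_{j}f\|_{\infty}\le 1$, while $\mu_{j}:=\int_{M}\rho_{j}f\,\omega^{n}$ only satisfies $\sum_{j}\mu_{j}=0$ and $|\mu_{j}|<\tfrac12\mathrm{Vol}(B_{\ast})$. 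I would then peel off a mean bump: fixing a unit‑mass function $\beta_{j}$ spread over $U_{j}$ (so $\|\beta_{j}\|_{\infty}\asymp\mathrm{Vol}(U_{j})^{-1}$), put $r_{j}:=\rho_{j}f-\mu_{j}\beta_{j}$, which is supported in $U_{j}$, has zero integral, and has $\|r_{j}\|_{\infty}=O(1)$; the leftover $R:=\sum_{j}\mu_{j}\beta_{j}$ has zero integral and is decomposed greedily into ``transport atoms'' $t_{\ell}(\beta_{a_{\ell}}-\beta_{b_{\ell}})$, each of zero integral, each of sup‑norm $O(1)$ (since each $|\mu_{j}|$ is comparable to $\mathrm{Vol}(U_{j})$), each supported in $U_{a_{\ell}}\cup U_{b_{\ell}}$ (of volume $<\mathrm{Vol}(B_{\ast})$, hence displaceable into $B_{\ast}$), and at most $m$ of them. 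This displays $f$ as a sum of at most $2m$ functions, each of zero integral and sup‑norm bounded by the constant $C_{0}$ of the previous step, each supported on a set displaceable into $B_{\ast}$. For each such piece $h$ with displacing diffeomorphism $\theta$, I apply the local lemma to $h\circ\theta^{-1}\in C^{\infty}_{c}(B_{\ast})$ to write $h\circ\theta^{-1}=\sum_{i=1}^{N_{0}}(\Psi_{i,+}^{\ast}u-\Psi_{i,-}^{\ast}u)$ and push forward via $\Psi^{\ast}u\circ\theta=(\theta^{-1}\Psi)^{\ast}u$ to get $h=\sum_{i=1}^{N_{0}}\bigl((\theta^{-1}\Psi_{i,+})^{\ast}u-(\theta^{-1}\Psi_{i,-})^{\ast}u\bigr)$. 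Summing over the at most $2m$ pieces yields the asserted representation with $N\le 2mN_{0}\le c(u)\bigl(\mathrm{Vol}(\mathrm{supp}\,f)+1\bigr)$. Everything here apart from the local lemma is covering combinatorics and transport bookkeeping; the hard, derivative‑free uniform local lemma of the second step — and in particular its bounded count $N_{0}$ — is the one genuinely difficult ingredient.
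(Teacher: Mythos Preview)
Your identification of the local ingredient is correct: the ``local lemma'' you describe is precisely Theorem~B of \cite{BS25}, and the paper also begins by straightening $u$ to $x_1+c$ on a Darboux chart. The difficulty you flag there is real but already settled in \cite{BS25}; it is not where the work in this theorem lies.

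The gap is in your globalization. You assert that a compact neighbourhood of $\mathrm{supp}\,f$ can be covered by sets $U_1,\dots,U_m$, each of volume comparable to $\tfrac12\mathrm{Vol}(B_\ast)$ and each displaceable into $B_\ast$, with $m\le c_1(u)\bigl(\mathrm{Vol}(\mathrm{supp}\,f)+1\bigr)$. These two constraints are in tension and cannot both be satisfied in general. If the $U_j$ are Darboux balls of a fixed radius (so that displaceability into $B_\ast$ is clear), then the number needed to cover $\mathrm{supp}\,f$ is governed by the geometry of $\mathrm{supp}\,f$, not by its volume: a thin tube of volume~$1$ and length~$L$ requires order~$L$ such balls. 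If instead you allow the $U_j$ to be arbitrary shapes of prescribed volume, then ``volume $<\tfrac12\mathrm{Vol}(B_\ast)$'' does not imply displaceability into $B_\ast$; symplectic rigidity obstructs this in dimension $\ge 4$, and even in dimension~$2$ you would owe an argument. The same objection applies to your transport atoms supported on $U_{a_\ell}\cup U_{b_\ell}$: two far-apart pieces of small total volume are not automatically Hamiltonian-movable into $B_\ast$ as a pair.

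The paper's proof confronts exactly this point. It first covers $\mathrm{supp}\,f$ by Darboux balls $\mathcal B$ with \emph{no} control on $|\mathcal B|$, uses a partition of unity, and then inside each ball covers $\mathrm{supp}\,f_B$ by tiny standard cubes whose \emph{total volume} is at most $2\,\mathrm{Vol}(\mathrm{supp}\,f)$. The cubes are then transported along paths in a spanning tree of the nerve of the cover into a fixed reference cube $Q_L\subset U$, not one at a time but in \emph{batches}, each batch chosen to pack into $Q_L$; the number of batches is $N_L\asymp \mathrm{Vol}(\mathrm{supp}\,f)/\mathrm{Vol}(Q_L)$, and this is what produces the factor $\mathrm{Vol}(\mathrm{supp}\,f)+1$. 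The careful ordering (coloring of the balls, ordering of cubes along the tree) ensures that moving one cube does not disturb the others or the fixed bump $h$ used for the zero-mean correction. Your proposal skips this packing-and-transport mechanism, which is the actual content of the globalization.
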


\begin{cor}\label{FiniteVolumeCor}
    If $(M,\omega)$ has finite volume, then the constant $c(u)$ from Theorem~\ref{AuxilaryTheorem} can be replaced by a global constant $C=C(u)$ such that for any $f\in C^{\infty}_{0,c}(M)$ with $\|f\|_{\infty}\leq 1$ we have
    \[
        f=\sum_{i=1}^N\Phi_{i,+}^*u-\Phi_{i,-}^*u,
        \qquad N\leq C,
    \]
    where $\Phi_{i,\pm}\in\mathrm{Ham}_c(M,\omega)$. In particular, the number $N$ depends only on $u$ and not on $f$.
\end{cor}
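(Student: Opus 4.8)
The plan is to derive the corollary as a direct specialization of Theorem~\ref{AuxilaryTheorem}, using nothing beyond the monotonicity of volume. Fix a non-constant $u \in C^{\infty}(M)$ and let $c(u) > 0$ be the constant provided by Theorem~\ref{AuxilaryTheorem}. Given any $f \in C^{\infty}_{0,c}(M)$ with $\|f\|_{\infty} \le 1$, I would apply Theorem~\ref{AuxilaryTheorem} verbatim to obtain Hamiltonian diffeomorphisms $\Phi_{i,\pm} \in \mathrm{Ham}_c(M,\omega)$ and an integer $N$ with
\[
    f = \sum_{i=1}^{N} \Phi_{i,+}^{*}u - \Phi_{i,-}^{*}u, \qquad N \le c(u)\cdot\bigl(\mathrm{Vol}(\mathrm{supp}\,f) + 1\bigr).
\]

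Since $f$ is compactly supported we have $\mathrm{supp}\,f \subseteq M$, and since $(M,\omega)$ has finite volume this yields the $f$-independent bound $\mathrm{Vol}(\mathrm{supp}\,f) \le \mathrm{Vol}(M) =: V < \infty$. Substituting, $N \le c(u)\cdot(V+1)$, so the claim holds with $C = C(u) := c(u)\cdot(\mathrm{Vol}(M)+1)$, a constant depending only on $u$ and on the fixed ambient manifold, but not on $f$. This is the entire argument.

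There is essentially no obstacle here beyond Theorem~\ref{AuxilaryTheorem} itself: the real content — that the number of adjoint-orbit terms needed scales at worst linearly with the volume of $\mathrm{supp}\,f$ — is precisely that theorem, and the corollary merely caps that volume by $\mathrm{Vol}(M)$. I would, however, add one sanity check to justify why the statement is phrased for $C^{\infty}_{0,c}(M)$ rather than all of $C^{\infty}_c(M)$: each summand $\Phi_{i,+}^{*}u - \Phi_{i,-}^{*}u$ is compactly supported, and localizing to a compact set containing $\mathrm{supp}\,\Phi_{i,+}\cup\mathrm{supp}\,\Phi_{i,-}$ and applying the change of variables formula for the volume-preserving maps $\Phi_{i,\pm}$ shows that it integrates to zero over $M$. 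Hence the whole sum is automatically zero-mean, which — when $\mathrm{Vol}(M) < \infty$ — makes the normalization $\int_M f\,\omega^n = 0$ a necessary hypothesis for such a decomposition to exist at all.
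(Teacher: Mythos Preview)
Your proof is correct and is exactly the argument the paper intends: the corollary is stated without proof in the paper precisely because it follows immediately from Theorem~\ref{AuxilaryTheorem} by bounding $\mathrm{Vol}(\mathrm{supp}\,f)\le\mathrm{Vol}(M)<\infty$. Your added remark on why the zero-mean hypothesis is necessary is a nice sanity check, though not required for the derivation itself.
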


\noindent Let us note that for open symplectic manifolds of infinite volume, the support of $f$ can be arbitrarily large, whereas the support of $\sum_{i=1}^N \Phi_i^* u$ has volume at most $N$ times the volume of the support of $u$. Consequently, Theorem~\hyperref[TheoremA]{A} does not hold in its original form for open symplectic manifolds of infinite volume.\\

Theorem~\ref{AuxilaryTheorem} leads to a stronger flexibility statement for the Hamiltonian adjoint action. This result will later serve as a key ingredient in our study of bi-invariant metrics on $\mathrm{Ham}(M,\omega)$ (see Sections \ref{Section:bi-invariant-metrics} and \ref{Section:exact-symp-mfd} below). We now state the main flexibility theorem:

\begin{thm}\label{MainTheorem}
    Let $(M,\omega)$ be an open and connected symplectic manifold of infinite volume, and let $u\in C^{\infty}(M)$ be a non-constant function. 
    There exists a constant $C=C(u)>0$ such that for every $f\in C^{\infty}_{0,c}(M)$ with $\|f\|_{\infty}+\|f\|_{L^1}\leq 1$,
    one can find real numbers $c_1,\ldots,c_{\ell}$ and Hamiltonian diffeomorphisms 
    $\Phi_{i,\pm}\in\mathrm{Ham}_c(M,\omega)$ satisfying
    \[
        f=\sum_{i=1}^{\ell} c_i\cdot\bigl(\Phi_{i,+}^*u-\Phi_{i,-}^*u\bigr),
        \qquad \sum_{i=1}^{\ell} |c_i|\leq C.
    \]
\end{thm}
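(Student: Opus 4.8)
The plan is to derive Theorem~\ref{MainTheorem} from Theorem~\ref{AuxilaryTheorem} by a dyadic decomposition of $f$ according to the size of its support, combined with a rescaling trick that trades support volume for a small scalar coefficient. The point is that Theorem~\ref{AuxilaryTheorem} already handles functions with $\|f\|_\infty \le 1$ at the cost of a number of terms $N$ proportional to $\mathrm{Vol}(\mathrm{supp}\, f)+1$; if instead we are willing to multiply the target function by a small constant $c$, we can afford a correspondingly larger support. So the first step is to fix $u$ and let $c = c(u)$ be the constant from Theorem~\ref{AuxilaryTheorem}; then for any $g \in C^\infty_{0,c}(M)$ with $\|g\|_\infty \le 1$ and $\mathrm{Vol}(\mathrm{supp}\, g) \le V$, Theorem~\ref{AuxilaryTheorem} gives $g = \sum_{i=1}^N (\Phi_{i,+}^*u - \Phi_{i,-}^*u)$ with $N \le c(u)(V+1)$, i.e. a representation of $g$ with total weight at most $c(u)(V+1)$ — and hence a representation of $t\cdot g$ with total weight at most $t\cdot c(u)(V+1)$ for any $t>0$.

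The second step is the dyadic split. Given $f$ with $\|f\|_\infty + \|f\|_{L^1} \le 1$, for each $k \ge 0$ consider the region $A_k = \{x : 2^{-k-1} < |f(x)| \le 2^{-k}\}$ together with $A_{-1} = \{|f| = \|f\|_\infty\}$-type leftover; more cleanly, write $f = \sum_{k \ge 0} f_k$ where $f_k$ is a smooth function supported in a neighbourhood of $\{ |f| \ge 2^{-k}\} \setminus \{|f| \ge 2^{-k+1}\}$ (a genuine smooth partition of unity subordinate to the super-level sets, rescaled so that $\sum f_k = f$). Then $\|f_k\|_\infty \lesssim 2^{-k}$, while $\mathrm{Vol}(\mathrm{supp}\, f_k) \lesssim \mathrm{Vol}\{|f| \ge 2^{-k-1}\} \le 2^{k+1}\|f\|_{L^1} \le 2^{k+1}$ by Chebyshev. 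Normalising, $2^k f_k$ has sup-norm $O(1)$ and support volume $O(2^k)$, so by the rescaled Theorem~\ref{AuxilaryTheorem} it has a representation with total weight $O(c(u)\cdot 2^k)$; therefore $f_k = 2^{-k}(2^k f_k)$ has a representation with total weight $O(c(u) \cdot 2^{-k} \cdot 2^k) = O(c(u))$. This is the crucial cancellation: each dyadic piece costs a bounded amount of weight, independent of $k$.

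The third and final step is to sum a \emph{finite} number of these pieces and absorb the tail. Since $\sum_k \mathrm{Vol}(\mathrm{supp}\, f_k)$ need not converge (indeed on an infinite-volume manifold $f$ may have huge support), we cannot simply add all the $f_k$; instead, truncate at level $K$ with $2^{-K} \approx \varepsilon$ and note that the tail $r := \sum_{k > K} f_k = f \cdot \mathbf{1}_{\{|f| \lesssim \varepsilon\}}$ (smoothed) satisfies $\|r\|_\infty \lesssim \varepsilon$ and $\|r\|_{L^1} \le \|f\|_{L^1} \le 1$. Applying the same dyadic argument to $r$ recursively — or, more efficiently, iterating: at each stage peel off the pieces down to a level where the remaining sup-norm halves, which costs bounded weight and reduces $\|f\|_\infty$ by a factor $2$ while not increasing $\|f\|_{L^1}$ — produces a geometric series $\sum_j O(c(u)) \cdot 2^{-j}$ of total weights, which converges to $O(c(u))$. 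Collecting all terms from all stages gives the desired finite representation $f = \sum_i c_i(\Phi_{i,+}^*u - \Phi_{i,-}^*u)$ with $\sum |c_i| \le C(u)$.

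The main obstacle I anticipate is the smoothness bookkeeping in the dyadic decomposition: the super-level sets of $f$ are generically not manifolds with boundary (Sard's theorem only gives this for a.e. level), and one must produce genuinely smooth $f_k$ summing to $f$ with the stated sup-norm and support-volume bounds, without the number of nonzero pieces or the multiplicative constants degenerating. I expect this to be handled by choosing the dyadic cutoffs at regular values, using a smooth partition of unity on $\mathbb{R}$ pulled back via $f$, and accepting slightly worse (but still absolute) constants; the volume bound survives because $\mathrm{supp}\, f_k$ is contained in $\{|f| > c\cdot 2^{-k}\}$ for an absolute $c$, to which Chebyshev still applies. A secondary point requiring care is that each $f_k$ must lie in $C^\infty_{0,c}(M)$ — compactly supported with zero mean — which is automatic from $\mathrm{supp}\, f_k \subset \mathrm{supp}\, f$ once we note the zero-mean normalization is about the decomposition inside $C^\infty_{0,c}$, handled exactly as in the proof of Theorem~\ref{AuxilaryTheorem}; since $u$ is merely non-constant (not necessarily compactly supported or zero-mean), the differences $\Phi_{i,+}^*u - \Phi_{i,-}^*u$ already lie in $C^\infty_{0,c}(M)$ whenever $\Phi_{i,\pm}$ agree outside a compact set, which the construction arranges.
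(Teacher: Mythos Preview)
Your overall strategy---decompose $f$ by level sets of $|f|$, then feed each piece to Theorem~\ref{AuxilaryTheorem} and sum the resulting weights---is exactly the paper's approach (packaged there as Proposition~\ref{PropositionL1-Splitting}). But your step~3 contains a genuine gap. You claim that iterating ``peel off one dyadic level, halve the sup-norm, repeat'' yields a geometric series $\sum_j O(c(u))\cdot 2^{-j}$; it does not. At stage $j$ the remainder $r_j$ has $\|r_j\|_\infty\le 2^{-j}$ but its $L^1$ norm has \emph{not} decreased, so by Chebyshev the peeled piece $g_j$ has support volume up to $2^{j+1}\|r_j\|_{L^1}\le 2^{j+1}$, and hence weight $\|g_j\|_\infty\cdot c(u)\cdot(\mathrm{Vol}+1)\approx 2^{-j}\cdot c(u)\cdot 2^{j+1}=O(c(u))$, constant in $j$. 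The series is $\sum_j O(c(u))$, which diverges. Equivalently: your step~2 observation ``each dyadic piece costs $O(c(u))$'' is correct but, by itself, sums to infinity, and the iteration in step~3 does nothing to improve it.

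The repair is already latent in your step~2---you just discarded information by applying Chebyshev piecewise. Rather than bounding $\mathrm{Vol}(A_k)\le 2^{k+1}$ for each $k$, keep the aggregate bound $\sum_k 2^{-k-1}\mathrm{Vol}(A_k)\le\|f\|_{L^1}\le 1$, which gives
\[
\sum_{k\ge 0}\|f_k\|_\infty\bigl(\mathrm{Vol}(\mathrm{supp}\,f_k)+1\bigr)\ \lesssim\ \sum_k 2^{-k}\mathrm{Vol}(A_k)+\sum_k 2^{-k}\ \le\ 4,
\]
so the \emph{total} weight over all dyadic levels is $O(c(u))$. To get a finite representation, truncate at the first $m$ with $\sum_{k\ge m}\mathrm{Vol}(A_k)<1$ (this exists since $\mathrm{Vol}(\mathrm{supp}\,f)<\infty$) and treat the tail as a single piece of sup-norm $\le 1$ and support volume $<1$. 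This is precisely the paper's argument; its only refinement is to replace the rigid dyadic scale by an adaptive sequence $a_0>a_1>\cdots$ with each slice of volume $\le 1$, which streamlines the estimate $\sum a_i\le 4$. One more correction: your $f_k$'s do not have zero mean, so Theorem~\ref{AuxilaryTheorem} does not apply directly; the paper subtracts $h\cdot\int f_k\,\omega^n$ for a fixed bump $h$ with $\int h\,\omega^n=1$ supported in $M\setminus\mathrm{supp}\,f$ (here is where infinite volume is used), which enlarges each support by a fixed amount and is why the ``$+1$'' in the weight bound must survive the summation.
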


\begin{cor}\label{Corollary:upper_bound}
    Let $\|\cdot\|$ be a $\mathrm{Ham}_c(M,\omega)$-invariant norm on $C^{\infty}_c(M)$. 
    Then there exists a constant $C>0$ such that for every $f\in C^{\infty}_c(M)$,
    \[
        \|f\|\leq C\cdot\bigl(\|f\|_{L^{\infty}} + \|f\|_{L^1}\bigr).
    \]
\end{cor}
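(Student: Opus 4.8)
The plan is to deduce Corollary~\ref{Corollary:upper_bound} directly from Theorem~\ref{MainTheorem}, the only point requiring a little care being the choice of the auxiliary function $u$. Since the statement concerns an \emph{arbitrary} $\mathrm{Ham}_c(M,\omega)$-invariant norm $\|\cdot\|$ on $C^{\infty}_c(M)$, we are free to apply Theorem~\ref{MainTheorem} to whichever non-constant function is most convenient, and the convenient choice is one with compact support: I would fix once and for all a non-zero $u\in C^{\infty}_c(M)$. Such functions exist on any manifold, and on a connected open $M$ a non-zero compactly supported function is automatically non-constant, since it must vanish near infinity. Let $C=C(u)$ be the constant provided by Theorem~\ref{MainTheorem} for this $u$; note also that $\|u\|<\infty$ because $u$ lies in the domain $C^{\infty}_c(M)$ of the norm.

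The reason for insisting that $u$ be compactly supported is that then $\Phi^{*}u=u\circ\Phi^{-1}\in C^{\infty}_c(M)$ for every $\Phi\in\mathrm{Ham}_c(M,\omega)$, so each such pullback lies in the domain of $\|\cdot\|$ and, by invariance, satisfies $\|\Phi^{*}u\|=\|u\|$. Consequently, for any $\Phi_{+},\Phi_{-}\in\mathrm{Ham}_c(M,\omega)$ the basic building block $v:=\Phi_{+}^{*}u-\Phi_{-}^{*}u$ belongs to $C^{\infty}_c(M)$, and the triangle inequality gives the \emph{uniform} bound $\|v\|\le\|\Phi_{+}^{*}u\|+\|\Phi_{-}^{*}u\|=2\|u\|$, valid regardless of the $\Phi_{\pm}$.

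Now the corollary follows by normalization. For a non-zero $f\in C^{\infty}_c(M)$ set $\lambda:=\|f\|_{L^{\infty}}+\|f\|_{L^1}>0$; then $\lambda^{-1}f$ satisfies $\|\lambda^{-1}f\|_{L^{\infty}}+\|\lambda^{-1}f\|_{L^1}=1$, so Theorem~\ref{MainTheorem} yields reals $c_1,\dots,c_{\ell}$ with $\sum_i|c_i|\le C$ and Hamiltonian diffeomorphisms $\Phi_{i,\pm}\in\mathrm{Ham}_c(M,\omega)$ such that $\lambda^{-1}f=\sum_{i=1}^{\ell}c_i\,(\Phi_{i,+}^{*}u-\Phi_{i,-}^{*}u)$. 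Applying $\|\cdot\|$ and using homogeneity and the triangle inequality together with the bound of the previous paragraph,
\[
    \|f\|=\lambda\,\Bigl\|\sum_{i=1}^{\ell}c_i(\Phi_{i,+}^{*}u-\Phi_{i,-}^{*}u)\Bigr\|\le \lambda\sum_{i=1}^{\ell}|c_i|\cdot 2\|u\|\le 2\|u\|\,C(u)\cdot\bigl(\|f\|_{L^{\infty}}+\|f\|_{L^1}\bigr),
\]
while the case $f=0$ is trivial. Thus the statement holds with the constant $2\|u\|\,C(u)$, which depends on $M$ and on the norm $\|\cdot\|$ but not on $f$. (If $M$ happens to have finite volume one argues identically, invoking Corollary~\ref{FiniteVolumeCor} in place of Theorem~\ref{MainTheorem} and observing that $\|\cdot\|_{L^1}\le\mathrm{Vol}(M)\,\|\cdot\|_{L^{\infty}}$ renders the $L^1$ term harmless.)

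I do not expect a genuine obstacle here: all the real content is carried by Theorem~\ref{MainTheorem}. The one subtlety to be alert to is that a \emph{general} non-constant $u$ need not be bounded and $\Phi^{*}u$ need not be compactly supported, so the individual summands $\Phi_{i,\pm}^{*}u$ may fall outside the domain of the norm; rewriting $\Phi_{i,+}^{*}u-\Phi_{i,-}^{*}u=\Phi_{i,+}^{*}\bigl(u-\theta_i^{*}u\bigr)$ with $\theta_i:=\Phi_{i,+}^{-1}\circ\Phi_{i,-}\in\mathrm{Ham}_c(M,\omega)$ only reduces the matter to bounding $\sup_{\theta\in\mathrm{Ham}_c(M,\omega)}\|u-\theta^{*}u\|$, which is not obviously finite. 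Choosing $u$ compactly supported from the outset circumvents this entirely, and this is legitimate precisely because the corollary allows us to pick $u$.
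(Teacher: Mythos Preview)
Your overall strategy is exactly the intended one and matches the paper's implicit argument, but there is one genuine oversight: Theorem~\ref{MainTheorem} (and likewise Corollary~\ref{FiniteVolumeCor}) is stated only for $f\in C^{\infty}_{0,c}(M)$, i.e.\ for functions with \emph{zero mean}, whereas Corollary~\ref{Corollary:upper_bound} concerns arbitrary $f\in C^{\infty}_c(M)$. Indeed, the representation $f=\sum_i c_i(\Phi_{i,+}^{*}u-\Phi_{i,-}^{*}u)$ forces $\int_M f\,\omega^n=0$ since each $\Phi_{i,\pm}$ preserves the volume, so you cannot apply the theorem to $\lambda^{-1}f$ as written.

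The repair is easy and fits your framework. Choose your fixed $u\in C^{\infty}_c(M)$ with $\int_M u\,\omega^n\neq 0$, and for a given $f$ set $\alpha:=\bigl(\int_M f\,\omega^n\bigr)\big/\bigl(\int_M u\,\omega^n\bigr)$, so that $f_0:=f-\alpha u\in C^{\infty}_{0,c}(M)$. Since $|\alpha|\le \|f\|_{L^1}/\bigl|\int_M u\,\omega^n\bigr|$, one has $\|f_0\|_{L^\infty}+\|f_0\|_{L^1}\le C'\bigl(\|f\|_{L^\infty}+\|f\|_{L^1}\bigr)$ for a constant $C'$ depending only on $u$. Now run your argument on $f_0$ to obtain $\|f_0\|\le 2\|u\|\,C(u)\,C'\bigl(\|f\|_{L^\infty}+\|f\|_{L^1}\bigr)$, and conclude via $\|f\|\le\|f_0\|+|\alpha|\,\|u\|$. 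With this correction your proof is complete; the remaining discussion about choosing $u$ compactly supported to keep each $\Phi^{*}u$ in the domain of $\|\cdot\|$ is well taken.
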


\noindent It is not surprising that Theorem~\hyperref[TheoremA]{A} plays a central role in proving Theorems~\ref{AuxilaryTheorem} and \ref{MainTheorem}. However, in what follows we will use its local version, stated below.

\begin{thmB}[Theorem 3.1 in Buhovsky, Stokic \cite{BS25}]\label{TheoremB}
    Let $L>0$. There exists $N(n)\in\mathbb{N}$ such that for any $f\in C^{\infty}_0((-L,L)^{2n})$ with $\|f\|_{\infty}\leq L$, one can write
    \[f=\sum_{i=1}^{N(n)}\Phi_{i,+}^*\,x_1-\Phi_{i,-}^*\,x_1,\]
    for some Hamiltonian diffeomorphisms $\mathrm{Ham}_c((-8L,8L)^{2n})$.
\end{thmB}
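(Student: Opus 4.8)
The plan is to reduce the statement to Theorem A applied on a closed symplectic manifold obtained by gluing. First I would observe that the cube $(-L,L)^{2n}$, equipped with the standard symplectic form, embeds symplectically into a closed symplectic manifold $(M,\omega)$ — for instance, rescale so that $(-L,L)^{2n}$ sits inside $(-8L,8L)^{2n}$ with room to spare, and then symplectically embed a slightly larger cube into a torus $\mathbb{T}^{2n}$ or into $\mathbb{CP}^n$ of sufficiently large volume, arranging that the complement of the image of $(-8L,8L)^{2n}$ still has positive volume. The point of the ambient cube $(-8L,8L)^{2n}$ is that all Hamiltonian diffeomorphisms we produce must be supported there, so I need the construction to keep track of supports.

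Next I would take the coordinate function $u = x_1$ on $(-L,L)^{2n}$ and extend it (after composing with a cutoff away from the cube) to a smooth function $\tilde u$ on $M$, then normalize it to $u_0 = \tilde u - \frac{1}{\mathrm{Vol}(M)}\int_M \tilde u\,\omega^n \in C^\infty_0(M)$; since $x_1$ is non-constant on the cube, $u_0$ is a non-zero normalized function, so Theorem A applies and yields $N = N(u_0)$. Then, given $f \in C^\infty_0((-L,L)^{2n})$ with $\|f\|_\infty \le L$, I extend $f$ by zero to a function on $M$; it already has zero mean on $M$ because it had zero mean on the cube, so $f \in C^\infty_0(M)$ with $\|f\|_\infty \le L$. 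After rescaling $u_0$ so that Theorem A's normalization $\|f\|_\infty \le 1$ is met (absorbing the factor $L$ into the ambient scale, which is why the hypothesis is $\|f\|_\infty \le L$ rather than $\le 1$), Theorem A gives $f = \sum_{i=1}^N \Phi_i^* u_0$ for Hamiltonian diffeomorphisms $\Phi_i$ of $(M,\omega)$. Writing $u_0 = \tilde u - \text{const}$ and noting $\sum_i \Phi_i^*(\text{const})$ is itself a constant, I can absorb the constants by pairing: $\Phi_i^* u_0 = \Phi_i^* x_1 + (\text{correction})$, and the corrections, being locally constant where the relevant supports lie, can be written as $\Phi_{i,+}^* x_1 - \Phi_{i,-}^* x_1$ using that $x_1$ takes a range of values on the cube — this is the standard trick of representing a constant $c$ as $\Psi_+^* x_1 - \Psi_-^* x_1$ by translating the cube so that $x_1$ equals $c$ on one translate and $0$ on the other. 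This produces the desired signed-sum representation with a number of terms bounded by a function of $n$ alone.

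The main obstacle I anticipate is controlling the supports: Theorem A produces $\Phi_i \in \mathrm{Ham}(M,\omega)$ with no a priori support constraint, whereas Theorem B demands Hamiltonian diffeomorphisms supported in $(-8L,8L)^{2n}$. To handle this I would not apply the global Theorem A directly but rather re-run its proof (or invoke an already-localized intermediate step from \cite{BS25}) keeping all deformations inside a fixed compact neighborhood of $\mathrm{supp}\,f \cup \mathrm{supp}\,u$; since both supports lie in $(-L,L)^{2n}$ and the construction in \cite{BS25} is built from fragmentation and local moves that can be kept $C^0$-small in the space directions, a careful bookkeeping confines everything to $(-8L,8L)^{2n}$, with the factor $8$ being the concrete slack needed for the finitely many fragmentation steps. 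The auxiliary constants appearing in the $u_0$-versus-$x_1$ comparison are likewise realized by Hamiltonian isotopies that merely translate cubes within $(-8L,8L)^{2n}$, so they respect the support constraint as well. Tracking these support radii through the (finitely many, dimension-dependent) steps is the only genuinely delicate part; everything else is a direct translation of Theorem A.
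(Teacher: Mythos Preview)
The paper does not prove Theorem~B; it is imported from \cite{BS25} (Theorem~3.1 there) and used as a black box in the proof of Theorem~\ref{AuxilaryTheorem}. So there is no proof in the present paper to compare against. More importantly, your reduction runs the logical dependency in \cite{BS25} backwards: there the local Theorem~B is the foundational result, proven by an explicit construction in $\mathbb{R}^{2n}$ exploiting the special form of the linear function $x_1$, and the global Theorem~A is then deduced from it via a Darboux covering and fragmentation argument (exactly the pattern the present paper repeats to deduce Theorem~\ref{AuxilaryTheorem} from Theorem~B). Attempting to derive B from A is therefore circular.

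You correctly isolate the obstruction --- Theorem~A produces $\Phi_i\in\mathrm{Ham}(M,\omega)$ with no support control --- and your proposed fix is to ``re-run its proof or invoke an already-localized intermediate step from \cite{BS25}''. But that already-localized step \emph{is} Theorem~B, and re-running the proof of A while tracking supports amounts to reproving B from scratch. There is also a secondary issue: Theorem~A gives $N=N(u)$ depending on the chosen function, whereas Theorem~B asserts $N=N(n)$ depends only on the dimension; extracting a dimension-only bound from a black-box application of A to some extension $u_0$ of $x_1$ is not automatic (one would need a scaling argument to remove the $L$-dependence, and even then the dependence on the extension and the ambient closed manifold must be eliminated). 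The actual argument in \cite{BS25} bypasses all of this by working directly with $x_1$ on the cube: its Hamiltonian flow is translation, differences $\Phi_+^*x_1-\Phi_-^*x_1$ realize prescribed functions after suitable compositions, and an explicit finite scheme (with the factor $8$ accounting for the room needed by these moves) yields the representation.
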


\subsection{Bi-invariant metrics on $\mathrm{Ham}(M,\omega)$}\label{Section:bi-invariant-metrics}

Let $\|\cdot\|$ be a norm on the Lie algebra $\mathcal{A}$ of the Hamiltonian diffeomorphism group $\mathrm{Ham}(M,\omega)$. We identify $\mathcal{A}$ with the corresponding function space ($C^{\infty}_0(M)$ if $M$ is closed, and $C^{\infty}_c(M)$ if $M$ is open). Assume moreover that $\|\cdot\|$ is invariant under the adjoint action of $\mathrm{Ham}(M,\omega)$ on $\mathcal{A}$, that is,
\[
\| \mathrm{Ad}_{\varphi} f \| = \| f \circ \varphi^{-1} \| = \| f \| \quad \text{for all } \varphi \in \mathrm{Ham}(M,\omega),\, f \in \mathcal{A}.
\]

\noindent Then $\|\cdot\|$ induces a pseudo-norm $\vertiii{\,\cdot\,}$ on $\mathrm{Ham}(M,\omega)$, defined by
\[
\vertiii{\phi} = \inf \left\{ \int_0^1 \| H(t,\cdot) \| \, dt \;\middle|\; H : [0,1]\times M \to \mathbb{R},\ \phi^1_H = \phi \right\}.
\]
This pseudo-norm is conjugation-invariant, i.e.\ $\vertiii{\phi}=\vertiii{\psi\phi\psi^{-1}}$ for all $\phi,\psi\in\mathrm{Ham}(M,\omega)$. It induces a bi-invariant pseudo-metric on $\mathrm{Ham}(M,\omega)$, defined by 
\[\rho(\phi,\psi):=\vertiii{\phi\,\psi^{-1}},\quad\text{for }\phi,\psi\in\mathrm{Ham}(M,\omega).\]
Recall that the bi-invariant condition means that $\rho(\phi,\psi)=\rho(\theta\phi,\theta\psi)=\rho(\phi\theta,\psi\theta)$ for all $\phi,\psi,\theta\in\mathrm{Ham}(M,\omega)$. For $p\in[1,\infty]$, let $\rho_p$ denote the pseudo-metric on $\mathrm{Ham}(M,\omega)$ induced by the $L^p$-norm on its Lie algebra. It has been shown (see \cite{Ho90}, \cite{Vi92}) that $\rho_{\infty}$ defines a genuine metric, known as Hofer's metric, that we sometimes denote $d_{\mathrm{Hofer}}$. In \cite{EP93}, Eliashberg and Polterovich studied the pseudo-metrics $\rho_p$, and proved that for any $1 \leq p < \infty$, the pseudo-metric $\rho_p$ fails to be a genuine metric. Later, Ostrover and Wagner generalized this result as follows:

\begin{thmC}[Ostrover--Wagner \cite{OW05}] \label{ThmOW}
    Let $(M,\omega)$ be a closed symplectic manifold, and let $ \| \cdot \|$ be a $\mathrm{Ham}(M,\omega)$-invariant norm on ${\cal A}\cong C^{\infty}_0(M)$ such that $\| \cdot \| \leq C\| \cdot \|_{\infty}$ for some constant $C$, but the two norms are not equivalent. Then the associated  pseudo-metric on $\mathrm{Ham}(M,\omega)$ vanishes identically.
\end{thmC}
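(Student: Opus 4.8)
The plan is to show that the conjugation‑invariant pseudo‑norm $\vertiii{\cdot}$ induced by $\|\cdot\|$ vanishes identically on $\mathrm{Ham}(M,\omega)$; since the associated pseudo‑metric is $\rho(\phi,\psi)=\vertiii{\phi\psi^{-1}}$, this is exactly the claim. First note that $\vertiii{\cdot}$ is everywhere finite: every $\phi\in\mathrm{Ham}(M,\omega)$ is generated by some Hamiltonian $H$, and $\vertiii{\phi}\le\int_0^1\|H(t,\cdot)\|\,dt\le C\int_0^1\|H(t,\cdot)\|_\infty\,dt<\infty$, so in fact $\vertiii{\phi}\le C\,d_{\mathrm{Hofer}}(\mathrm{id},\phi)$. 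As $\vertiii{\cdot}$ is also subadditive, symmetric, and conjugation‑invariant, the null set $\mathcal N:=\{\phi\in\mathrm{Ham}(M,\omega):\vertiii{\phi}=0\}$ is a normal subgroup, hence — by Banyaga's simplicity theorem for closed symplectic manifolds — equal to $\{\mathrm{id}\}$ or to $\mathrm{Ham}(M,\omega)$. Thus it suffices to exhibit a single $\phi_0\ne\mathrm{id}$ with $\vertiii{\phi_0}=0$; I will take $\phi_0$ supported in a small, hence displaceable, ball $B$, and it is enough to show $\vertiii{\phi_0}<\varepsilon$ for every $\varepsilon>0$. (Note that by the corollary of Theorem~\hyperref[TheoremA]{A} the hypothesis $\|\cdot\|\le C\|\cdot\|_\infty$ is automatic, so the statement really concerns every $\mathrm{Ham}(M,\omega)$‑invariant norm inequivalent to $\|\cdot\|_\infty$.)

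Next I bring in the inequivalence. Since $\|\cdot\|\le C\|\cdot\|_\infty$ while the two norms are not equivalent, the $\|\cdot\|$‑unit ball of $C^\infty_0(M)$ is unbounded in $\|\cdot\|_\infty$; choosing $u_k\in C^\infty_0(M)$ with $\|u_k\|_\infty=1$ and $\|u_k\|\to 0$ (each $u_k$ is non‑constant, being nonzero of zero mean on the connected $M$), we obtain nontrivial Hamiltonian diffeomorphisms $\phi^1_{u_k}$ with $\vertiii{\phi^1_{u_k}}\le\|u_k\|\to 0$, and — transporting them by $\mathrm{Ham}(M,\omega)$‑invariance — arbitrarily $\vertiii{\cdot}$‑cheap Hamiltonian diffeomorphisms adapted to any prescribed region of $M$. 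The construction then assembles the fixed diffeomorphism $\phi_0$ out of finitely many such cheap pieces.

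The construction is the one of Eliashberg--Polterovich \cite{EP93} (carried out for $\|\cdot\|=\|\cdot\|_{L^p}$, $p<\infty$) and of Ostrover--Wagner \cite{OW05} in general: distribute copies of $\phi_0$ along a long chain of small balls obtained by iterating a displacement of $B$, and combine this with the commutator estimate $\vertiii{[\alpha,\beta]}\le 2\min(\vertiii{\alpha},\vertiii{\beta})$ and a telescoping of the transporting maps, so as to realize a conjugate of $\phi_0$ as a commutator $[\alpha,\beta]$ in which $\vertiii{\beta}$ is made arbitrarily small, which forces $\vertiii{\phi_0}=0$. I expect the main obstacle to be precisely this step — the only non‑formal one: the naive infinite \emph{swindle} writing $\phi_0=[\alpha,\beta]$ outright needs infinitely many pairwise disjoint displaced copies of $B$ and so fails on a finite‑volume manifold, and one must instead run a finite, carefully balanced version in which the $\|\cdot\|$‑cost of the transporting diffeomorphisms is kept below $\varepsilon$; it is exactly this last estimate that uses the failure of equivalence of $\|\cdot\|$ with $\|\cdot\|_\infty$ — equivalently, the sublinear behaviour of $\|\cdot\|$ on highly fragmented Hamiltonians that is witnessed by the sequence $u_k$. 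Granting the construction, $\vertiii{\phi_0}=0$, so $\mathcal N\ne\{\mathrm{id}\}$; by simplicity $\mathcal N=\mathrm{Ham}(M,\omega)$, and therefore $\rho\equiv 0$.
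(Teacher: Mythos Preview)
The paper does not prove Theorem~C itself---it is quoted from \cite{OW05}---but the Appendix proves the open-manifold analogue (Theorem~\ref{Theorem:Lower_Bound_Open_Case}) by what the paper calls ``the same steps'', so that is the argument to compare against. It is not a commutator swindle. The Ostrover--Wagner route is: (i)~extend $\|\cdot\|$ to a semi-norm $\|\cdot\|'$ on $L^\infty_c(M)$ invariant under measure-preserving bijections (Theorem~\ref{TheoremOW:extension}); (ii)~combine an averaging inequality (Lemma~\ref{LemmaOW:partition}) with the inequivalence hypothesis to deduce that $\|\mathbf{1}_U\|'\to 0$ as $\mathrm{Vol}(U)\to 0$, hence that \emph{every} Hamiltonian with small-volume support is $\|\cdot\|$-small (Claim~\ref{Claim:support_to_0_norm_to_0}); (iii)~displace a small ball $B$ by a Hamiltonian cut off to a thin collar around the moving boundary sphere $\phi^t_H(\partial B)$, which still displaces $B$ but now has arbitrarily small $\|\cdot\|$-cost; (iv)~conclude $e(B)=0$ and invoke the Eliashberg--Polterovich displacement-energy criterion (Theorem~\ref{Theorem:EP_displacement_positive}) to obtain degeneracy, hence $\rho\equiv 0$ by simplicity of $\mathrm{Ham}(M,\omega)$.

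Your proposal leaves the substantive step undone. You correctly reduce, via Banyaga, to producing a single $\phi_0\ne\mathrm{id}$ with $\vertiii{\phi_0}=0$, and you correctly extract from inequivalence a sequence $u_k$ with $\|u_k\|_\infty=1$, $\|u_k\|\to 0$; but the bridge between these two facts is the entire content of the theorem, and you only gesture at it (``Granting the construction\dots''). Moreover, the mechanism you attribute to \cite{OW05}---a commutator $[\alpha,\beta]$ with $\vertiii{\beta}$ small via telescoped transports along a chain of balls---is not what they do, and your sketch does not explain how an uncontrolled cheap diffeomorphism $\phi^1_{u_k}$ (which need not move any prescribed set anywhere in particular) becomes a cheap \emph{transporting} map $\beta$ sending one specified ball to another. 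The analytical input you are missing is exactly step~(ii): once one knows that \emph{all} small-support Hamiltonians are $\|\cdot\|$-cheap, one can build whatever displacing or transporting maps one needs; that lemma is obtained via the extension and averaging machinery of \cite{OW05}, not by a commutator argument.
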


\noindent The theorem of Ostrover and Wagner extends to the case of open symplectic manifolds in the following way:

\begin{thm}\label{Theorem:Lower_Bound_Open_Case}
    Let $(M,\omega)$ be an open symplectic manifold, and let $\|\cdot\|$ be a $\mathrm{Ham}(M,\omega)$-invariant norm on $C^{\infty}_c(M)$. Assume there is no constant $c>0$ such that $\|F\|\geq c\,\|F\|_{\infty}$ for all $F\in C^{\infty}_{0,c}(M)$. Then the induced pseudo-metric $\rho$ on $\mathrm{Ham}(M,\omega)$ is degenerate.
\end{thm}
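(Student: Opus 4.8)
The plan is to adapt the Ostrover--Wagner argument (Theorem C) to the open setting, where the main new subtlety is that $C^\infty_c(M)$ is not finitely generated under the $\mathrm{Ham}$-action in a uniform way — but this is exactly what Theorem~\ref{AuxilaryTheorem} (and its Corollary~\ref{Corollary:upper_bound}) circumvents. First I would observe that, since $\|\cdot\|$ is a $\mathrm{Ham}_c(M,\omega)$-invariant norm on $C^\infty_c(M)$, Corollary~\ref{Corollary:upper_bound} gives a constant $C_0$ with $\|F\|\le C_0(\|F\|_{L^\infty}+\|F\|_{L^1})$ for all $F$. Combined with the hypothesis that $\|\cdot\|$ is \emph{not} bounded below by any multiple of $\|\cdot\|_\infty$, I can select a sequence $F_k\in C^\infty_{0,c}(M)$ with $\|F_k\|_\infty=1$ but $\|F_k\|\to 0$; rescaling does not help here, so instead I will exploit that along this sequence the $\|\cdot\|$-norm is arbitrarily small while the $L^\infty$-norm stays pinned at $1$.

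Next, the key reduction: to show $\rho$ is degenerate it suffices to show that some fixed nontrivial element $\phi_0\in\mathrm{Ham}_c(M,\omega)$ has $\vertiii{\phi_0}=0$ (then bi-invariance and the triangle inequality spread degeneracy, and in fact one shows $\rho\equiv 0$). Take $\phi_0=\phi^1_{H_0}$ for a compactly supported autonomous Hamiltonian $H_0$ with $\|H_0\|_\infty=1$, supported in a Darboux ball $B$. I would then use a standard "commutator trick'': for any $\psi\in\mathrm{Ham}_c(M,\omega)$, the commutator $[\phi^1_{H},\psi]$ is generated by the Hamiltonian $H - H\circ\psi^{-t}$ (suitably reparametrized), so its $\vertiii{\cdot}$-length is at most $\int_0^1\|H(t,\cdot)-H(t,\cdot)\circ\psi_t^{-1}\|\,dt$. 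The aim is to write $\phi_0$ itself, or a conjugate-and-product of copies of $\phi_0$, as a product of commutators of the form $[\phi^1_{H_k},\psi_k]$ where $H_k$ generates a Hamiltonian built from the small-norm functions $F_k$, so that the total $\vertiii{\cdot}$-length is controlled by $\sum\|F_k\|\to 0$.

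More concretely, I expect to run the following construction. Using displacement: pick disjoint Darboux balls $B_0,B_1,B_2,\dots$ (possible since $M$ is open, hence $\mathrm{Vol}(M)=\infty$ is \emph{not} needed — only that infinitely many disjoint balls fit, which holds on any open manifold) and a Hamiltonian $\psi$ that acts as a shift moving $B_0\to B_1\to B_2\to\cdots$. Transport $\phi_0$ by conjugation to live on $B_0\sqcup B_1\sqcup\cdots\sqcup B_m$; a telescoping/commutator identity of Eliashberg--Polterovich--Ostrover--Wagner type then expresses $\phi_0$ (up to a controlled error) as a single commutator $[\Psi,\Theta]$ where $\Psi$ is generated by a Hamiltonian whose $\|\cdot\|$-norm is at most $\|F_k\|$ for a chosen large $k$, by replacing the building-block Hamiltonian of norm $1$ with a rescaled copy of $F_k$ — legitimate because $F_k$ also has $\|F_k\|_\infty=1$ so it still generates the required geometric displacement after an appropriate affine normalization and diffeomorphism. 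Letting $k\to\infty$ forces $\vertiii{\phi_0}=0$.

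The main obstacle, and where I would spend the most care, is the last point: ensuring that a low-$\|\cdot\|$-norm function $F_k$ with $\|F_k\|_\infty=1$ can genuinely substitute for the "height-$1$ bump'' in the commutator construction while keeping the geometric effect (the required displacements/shifts) intact and keeping the error terms in the telescoping identity small in $\|\cdot\|$. This requires choosing the $F_k$ not arbitrarily but with prescribed shape on the relevant balls — e.g.\ so that $F_k$ restricted to a sub-ball is a standard generating function — and then invoking Corollary~\ref{Corollary:upper_bound} to bound the error terms (which are $L^\infty$- and $L^1$-small because they are supported where $F_k$ and its conjugate agree). Verifying that such $F_k$ exist within the "bad'' sequence, or that one may post-compose the bad sequence by $\mathrm{Ham}$-elements (which does not change $\|\cdot\|$) to arrange the shape, is the crux; once that is in place, the rest is the by-now-standard Ostrover--Wagner vanishing mechanism.
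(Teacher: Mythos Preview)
Your proposal has a genuine gap at the step you yourself flag as the ``crux''. Given only the hypothesis, you obtain functions $F_k$ with $\|F_k\|_\infty=1$ and $\|F_k\|\to 0$, but you have \emph{no} control over their shape: $F_k$ may be nearly constant on a huge set with vanishingly small gradient, so its Hamiltonian flow need not displace anything. The sentence ``legitimate because $F_k$ also has $\|F_k\|_\infty=1$ so it still generates the required geometric displacement after an appropriate affine normalization and diffeomorphism'' is not correct: $L^\infty$-norm alone does not guarantee displacement, and the only shape-changing operations that preserve $\|\cdot\|$ are pullbacks by elements of $\mathrm{Ham}_c$, which cannot alter the level-set structure of $F_k$. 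Hence the substitution of $F_k$ for a standard bump in a commutator/telescoping identity cannot be justified.

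The paper resolves this by a different mechanism. It first extends $\|\cdot\|$ to a semi-norm $\|\cdot\|'$ on $L^\infty_c(M)$ invariant under measure-preserving bijections (this is where Corollary~\ref{Corollary:upper_bound} enters), and proves an averaging inequality $\|\langle F\rangle_S\mathbf{1}_S\|'\le\|F\|'$. From a single bad $F$ with $\|F\|_\infty=1$ and $\|F\|<\varepsilon$ one then deduces $\|\mathbf{1}_U\|'\lesssim\varepsilon$ for \emph{any} bounded $U$ of sufficiently small volume, and by Abel summation that \emph{every} bounded function with small-volume support has small $\|\cdot\|$. This upgrades the uncontrolled bad sequence to a controlled statement. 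The degeneracy then follows without commutators: one displaces a small ball $B$ by a Hamiltonian $H$, cuts $H$ off to a thin tube around the moving sphere $\phi^t_H(\partial B)$, and observes that the cut-off Hamiltonian still displaces $B$ but now has arbitrarily small support volume, hence arbitrarily small $\|\cdot\|$. This shows $e(B)=0$ and, via the Eliashberg--Polterovich criterion, that $\rho$ is degenerate. The missing idea in your sketch is precisely this passage from ``some $F$ has small norm'' to ``all small-support $F$ have small norm''; once you have it, the direct displacement-energy argument is simpler and avoids the shaping problem entirely.
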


\noindent The proof of this result follows the same steps as in Theorem~\hyperref[thmOW]{C} and is presented in the Appendix. 
Theorem~\ref{MainTheorem}, specifically Corollary~\ref{Corollary:upper_bound}, asserts that $\|\cdot\| \le C(\|\cdot\|_{\infty} + \|\cdot\|_{L^1})$. 
The following theorem shows that, even for the largest invariant norm on $C^{\infty}_c(M)$, the induced norm on $\mathrm{Ham}(M,\omega)$ coincides with Hofer's norm when restricted to $\mathrm{ker}(\mathrm{Cal})$.

\begin{thm}\label{Theorem:Upper_Bound_Infinite_volume}
    Let $(M,\omega)$ be a connected, exact symplectic manifold. Let $\vertiii{\,\cdot\,}$ be a conjugation invariant norm on $\mathrm{Ham}(M,\omega)$ induced by a $\mathrm{Ham}(M,\omega)$-invariant norm $\|\cdot\|$, defined as
    \[\|\cdot\|:=\|\cdot\|_{\infty}+\|\cdot\|_{L^1}.\]
    Then, $\vertiii{\phi}=\|\phi\|_{\mathrm{Hofer}}$ for all $\phi\in\mathrm{ker}(\mathrm{Cal})$. 
\end{thm}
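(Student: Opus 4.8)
The inequality $\vertiii{\phi}\ge\|\phi\|_{\mathrm{Hofer}}$ holds for \emph{every} $\phi$ and is immediate: since $\|\cdot\|=\|\cdot\|_{\infty}+\|\cdot\|_{L^1}$ dominates $\|\cdot\|_{\infty}$ pointwise in time, any Hamiltonian $H$ with $\phi^1_H=\phi$ satisfies $\int_0^1\|H_t\|\,dt\ge\int_0^1\|H_t\|_{\infty}\,dt\ge\|\phi\|_{\mathrm{Hofer}}$, and the infimum over such $H$ gives the claim. The whole content is therefore the reverse bound $\vertiii{\phi}\le\|\phi\|_{\mathrm{Hofer}}$ for $\phi\in\ker(\mathrm{Cal})$, and the plan is to exhibit, for each $\delta>0$, a Hamiltonian isotopy realizing $\phi$ (up to $\mathrm{Ham}$-conjugation) with $L^{\infty}$-length within $\delta$ of $\|\phi\|_{\mathrm{Hofer}}$ and with $L^1$-length at most $\delta$.

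Concretely, fix $\phi\in\ker(\mathrm{Cal})$ and a compactly supported $H$ with $\phi^1_H=\phi$ and $\int_0^1\|H_t\|_{\infty}\,dt<\|\phi\|_{\mathrm{Hofer}}+\delta$. Since both $\vertiii{\,\cdot\,}$ and $\|\cdot\|_{\mathrm{Hofer}}$ are invariant under conjugation by elements of $\mathrm{Ham}_c(M,\omega)$, it is enough to produce some $\psi\in\mathrm{Ham}_c(M,\omega)$ and a Hamiltonian $G$ (on a possibly reparametrized time interval) with $\phi^1_G=\psi\phi\psi^{-1}$, $\int\|G_t\|_{\infty}\,dt\le\|\phi\|_{\mathrm{Hofer}}+O(\delta)$ and $\int\|G_t\|_{L^1}\,dt\le\delta$: then $\vertiii{\phi}=\vertiii{\psi\phi\psi^{-1}}\le\int\bigl(\|G_t\|_{\infty}+\|G_t\|_{L^1}\bigr)\,dt\le\|\phi\|_{\mathrm{Hofer}}+O(\delta)$, and letting $\delta\to0$ finishes the proof. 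The restriction to $\ker(\mathrm{Cal})$ is forced by this reduction: any generating Hamiltonian obeys $\int\|G_t\|_{L^1}\,dt\ge\int_0^1\bigl|\int_M G_t\,\omega^n\bigr|\,dt\ge|\mathrm{Cal}(\phi)|$, so the $L^1$-length can be made small only when $\mathrm{Cal}(\phi)=0$.

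The construction of $G$ is where exactness enters — both because it makes $\mathrm{Cal}$, hence $\ker(\mathrm{Cal})$, meaningful, and because the Liouville structure provides room to compress supports. Writing $\omega=d\lambda$ and letting $\theta^s$ be the flow of the Liouville field $Z$ ($\iota_Z\omega=\lambda$, so $(\theta^s)^*\omega=e^s\omega$), a short computation shows that $\theta^{-s}\circ\phi^t_H\circ\theta^s$ is again a Hamiltonian isotopy, generated by $H^{(s)}_t:=e^{-s}\bigl(H_t\circ\theta^s\bigr)$, supported in the contracted set of volume $e^{-ns}\,\mathrm{vol}(\mathrm{supp}\,H_t)$, with $\int\|H^{(s)}_t\|_{\infty}\,dt=e^{-s}\int\|H_t\|_{\infty}\,dt$ and $\int\|H^{(s)}_t\|_{L^1}\,dt=e^{-(n+1)s}\int\|H_t\|_{L^1}\,dt$; thus conjugation by $\theta^s$ drives both parts of the length to zero, but it replaces $\phi$ by $\theta^{-s}\phi\theta^s$, and $\theta^s\notin\mathrm{Ham}$, so this alone bounds nothing. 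The plan is to interpolate while staying inside $\mathrm{Ham}_c$: first fragment the flow $\phi^t_H$ in time into $C^0$-small steps $\psi_j=\phi^{j/N}_H\circ(\phi^{(j-1)/N}_H)^{-1}$, which costs nothing in $L^{\infty}$-length since $\sum_j\int_{(j-1)/N}^{j/N}\|H_t\|_{\infty}\,dt=\int_0^1\|H_t\|_{\infty}\,dt$; then realize the fragmented flow by a single ``traveling-wave'' Hamiltonian that, at every instant, is supported on a set of volume $\le\delta$ lying in a thin Liouville channel, the localized moves tracking the velocity field $X_{H_t}$ so that the net effect reproduces $\phi$ up to a correction supported on a set of volume $O(\delta)$ and small in $L^{\infty}$ (hence of negligible $\vertiii{\cdot}$). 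Here the hypothesis $\mathrm{Cal}(\phi)=0$ is used precisely to guarantee that this localized process can be \emph{closed up} at time $1$ — that the conveyor returns with no residual flux — since any leftover flux would have to be paid for in $L^1$-length (impossible by the Calabi bound) or in extra $L^{\infty}$-length.

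The main obstacle will be the $L^{\infty}$ bookkeeping of this localization. The naive way to make a Hamiltonian small-support — cutting $H_t$ off by a partition of unity subordinate to a cover of $\mathrm{supp}\,H_t$ by small balls — multiplies the $L^{\infty}$-length by the number of balls, i.e.\ by a factor $\sim\mathrm{vol}(\mathrm{supp}\,H)$, and destroys the estimate; the same factor reappears if one runs the conveyor naively (processing one parcel of phase space at a time yields a Wasserstein-type cost, generally far larger than $\|\phi\|_{\mathrm{Hofer}}$). Avoiding this requires organizing the localized moves so that the $L^{\infty}$-costs \emph{telescope}: each parcel of phase space must be ``handed off'' along the channel so that the total energy spent on it sums back to its contribution to $\int_0^1\|H_t\|_{\infty}\,dt$ rather than to the length of the channel. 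Making this telescoping precise — together with the Lie--Trotter-type error control for the small-volume corrections, where the local flexibility result (Theorem~\hyperref[TheoremB]{B}) applied in Darboux charts of the channel is a convenient tool for rewriting the residual terms — and verifying that $\mathrm{Cal}(\phi)=0$ indeed lets the conveyor close with only $O(\delta)$ leftover energy, is the technical heart of the argument.
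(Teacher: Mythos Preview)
Your setup is correct: the inequality $\vertiii{\phi}\ge\|\phi\|_{\mathrm{Hofer}}$ is trivial, the content is the reverse bound, and you correctly identify both the goal (a generating Hamiltonian with small $L^1$-length and $L^\infty$-length close to Hofer) and the obstacle (na\"ive localization inflates the $L^\infty$-cost by a volume factor). But the proposal stops exactly at the hard step. The Liouville-flow discussion is a dead end, as you yourself note; and the ``traveling-wave / conveyor with telescoping'' mechanism is never actually constructed---you write that ``making this telescoping precise \ldots\ is the technical heart of the argument'' and leave it there. Nothing in the outline explains how to hand off parcels along a channel so that the $L^\infty$-costs sum to $\int_0^1\|H_t\|_\infty\,dt$ rather than to the channel length, and there is no indication of what role Theorem~B would play. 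As written this is a statement of intent, not a proof.

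The paper's route is structurally different and avoids the localization problem you are worried about. Rather than seek a \emph{single} Hamiltonian with both small $L^1$-length and near-optimal $L^\infty$-length, it first reduces (at no $L^\infty$-cost, using infinite volume) to Hamiltonians with zero spatial mean at every time, then time-discretizes to the autonomous case. For an autonomous zero-mean $H$ it writes $\phi^1_H=\phi^1_K\cdot\bigl((\phi^1_K)^{-1}\phi^1_H\bigr)$, where $K=\sum_i c_i\,(F_0\circ\phi_i^{-1})$ is a ``step-function'' approximation built from translates of a single bump $F_0$. The factor $(\phi^1_K)^{-1}\phi^1_H$ is generated by $H-K$, which has small $L^1$-norm and $L^\infty$-norm at most $\|H\|_\infty+\varepsilon$; the factor $\phi^1_K$ is shown, via Sikorav's trick, to be Hofer-close to the time-$1$ map of $(\sum_i c_i)F_0$, whose norm is tiny precisely because $\sum_i c_i\approx\int_M H\,\omega^n=0$. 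This is where the Calabi/zero-mean hypothesis actually enters, and the ``telescoping'' you are looking for is exactly the iterated commutator identity in Sikorav's argument---successively conjugating neighboring bumps onto one another at Hofer cost $O(\delta)$ per layer---not a traveling-wave construction. Exactness is used only to make $\mathrm{Cal}$ meaningful; the Liouville primitive plays no role in the estimate.
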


\subsection{Exact symplectic manifolds and Calabi homomorphism}\label{Section:exact-symp-mfd}

In this section we assume that the symplectic form $\omega$ is exact. Then one can define
\[\mathrm{Cal}:\mathrm{Ham}(M,\omega)\to\mathbb{R},\quad\mathrm{Cal}(\phi)=\int_{0}^{1}H(t,\cdot)\,\omega^n\,dt,\]
where $H:[0,1]\times M\to\mathbb{R}$ is a Hamiltonian function whose time--$1$ flow generates $\phi$. Since $\omega$ is exact, the value of $\mathrm{Cal}(\phi)$ is well-defined, i.e., it does not depend on the choice of Hamiltonian function $H$ with $\phi^1_H = \phi$.

Eliashberg and Polterovich showed (see Theorem~1.4.A in \cite{EP93}) that any continuous, bi-invariant, intrinsic pseudo-metric $\rho$ on an exact symplectic manifold $\mathrm{Ham}(M,\omega)$ that is not a genuine metric satisfies:
\[
\rho(\phi,\mathrm{Id}) = \mu \cdot \big\lvert \mathrm{Cal}(\phi) \big\rvert, \quad \text{for some } \mu > 0 \text{ and all } \phi \in \mathrm{Ham}(M,\omega),
\]
which classifies all degenerate pseudo-metrics. On the other hand, if one considers linear combination of $L^{\infty}$ and $L^{p}$-norms, namely $\|\cdot\|=\|\cdot\|_{\infty}+\sum_{p=1}^m\mu_p\|\cdot\|_{L^p}$, where $\mu_p\geq 0$, the induced metric $\rho$ satisfies (see Section~4.3.A. in \cite{EP93}):

\[\rho(\phi,\mathrm{Id})\geq\|\phi\|_{\mathrm{Hofer}}+\mu \cdot \big\lvert \mathrm{Cal}(\phi) \big\rvert.\]

\begin{question}[Eliashberg–Polterovich, Question~4.3.C in \cite{EP93}]
Does there exist a bi-invariant intrinsic metric on $\mathrm{Ham}(M,\omega)$ that is not equivalent (or even different) from $d_{\mathrm{Hofer}} + \mu \cdot |\mathrm{Cal}|$, where $\mu \geq 0$?
\end{question}

\noindent We answer the question by classifying, up to equivalence, all bi-invariant metrics on the group of Hamiltonian diffeomorphisms of exact symplectic manifolds:

\begin{thm}[Classification of bi-invariant pseudo-metrics on $\mathrm{Ham}(M,\omega)$]\label{Theorem:Classification_of_bi-invariant_metrics}
Let $(M,\omega)$ be a connected exact symplectic manifold, and let $\rho$ be an intrinsic bi-invariant pseudo-metric on $\mathrm{Ham}(M,\omega)$ induced by a $\mathrm{Ham}(M,\omega)$-invariant norm $\|\cdot\|$ on its Lie algebra. Then one of the following holds:
\begin{enumerate}
    \item \textbf{Degenerate case:} $\rho(\phi,\psi) = \mu\,|\mathrm{Cal}(\phi\circ\psi^{-1})|$ for some $\mu \ge 0$.
    \item \textbf{Non-degenerate case:} There exist constants $0 < c < C$ such that either
    \[
        c\,d_{\mathrm{Hofer}}(\phi,\psi) \le \rho(\phi,\psi) \le C\,d_{\mathrm{Hofer}}(\phi,\psi),
    \]
    or
    \[
        c \,\big(d_{\mathrm{Hofer}}(\phi,\psi) + |\mathrm{Cal}(\phi\circ\psi^{-1})|\big) 
        \le \rho(\phi,\psi) 
        \le C \,\big(d_{\mathrm{Hofer}}(\phi,\psi) + |\mathrm{Cal}(\phi\circ\psi^{-1})|\big).
    \]
\end{enumerate}
Thus, $\rho$ is either identically zero or, up to equivalence, coincides with one of $|\mathrm{Cal}|$, $d_{\mathrm{Hofer}}$, or $d_{\mathrm{Hofer}} + |\mathrm{Cal}|$.
\end{thm}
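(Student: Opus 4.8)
The plan is to combine the two boundary inputs already assembled in the paper — the upper bound from Corollary~\ref{Corollary:upper_bound} together with Theorem~\ref{Theorem:Upper_Bound_Infinite_volume}, and the lower bound from Theorem~\ref{Theorem:Lower_Bound_Open_Case} together with the Ostrover--Wagner dichotomy (Theorem~\hyperref[ThmOW]{C}) and the Eliashberg--Polterovich classification of degenerate metrics. First I would split into the two cases according to whether $\|\cdot\|$ is equivalent to a norm bounded below by a multiple of $\|\cdot\|_\infty$. Concretely, set $c_0 := \inf\{\|F\| : F\in C^\infty_{0,c}(M),\ \|F\|_\infty = 1\}$.

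\textbf{Case 1: $c_0 = 0$ (degenerate case).} Here the hypothesis of Theorem~\ref{Theorem:Lower_Bound_Open_Case} is met, so the induced pseudo-metric $\rho$ is degenerate. Being also continuous (it is dominated by $C(\|\cdot\|_\infty+\|\cdot\|_{L^1})$ by Corollary~\ref{Corollary:upper_bound}, hence by Hofer's metric plus a Calabi term, so it is finite and continuous on one-parameter subgroups), bi-invariant, and intrinsic, it falls under the Eliashberg--Polterovich theorem (Theorem~1.4.A in \cite{EP93}): $\rho(\phi,\mathrm{Id}) = \mu\cdot|\mathrm{Cal}(\phi)|$ for some $\mu\ge 0$. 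Bi-invariance then gives $\rho(\phi,\psi) = \mu|\mathrm{Cal}(\phi\circ\psi^{-1})|$, which is conclusion~(1). The one technical point to verify is that $\rho$ satisfies the continuity/intrinsic hypotheses of \cite{EP93}; this should follow formally from the construction of $\rho$ as a path pseudo-metric and the domination $\rho\le C(d_{\mathrm{Hofer}}+|\mathrm{Cal}|)$.

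\textbf{Case 2: $c_0 > 0$ (non-degenerate case).} Then $\|F\|\ge c_0\|F\|_\infty$ for all $F$. I would show this forces a two-sided comparison of $\rho$ with either $d_{\mathrm{Hofer}}$ or $d_{\mathrm{Hofer}}+|\mathrm{Cal}|$ as follows. From $\|\cdot\|\ge c_0\|\cdot\|_\infty$ one gets $\rho\ge c_0\,\rho_\infty = c_0\,d_{\mathrm{Hofer}}$ directly from the definition of the induced pseudo-norm. For the upper bound, Corollary~\ref{Corollary:upper_bound} gives $\|\cdot\|\le C(\|\cdot\|_\infty+\|\cdot\|_{L^1})$, and Theorem~\ref{Theorem:Upper_Bound_Infinite_volume} identifies the pseudo-norm induced by $\|\cdot\|_\infty+\|\cdot\|_{L^1}$ with $d_{\mathrm{Hofer}}$ on $\ker(\mathrm{Cal})$; combined with the obvious bound $\vertiii{\phi}\le C'(d_{\mathrm{Hofer}}(\phi,\mathrm{Id})+|\mathrm{Cal}(\phi)|)$ for general $\phi$, we obtain $\rho\le C''(d_{\mathrm{Hofer}}+|\mathrm{Cal}|)$. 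It remains to decide, within this non-degenerate regime, whether $\rho$ also bounds $|\mathrm{Cal}|$ from above (giving the first alternative) or not (giving the second). This dichotomy I would extract by looking at the behaviour of $\|\cdot\|$ on functions of fixed $L^\infty$-norm but large $L^1$-norm, equivalently at whether $\sup\{|\mathrm{Cal}(\phi)| : \rho(\phi,\mathrm{Id})\le 1\}$ is finite: if it is finite, $\rho\ge c|\mathrm{Cal}|$ and hence $\rho\simeq d_{\mathrm{Hofer}}$; if infinite, one produces a sequence of Hamiltonians realizing large Calabi at bounded $\rho$-length, which by a rescaling/concatenation argument upgrades to $\rho \gtrsim |\mathrm{Cal}|$ plus the already-known $\rho\gtrsim d_{\mathrm{Hofer}}$, giving $\rho\simeq d_{\mathrm{Hofer}}+|\mathrm{Cal}|$.

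\textbf{Main obstacle.} The delicate step is Case~2's internal dichotomy: proving that if $\rho$ does not dominate $|\mathrm{Cal}|$ then it is genuinely equivalent to $d_{\mathrm{Hofer}}$ (and not something strictly between). This requires showing that an invariant norm $\|\cdot\|$ with $c_0>0$ which fails to control $\|\cdot\|_{L^1}$ still induces, on the group level, exactly Hofer's metric up to constants — i.e.\ that the $L^1$-contribution is invisible to $\rho$ precisely when it is "small" relative to $L^\infty$ in an appropriate averaged sense. I expect this to hinge on a Calabi-splitting argument: decompose an arbitrary $\phi$ as a product of a Hamiltonian in $\ker(\mathrm{Cal})$ and a "Calabi-carrying" piece supported in a small ball, estimate the first factor via Theorem~\ref{Theorem:Upper_Bound_Infinite_volume}, and control the second factor's $\rho$-length by its $L^\infty$-norm, which can be made comparable to $|\mathrm{Cal}(\phi)|$ by spreading the mass — so that $\rho(\phi,\mathrm{Id})\lesssim d_{\mathrm{Hofer}}(\phi,\mathrm{Id}) + |\mathrm{Cal}(\phi)|$ always, while the lower bound on $|\mathrm{Cal}|$ holds or fails according to a single numerical invariant of $\|\cdot\|$. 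Making this splitting quantitative and uniform is where the real work lies; the rest is bookkeeping with the cited theorems.
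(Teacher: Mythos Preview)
Your overall case split (whether $\|\cdot\|$ is bounded below by a multiple of $\|\cdot\|_\infty$) and your use of Theorem~\ref{Theorem:Lower_Bound_Open_Case}, Corollary~\ref{Corollary:upper_bound}, Theorem~\ref{Theorem:Upper_Bound_Infinite_volume}, and the Eliashberg--Polterovich classification all match the paper. However, there are two genuine problems in Case~2.

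First, your internal dichotomy has the conclusions swapped. If $\sup\{|\mathrm{Cal}(\phi)|:\rho(\phi,\mathrm{Id})\le 1\}$ is \emph{finite}, then $\rho\gtrsim|\mathrm{Cal}|$, and together with $\rho\gtrsim d_{\mathrm{Hofer}}$ and the upper bound this gives $\rho\simeq d_{\mathrm{Hofer}}+|\mathrm{Cal}|$, not $\rho\simeq d_{\mathrm{Hofer}}$. If the supremum is \emph{infinite}, then $\rho$ certainly cannot dominate $|\mathrm{Cal}|$, so you cannot ``upgrade to $\rho\gtrsim|\mathrm{Cal}|$'' as you write; rather you must prove $\rho\lesssim d_{\mathrm{Hofer}}$. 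Your ``Main obstacle'' paragraph has the correct implication, so this may be a slip, but it muddles the logic badly.

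Second, and more substantively, your group-level dichotomy does not by itself give you what you need for the upper bound in the hard case. Suppose $\rho$ does not dominate $|\mathrm{Cal}|$. To run your Calabi-splitting and conclude $\rho(\phi,\mathrm{Id})\lesssim d_{\mathrm{Hofer}}(\phi,\mathrm{Id})$, you must produce spread-out bump functions $h_k$ with $\int h_k\,\omega^n=1$ and $\|h_k\|\to 0$; this is a Lie-algebra statement, and it does \emph{not} follow formally from the group-level hypothesis. The paper resolves this by reversing the logic: it defines the single numerical invariant $b:=\liminf_k\|h_k\|$ directly on the Lie algebra, proves (via the Calabi-splitting you sketch) that $\vertiii{\varphi}\le C\,\|\varphi\|_{\mathrm{Hofer}}+b\,|\mathrm{Cal}(\varphi)|$ \emph{always}, and then proves the matching lower bound $\vertiii{\varphi}\ge b\,|\mathrm{Cal}(\varphi)|$ \emph{always} using the Ostrover--Wagner extension of $\|\cdot\|$ to $L^\infty_c(M)$ and their averaging lemma (Theorem~\ref{TheoremOW:extension} and Lemma~\ref{LemmaOW:partition}). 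The same constant $b$ appears on both sides, so the dichotomy $b=0$ versus $b>0$ is immediate. You never invoke this OW machinery, and your ``obvious bound $\vertiii{\phi}\le C'(d_{\mathrm{Hofer}}+|\mathrm{Cal}|)$ for general $\phi$'' is precisely the $b$-dependent upper bound in disguise --- it is not obvious, and getting the sharp constant $b$ rather than some unrelated $C'$ is exactly what makes the two halves of the dichotomy fit together.
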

\section{Proof of Theorem \ref{AuxilaryTheorem}}

\begin{claim}\label{ClaimDarbouxChartForU}
    There exists a Darboux chart $(V,\varphi)$ and $\widetilde{L}>0$ with $[-\widetilde{L},\widetilde{L}]^{2n}\subset\varphi(V)$ and
    \[(u\circ\varphi^{-1})|_{[-\widetilde{L},\widetilde{L}]^{2n}}=x_1+c,\quad\text{for some constant }c\in\mathbb{R}.\]
\end{claim}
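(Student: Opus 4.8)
The plan is to exploit the fact that $u$ is non-constant to find a point where $du$ does not vanish, and then straighten $u$ to a linear coordinate function by a symplectic change of coordinates. Since $u\in C^{\infty}(M)$ is non-constant, there exists a point $p\in M$ with $d u_p\neq 0$. Work in a Darboux chart around $p$, so that we may assume $M$ is an open subset of $(\mathbb{R}^{2n},\omega_{\mathrm{std}})$ containing the origin, with $p=0$ and $u(0)=c$. The key point is that $X:=u-c$ is a smooth function vanishing at the origin with $dX_0\neq 0$; by the Darboux--Weinstein type normal form for a single function (equivalently, the local normal form for a submersion made symplectic), there is a symplectomorphism defined near the origin taking $X$ to the linear coordinate $x_1$.

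The key steps, in order, are as follows. First, pick $p$ with $du_p\neq 0$ and a Darboux chart $\psi\colon V_0\to \psi(V_0)\subset\mathbb{R}^{2n}$ with $\psi(p)=0$; replacing $u$ by $u\circ\psi^{-1}$ and subtracting the constant $c=u(p)$, reduce to showing: a smooth function $g$ on a neighborhood of $0\in(\mathbb{R}^{2n},\omega_{\mathrm{std}})$ with $g(0)=0$, $dg_0\neq 0$, can be brought to the form $g\circ\Psi^{-1}=x_1$ near $0$ by a symplectomorphism $\Psi$ fixing $0$. Second, after a linear symplectic change of coordinates (the linear symplectic group acts transitively on nonzero covectors, since any nonzero covector can be completed to a symplectic basis of $(\mathbb{R}^{2n})^*$), we may assume $dg_0=dx_1$. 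Third, construct the straightening: the Hamiltonian vector field $X_g$ of $g$ is nonvanishing near $0$ and transverse to the hypersurface $\{g=0\}$ at the origin is not quite what we want; instead, use that $\{x_1=0,\ g=\text{const}\}$ foliates, and apply the standard moment-map / normal-form argument — concretely, one builds $\Psi$ by flowing along a suitable vector field so that $g$ becomes $x_1$, using that the pair $(x_1,g)$ has $\{x_1,g\}$ small and $g$ agrees with $x_1$ to first order. A clean way: by a parametric Darboux argument applied to the coisotropic reduction of $\{g=0\}$, or simply note that $(g, x_2,\dots, x_{2n})$ need not be symplectic, so instead invoke the following standard fact: any smooth $g$ with $dg_0\neq 0$ admits, near $0$, a symplectomorphism $\Psi$ with $g\circ\Psi = x_1$ — this is the "symplectic straightening of a function," provable by an inductive Moser-type argument on the functions $g_t = (1-t)x_1 + t g$, solving $\dot\Psi_t = Y_t\circ\Psi_t$ with $Y_t$ chosen Hamiltonian-like so that $\tfrac{d}{dt}(g_t\circ\Psi_t)=0$; solvability requires $\{$something$\}$ which holds because $g_t$ all share the same nonvanishing differential at $0$. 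Fourth, once $u\circ\varphi^{-1}=x_1+c$ holds on some neighborhood of $0$, shrink to a closed cube: choose $\widetilde L>0$ small enough that $[-\widetilde L,\widetilde L]^{2n}$ lies inside that neighborhood and inside $\varphi(V)$, and set $V$ to be the corresponding open set; this gives the claim.

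The main obstacle I expect is the symplectic straightening step (Step 3): making precise that a single function with nonvanishing differential can be taken to a linear coordinate by a \emph{symplectic} (not merely smooth) change of variables. The non-symplectic version is trivial (implicit function theorem), but symplecticity is a real constraint, so one must either cite the appropriate normal-form lemma (a relative Darboux theorem, or the statement that a hypersurface germ together with a transverse function can be symplectically normalized), or run a careful Moser homotopy argument. I would handle it by the Moser trick: set $g_t=(1-t)x_1+tg$, note $dg_t\big|_0=dx_1\neq 0$ for all $t$ so each $\{g_t=s\}$ is a smooth hypersurface near $0$, seek a time-dependent vector field $Y_t$ with $dg_t(Y_t) = x_1 - g$ and $Y_t$ generating symplectomorphisms — the obstruction to choosing $Y_t$ Hamiltonian is cohomological and vanishes locally, so $Y_t$ exists on a possibly smaller neighborhood of $0$; integrating $Y_t$ for $t\in[0,1]$ yields the desired $\Psi$. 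Everything else (picking $p$, the linear normalization of the differential, shrinking to a cube) is routine.
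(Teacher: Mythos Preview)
Your argument is correct and is the standard route; the paper itself gives no proof here but simply refers to Lemma~4.2 of \cite{BS25}, which carries out precisely this symplectic straightening of a function with nonvanishing differential. The only loose spot is Step~3: rather than the vague ``the obstruction to choosing $Y_t$ Hamiltonian is cohomological and vanishes locally,'' note concretely that since $X_{g_t}$ is nonvanishing near the origin, the transport equation $X_{g_t}(H_t)=g-x_1$ is solvable by integrating along the flow lines of $X_{g_t}$ from a transversal, and then $Y_t:=X_{H_t}$ is Hamiltonian with $dg_t(Y_t)=x_1-g$ as required. Alternatively you can skip the Moser homotopy and cite the Carath\'eodory--Jacobi--Lie theorem (a function with $dg_p\neq 0$ extends to the first coordinate of a Darboux chart near $p$), which is the off-the-shelf statement usually invoked here.
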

\begin{proof}
    See the proof of Lemma 4.2 in \cite{BS25}.
\end{proof}

\noindent
Define the following objects:
\begin{enumerate}
    \item Open subset $U := \varphi^{-1}([-\widetilde{L}/8, \widetilde{L}/8])\subset M$.
    \item Choose $L > 0$ sufficiently small so that $Q_L := \varphi^{-1}([-L, L]^{2n}) \subset U$.
    \item Let $h : M \to [0,1]$ be a smooth bump function with $\mathrm{supp}\,h \subset U \setminus Q_L$ and $\int_M h\,\omega^n = 1$.
    \item Let $\mathcal{C}$ be a finite set of colors with $|\mathcal{C}| = 100^n$.
\end{enumerate}

\begin{prop}\label{Proposition:Darboux_Balls_Cover}
    There exists a finite family of open Darboux balls $\mathcal{B}$ that can be split into $100^n$ disjoint families $\mathcal{B}=\bigsqcup_{c\in\mathcal{C}}\mathcal{B}_c$ such that the following is satisfied
    \begin{enumerate}
        \item $\mathrm{supp}\,f\subset U\cup\big(\bigcup_{B\in\mathcal{B}}B\big)$,
        \item for all $B\in\mathcal{B}$ we have $B\cap (Q_L\cup(\mathrm{supp}\,h))=\emptyset$, and no ball $B\in\mathcal{B}$ satisfies $B\subset U$,
        \item for every $c\in\mathcal{C}$, all the balls in $\mathcal{B}_c$ are pairwise disjoint, and no two balls $B,B'\in\mathcal{B}_c$ have a non-empty intersection with a ball from $\mathcal{B}$,
        \item for every $B\in\mathcal{B}$ there exists a sequence $\{B_i\}_{i=0}^{n_B}\subset\mathcal{B}$ such that $B_0=B$, $B_{n_B}\cap U\neq\emptyset$, and  $B_i\cap B_{i+1}\neq\emptyset$ for all $0\leq i< n_B$.
    \end{enumerate}
\end{prop}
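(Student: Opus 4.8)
The plan is to build the family $\mathcal{B}$ as a connected "chain-like" cover that links $\mathrm{supp}\, f$ to the region $U$, while staying away from the core cube $Q_L$ and the bump $h$, and then to color it with a bounded number of colors. First I would cover $\mathrm{supp}\, f \setminus U$ by small Darboux balls: choose at each point $p \in \overline{\mathrm{supp}\, f} \setminus (U \setminus \overline{Q_L \cup \mathrm{supp}\, h})$ a Darboux ball of radius small enough that it misses $Q_L \cup \mathrm{supp}\, h$ (possible since that set is closed and disjoint from $\mathrm{supp}\, f$ outside $U$), and also small enough that it is not contained in $U$ — if $M \setminus U$ has nonempty interior near the ball this is automatic, and if the ball happens to sit inside $U$ we simply discard it since $\mathrm{supp}\, f \cap U$ is already absorbed by the term $U$ in item (1). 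By compactness of $\overline{\mathrm{supp}\, f}$ extract a finite subcover; this gives a finite family of balls satisfying (1) and (2). For (4), I would use connectedness of $M$: enlarge the family by adding, for each ball $B$ that is not yet chain-connected to $U$, a finite chain of Darboux balls along a path in $M$ from a point of $B$ to a point of $\overline{U}$, again choosing all radii small enough to avoid $Q_L \cup \mathrm{supp}\, h$ and to not be contained in $U$; the path exists since $M$ is connected, and finitely many balls cover its (compact) image. After finitely many such augmentations every ball is chain-connected to $U$, giving (4), and the added balls do not spoil (1) or (2).

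For the coloring in item (3), I would shrink every ball in the family slightly (this preserves (1), (2), (4) if the shrinking is small enough relative to the overlaps) so that each ball $B$ has a fixed Euclidean radius comparable to a common scale in its Darboux chart, and then invoke a standard Vitali-type / bounded-multiplicity argument: a family of round balls in which each ball has bounded ratio of radii to its neighbors has chromatic number (for the "intersection graph") bounded by a dimensional constant, here $100^n$. Concretely, pass to a maximal subfamily $\mathcal{B}_{c_1}$ of pairwise-disjoint balls that are also disjoint from... — wait, item (3) is stronger: it wants each color class $\mathcal{B}_c$ to consist of balls pairwise disjoint \emph{and} each disjoint from all other balls of $\mathcal{B}$. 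So instead I would first select a maximal subfamily of pairwise disjoint balls, remove it together with every ball meeting it, and iterate; the doubling/packing property of Darboux balls of comparable size ensures each ball meets at most $100^n - 1$ others (after shrinking to enforce comparable sizes), so at most $100^n$ rounds suffice, and the $c$-th round produces $\mathcal{B}_c$. The balls not selected in any round — those meeting a selected ball — are exactly the "non-colored" balls that still belong to $\mathcal{B}$ but not to any $\mathcal{B}_c$; re-reading item (3), "no two balls $B,B'\in\mathcal{B}_c$ have a non-empty intersection with a ball from $\mathcal{B}$" forces $\mathcal{B}_c$ to be a \emph{separated} subfamily, so in fact the decomposition $\mathcal{B}=\bigsqcup_{c}\mathcal{B}_c$ together with (3) means every ball lies in some separated color class, which is exactly the statement that the intersection graph is $100^n$-colorable. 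Hence I would phrase the final step as: the intersection graph of $\mathcal{B}$ (after the comparable-size reduction) has maximum degree $< 100^n$, so by greedy coloring it is properly $100^n$-colorable, and a proper coloring is precisely a decomposition into separated subfamilies as required.

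I expect the main obstacle to be the bookkeeping that makes the three requirements on the radii simultaneously consistent: a ball must be (a) small enough to avoid $Q_L \cup \mathrm{supp}\, h$, (b) \emph{not} contained in $U$, and (c) part of a family with a uniform packing/doubling bound so that the $100^n$-coloring works. Requirement (b) is the subtle one, because naively shrinking balls to get the packing bound could push them inside $U$; the fix is to only ever place balls centered at points of $M \setminus U$ (for the chain-connecting balls, route the connecting path through $\partial U$ and then immediately outside, keeping centers outside $U$), so that a ball centered outside $U$ with radius less than its distance to $\overline{U}$... no — it must still \emph{reach} $U$ for (4). The honest resolution, which I would carry out, is to distinguish the finitely many balls that are "anchored" to $U$ (those with $B \cap U \neq \emptyset$, which need not avoid being contained in $U$-neighborhoods but must simply not be $\subset U$, achievable by taking them centered near $\partial U$) from the interior chain balls, and to verify each of (1)–(4) is stable under the small shrinking used for the coloring step. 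None of this is deep, but it is where the care lies; the connectedness input (for (4)) and the dimensional packing constant $100^n$ (for (3)) are the two genuinely load-bearing facts.
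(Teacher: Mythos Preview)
Your final reading of condition (3) is wrong, and this is the genuine gap. The clause ``no two balls $B,B'\in\mathcal{B}_c$ have a non-empty intersection with a ball from $\mathcal{B}$'' means that no third ball $B''\in\mathcal{B}$ may meet both $B$ and $B'$; together with pairwise disjointness this says that any two same-colored vertices lie at distance $\ge 3$ in the intersection graph $\mathcal{G}$. What is required is a proper coloring of the \emph{square} $\mathcal{G}^2$, not of $\mathcal{G}$. A degree bound $d$ on $\mathcal{G}$ yields only $d^2+1$ colors for $\mathcal{G}^2$ via greedy coloring, so your conclusion ``maximum degree $<100^n$, hence $100^n$-colorable'' is off by a square. (The distance-$2$ separation is what the paper actually uses downstream: when a cube in some $B\in\mathcal{B}_c$ is transported to $U$ along a chain of balls in $\mathcal{B}$, one needs every intermediate ball to be disjoint from all \emph{other} balls of $\mathcal{B}_c$.)

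Your construction strategy also makes life harder than necessary. The ``shrink to comparable radii, then color'' step is in tension with (1) and (4), as you note, and the patches you sketch (center balls outside $U$, distinguish anchored balls, route chains through $\partial U$) do not obviously survive a uniform shrinking. The paper avoids all this bookkeeping with one move: fix a bounded connected open $\Omega\supset U\cup\mathrm{supp}\,f$, take a \emph{maximal $\varepsilon$-separated set} $\Gamma_\varepsilon\subset\overline{\Omega}\setminus U$, and let $\mathcal{B}$ be the radius-$\varepsilon$ balls centered at $\Gamma_\varepsilon$. Maximality forces the balls to cover $\overline{\Omega}\setminus U$, giving (1); centers in $\overline{\Omega}\setminus U$ with $\varepsilon$ small gives (2); every ball has the \emph{same} radius, so a straight volume comparison shows each ball meets at most $5^{2n}-1$ others with no shrinking needed; and connectedness of $\Omega$ makes $\mathcal{G}$ connected, giving (4) without any auxiliary chains. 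The distance-$2$ coloring then uses at most $(5^{2n}-1)^2+1$ colors.
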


\begin{proof}
    Choose a Riemannian metric compatible with the symplectic form on $M$. Let $\Omega\subset M$ be a bounded connected open set with $U\cup\mathrm{supp}\,f\subset\Omega$. For $\varepsilon>0$, let $\Gamma_{\varepsilon}\subset\overline{\Omega}\setminus U$ be a \emph{maximal} $\varepsilon$-separated set (i.e., distinct points are at least $\varepsilon$ apart). Define
    \[
    \mathcal{B}_{\varepsilon}:=\{B(v,\varepsilon)\mid   v\in\Gamma_{\varepsilon}\},
    \]
    where $B(v,\varepsilon)$ is the ball of radius $\varepsilon$. For $\varepsilon$ sufficiently small, these balls become Darboux balls. Let us prove that union of sets in $\mathcal{B}_{\varepsilon}$ covers $\overline{\Omega}\setminus U$. Suppose by contradiction that there exists $p\in\overline{\Omega}\setminus U$ with $d(p,\Gamma_{\varepsilon})\geq\varepsilon$. Then $p$ could be added to $\Gamma_{\varepsilon}$, contradicting maximality. Therefore, we have
    \[
    U\cup\mathrm{supp}\,f\subset\Omega\subset U\cup\bigcup_{B\in\mathcal{B}_{\varepsilon}}B.
    \]
    \noindent This construction verifies the first two properties, provided $\varepsilon>0$ is small enough. Let us prove that any $B\in\mathcal{B}_{\varepsilon}$ intersects at most $5^{2n}-1$ other balls in $\mathcal{B}_{\varepsilon}$, provided $\varepsilon>0$ is small enough. Fix $B(p,\varepsilon)\in\mathcal{B}_{\varepsilon}$ with $p\in\Gamma_{\varepsilon}$. Any ball intersecting $B(p,\varepsilon)$ has its center in $B(p,2\varepsilon)$. Consider
    \[
    \mathcal{F}:=\{B(v,\varepsilon/2)\mid v\in\Gamma_{\varepsilon}\cap B(p,2\varepsilon)\}.
    \]
    Since $\Gamma_{\varepsilon}$ is $\varepsilon$-separated, the balls in $\mathcal{F}$ are pairwise disjoint, and they all lie inside the ball $B(p,5\varepsilon/2)$. A volume comparison gives
    \[
    |\mathcal{F}|\leq\left\lfloor\frac{\mathrm{Vol}(B(p,5\varepsilon/2))}{\mathrm{Vol}(B(p,\varepsilon/2))}\right\rfloor\leq 5^{2n},
    \]
    for $\varepsilon>0$ small enough. Thus each ball in $\mathcal{B}_{\varepsilon}$ intersects at most $5^{2n}-1$ others. Consider the graph $\mathcal{G}$ whose vertices are the sets in $\mathcal{B}_{\varepsilon}$, where two vertices are connected by an edge if and only if the corresponding sets have non-empty intersection. It follows that the degree of every vertex in $\mathcal{G}$ is at most $d = 5^{2n} - 1$. Hence, the vertices of $\mathcal{G}$ can be colored with at most $d^2 + 1 < 100^n$ colors, so that no two vertices of the same color are at distance $1$ or $2$ in $\mathcal{G}$. This establishes the second property. Finally, since $\Omega \supset U \cup \mathrm{supp}\,f$ is connected, it follows that $\mathcal{G}$ is connected, which proves the third property.
\end{proof}

\begin{lemma}\label{Lemma:transition_charts_lemma}
Let $\{(U_i,\varphi_i)\}_{i=1}^{m}$ be a finite family of Darboux charts on $M$. Then we can modify each chart $\varphi_i$ to $\varphi'_i$ (by modifying it only on intersections with other charts) so that the family $\{(U_i,\varphi'_i)\}_{i=1}^{m}$ remains a family of Darboux charts and satisfies the following: whenever $U_i \cap U_j \neq \emptyset$, there exists an open subset $B_{ij} \subset U_i \cap U_j$ on which the transition map is the identity, i.e.,
\[
\varphi'_i \circ (\varphi'_j)^{-1}\big|_{B_{ij}} = \mathrm{Id}.
\]
\end{lemma}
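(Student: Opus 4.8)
The plan is to reduce, by a bookkeeping induction, to the case of a single pair of overlapping charts, and then to settle that case by a semi-local symplectic flexibility construction.

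First I would reduce to one pair. We may assume $M$ connected (its components are handled separately, since charts in different components have disjoint domains). Enumerate the pairs $\{i,j\}$ with $U_i\cap U_j\neq\emptyset$ and process them one at a time; at the step for $\{i,j\}$ we change only one of the two charts, say $\varphi_j$, altering it only inside $U_i\cap U_j$, and produce an open ball $B_{ij}\subset U_i\cap U_j$ on which the current $\varphi_i$ and $\varphi_j$ coincide. Only the finitely many balls created at earlier steps involving the index $j$ are endangered, so — since we are free to choose where the new ball and the support of the new modification sit — we keep that support disjoint from the earlier balls, shrinking the latter as needed. This reduces the lemma to: given Darboux charts $(U_1,\varphi_1)$, $(U_2,\varphi_2)$ with $U_1\cap U_2\neq\emptyset$ and a compact $K\subset U_2$, one can modify $\varphi_2$ only on $U_1\cap U_2$ and away from $K$ so that the new $\varphi_2$ coincides with $\varphi_1$ on some nonempty open subset of $(U_1\cap U_2)\setminus K$.

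For the single pair I would pass to $\mathbb{R}^{2n}$ via $\varphi_2$: the task becomes to find a symplectic embedding $\Psi$ of the open set $\varphi_2(U_1\cap U_2)$ (or of $\varphi_2(\bigcup_k (U_2\cap U_k))$, depending on which charts meet $U_2$) that is the identity near the portion of its boundary coming from $\partial U_1$ and near $\varphi_2(K)$, and that agrees with the transition germ $g=\varphi_1\circ\varphi_2^{-1}$ near some interior point $q$; then $\varphi_2':=\Psi\circ\varphi_2$ on the modified region, and $\varphi_2':=\varphi_2$ outside it, is the desired chart. If $U_1\subseteq U_2$ this is trivial — set $\varphi_1':=\varphi_2|_{U_1}$ instead. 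Otherwise $U_1\setminus U_2\neq\emptyset$, and a path in $U_1$ from a point of $U_1\cap U_2$ to a point of $U_1\setminus U_2$ must cross $\partial U_2$ inside $U_1$; this forces $\partial(\varphi_2(U_1\cap U_2))$ to have a ``free'' portion lying on $\partial(\varphi_2(U_2))$, through which the chart's image can be enlarged. I would exploit this by growing a thin symplectic tube. A naive modification supported in a small ball about $\varphi_2^{-1}(q)$ cannot work, since such a modification preserves the chart's image (a symplectic embedding of a ball equal to the identity near the boundary is onto the ball, by a degree argument), whereas $g$ moves $q$ to the possibly far point $\varphi_1(\varphi_2^{-1}(q))$. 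Instead, choose an embedded arc in $\mathbb{R}^{2n}$ from $q$ to $\varphi_1(\varphi_2^{-1}(q))$ avoiding the sets that must be fixed, and build a symplectic isotopy — supported away from those sets — that drags a small ball $B(q,\delta)$ out through the free boundary and along the arc into a small ball around the target, realizing the affine part of $g$ there; a ball admits symplectic embeddings into arbitrarily thin slabs, so there is no volume or non-squeezing obstruction to routing the tube. The residual germ at the target, now fixing that point and with trivial linear part, is the time-one map of a germ of Hamiltonian flow, and is absorbed by cutting off a generating Hamiltonian supported in the small target ball; extending $\Psi$ by the identity and composing with $\varphi_2$ then yields $\varphi_2'$.

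The hard part will be this tube construction: building a symplectically embedded tube that redirects the chart through its free boundary while respecting the degree and volume constraints intrinsic to symplectic maps, and routing it through the prescribed modification region and around the finitely many already-fixed balls. The flexibility inputs — thin-slab embeddings defeating non-squeezing, connectedness of $M$, and the existence of a free boundary — are exactly what make this go; the bookkeeping of the reduction step and the cut-off Hamiltonian corrections are routine by comparison.
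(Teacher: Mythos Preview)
Your approach is substantially more complicated than the paper's, and the complication stems from an obstruction you can sidestep entirely. The paper first arranges (``without loss of generality'') that each transition map $\varphi_{ij}=\varphi_i\circ\varphi_j^{-1}$ \emph{fixes} the chosen point $p_{ij}\in U_i\cap U_j$; this is achieved simply by post-composing each chart map with a global symplectomorphism of $\mathbb{R}^{2n}$ (Hamiltonian diffeomorphisms act $k$-transitively on points, so the finitely many constraints $\varphi_i(p_{ij})=\varphi_j(p_{ij})$ can all be met). Once the transition fixes $p_{ij}$, your degree obstruction evaporates: the germ of $\varphi_{ij}$ at $p_{ij}$ is the time-$1$ map of a Hamiltonian isotopy fixing $p_{ij}$ (linear part via a quadratic Hamiltonian, then Moser for the higher-order terms), and cutting off the generating Hamiltonian yields $\psi_{ij}\in\mathrm{Symp}_c(V_{ij})$ with $\psi_{ij}=\varphi_{ij}$ near $p_{ij}$. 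Setting $\varphi'_j:=\psi_{ij}\circ\varphi_j$ on pairwise disjoint neighbourhoods $V_{ij}$ and $\varphi'_j:=\varphi_j$ elsewhere handles all pairs simultaneously---no induction, no tubes.

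Your tube construction is therefore unnecessary, and as written it is also incomplete. For the modified $\varphi'_2$ (extended by $\varphi_2$ outside $U_1\cap U_2$) to remain an \emph{embedding} of $U_2$, the tube's image must avoid the open, full-dimensional set $\varphi_2(U_2\setminus U_1)$; in particular the endpoint $\varphi_1(\varphi_2^{-1}(q))$ must lie outside it. You have not ensured this, and there is no evident reason it holds for an arbitrary choice of $q$. The existence of a ``free'' boundary portion does not by itself guarantee a suitable arc.

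(The preliminary global post-composition admittedly does not literally respect the lemma's parenthetical ``only on intersections''. The paper's own proof makes no effort to preserve that constraint at the WLOG stage, and the constraint plays no role in the subsequent applications.)
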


\begin{proof}
    For every ordered pair $(i,j)$ with $U_i\cap U_j\neq 0$ pick a point $p_{ij}\in U_i\cap U_j$, such that $p_{ij}\neq p_{kl}$ whenever $(i,j)\neq(k,l)$. Denote $\varphi_{ij}:=\varphi_i\circ\varphi_j^{-1}$. Without loss of generality, we may assume that $\varphi_{ij}(p_{ij})=p_{ij}$.
    \begin{claim}\label{Claim:Moser's_trick}
    For every ordered pair $(i,j)$ with $U_i \cap U_j \neq \emptyset$, and for every open subset $V_{ij} \subset U_i \cap U_j$ containing a point $p_{ij}$, there exists a symplectomorphism 
    \[
        \psi_{ij} \in \mathrm{Symp}_c(V_{ij})
    \] 
    such that $\psi_{ij}$ coincides with $\varphi_{ij}$ on an open neighbourhood of $p_{ij}$.
    \end{claim}

\begin{proof}[Proof of Claim \ref{Claim:Moser's_trick}]
The linear symplectic map $d\varphi_{ij}(p_{ij})$ can be realized as the time--1 flow of a quadratic Hamiltonian vector field, and hence it fixes $p_{ij}$.  
By Moser’s method one then obtains, in a neighborhood of $p_{ij}$, a Hamiltonian isotopy $(\varphi_t)_{t\in[0,1]}$ with $\varphi_0=\mathrm{id}$, $\varphi_1=\varphi_{ij}$, and each $\varphi_t$ fixing $p_{ij}$.
Thus $\varphi_{ij}$ agrees near $p_{ij}$ with the time--1 map of a Hamiltonian flow generated by some $H_t$.  
Finally, multiplying $H_t$ by a cut--off function supported in $V_{ij}$ and equal to $1$ near $p_{ij}$ produces compactly supported Hamiltonians whose time--1 flow $\psi_{ij}$ still fixes $p_{ij}$ and coincides with $\varphi_{ij}$ on a neighborhood of $p_{ij}$.
\end{proof}

\noindent For every ordered pair $(i,j)$, with $U_i \cap U_j \neq \emptyset$, pick a small open subset $V_{ij} \subset U_i \cap U_j$, containing a point $p_{ij}$, such that $V_{ij}\cap V_{kl}=\emptyset$ whenever $(i,j)\neq (k,l)$. Apply the above Claim to get symplectomorphisms $\psi_{ij}$. Finally, define
\[
\varphi'_j(p) =
\begin{cases}
\psi_{ij} \circ \varphi_j(p), & \text{if } U_i \cap U_j \neq \emptyset \text{ and } p \in V_{ij}, \\
\varphi_j(p), & \text{otherwise.}
\end{cases}
\]
\noindent We now verify that on $V_{ij}$ we have
\[
\varphi'_i \circ (\varphi'_j)^{-1} = \varphi_i \circ (\psi_{ij} \circ \varphi_j)^{-1} = \varphi_{ij} \circ \psi_{ij}^{-1}.
\]  
Since $\psi_{ij}$ coincides with $\varphi_{ij}$ on a small open neighbourhood of $p_{ij} \in V_{ij}$, we complete the proof by taking $B_{ij}$ to be an open neighbourhood of $p_{ij}$ where this equality holds.
\end{proof}

\noindent Before we proceed with the proof, we apply the Lemma \ref{Lemma:transition_charts_lemma} to the family of Darboux charts consisting of $U$ and balls $B\in\mathcal{B}$. Now we use a partition of unity to decompose
\[
f = f_U + \sum_{B \in \mathcal{B}} f_B,
\]
where $\mathrm{supp}\,f_U \subset U$, $\|f_U\|_{\infty} \leq 1$, 
and for each $B \in \mathcal{B}$ we have $\mathrm{supp}\,f_B \subset B$ and $\|f_B\|_{\infty} \leq 1$.

\begin{claim}\label{Claim:Cover_by_cubes_in_charts}
    For every pair $(c,\lambda)\in\mathcal{C}\times\mathcal{X}$, where $\mathcal{X}=\{0,1\}^{2n}$, there exists $a>0$, and a finite collection of disjoint open sets $\mathcal{Q}_{c}^{\lambda}$, such that the following holds:
    \begin{enumerate}
        \item $\bigcup_{B\in\mathcal{B}}\mathrm{supp}\,f_{B}\subset \bigcup_{(c,\lambda)\in\mathcal{C}\times\mathcal{X}}\bigsqcup_{Q\in\mathcal{Q}_{c}^{\lambda}}Q$,
        \item $\mathrm{Vol}\big(\bigsqcup_{Q\in\mathcal{Q}_{c}^{\lambda}}Q\big)\leq 2\mathrm{Vol}(\mathrm{supp}\,f)$ for all $(c,\lambda)\in\mathcal{C}\times\mathcal{X}$,
        \item for every $Q\in\mathcal{Q}_{c}^{\lambda}$, we have $Q\subset B\in\mathcal{B}_c$ and inside the Darboux chart $B$ the image of $Q$ has form $v+(-2a/3,2a/3)^{2n}$ for some vector $v\in\mathbb{R}^{2n}$.
    \end{enumerate}
\end{claim}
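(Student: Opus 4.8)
The plan is to build every cube inside the Darboux charts $\varphi_B$ of the balls $B\in\mathcal{B}$ from one two--scale grid, keeping a single parameter $a>0$ free until the very end, where it is fixed as the minimum of finitely many positive numbers, one per ball of $\mathcal{B}$. In the chart $\varphi_B$ I would use the grid of closed cubes $C_k^B:=\varphi_B^{-1}\bigl(2a\,k+[0,2a]^{2n}\bigr)$, $k\in\mathbb{Z}^{2n}$, discarding those not entirely contained in $\varphi_B(B)$; for $a$ small these cells cover $\mathrm{supp}\,f_B$ (a compact subset of the open ball $B$) and overlap only along faces. Inside each cell, and for each $\lambda\in\mathcal{X}=\{0,1\}^{2n}$, I would place one open cube $Q_{k,\lambda}^B$ of side $4a/3$ --- a translate of $(-2a/3,2a/3)^{2n}$ in the chart --- positioned coordinate by coordinate so that $\bigcup_{\lambda\in\mathcal{X}}Q_{k,\lambda}^B\supset C_k^B$. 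This is possible because $4a/3\ge a$: in one coordinate two intervals of length $4a/3$, one reaching slightly below $0$ and the other slightly above $2a$, already cover $[0,2a]$, and one takes products; the tiny overhang past the cell boundary is irrelevant, since for a fixed $\lambda$ the centres of $Q_{k,\lambda}^B$ and $Q_{k',\lambda}^B$ with $k\ne k'$ still differ, in some coordinate, by $2a$ times a nonzero integer, while each cube has half--width $2a/3<a$, so these cubes remain disjoint with a definite gap. Finally I would set
\[
\mathcal{Q}_c^\lambda:=\bigl\{\,Q_{k,\lambda}^B \ :\ B\in\mathcal{B}_c,\ C_k^B\cap\mathrm{supp}\,f_B\ne\emptyset\,\bigr\}.
\]

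Properties (1) and (3) then come out directly. Any point of $\mathrm{supp}\,f_B$ with $B\in\mathcal{B}_c$ lies in some $C_k^B$, hence in some $Q_{k,\lambda}^B\in\mathcal{Q}_c^\lambda$, proving (1). For (3): after shrinking $a$ (finitely many balls), every cell meeting $\mathrm{supp}\,f_B$, and every cube it carries, stays inside $B$, so each $Q\in\mathcal{Q}_c^\lambda$ satisfies $Q\subset B\in\mathcal{B}_c$ and has the prescribed form in the chart; inside a fixed $\mathcal{Q}_c^\lambda$, two cubes from the same ball occupy distinct cells of its chart and two cubes from different balls $B,B'\in\mathcal{B}_c$ are disjoint because $B\cap B'=\emptyset$ by part~3 of Proposition~\ref{Proposition:Darboux_Balls_Cover}. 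Thus $\mathcal{Q}_c^\lambda$ is a finite family of pairwise disjoint open cubes of the required shape.

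The one step that genuinely needs $a$ small is the volume bound (2). Since a Darboux chart preserves the symplectic volume form, $\mathrm{Vol}(Q_{k,\lambda}^B)=(4a/3)^{2n}=(2/3)^{2n}\,\mathrm{Vol}(C_k^B)$, so
\[
\mathrm{Vol}\Bigl(\bigsqcup_{Q\in\mathcal{Q}_c^\lambda}Q\Bigr)=(2/3)^{2n}\sum_{B\in\mathcal{B}_c}\mathrm{Vol}\Bigl(\bigcup\bigl\{\,C_k^B:C_k^B\cap\mathrm{supp}\,f_B\ne\emptyset\,\bigr\}\Bigr).
\]
A cell of side $2a$ meeting $\mathrm{supp}\,f_B$ lies within its diameter $2a\sqrt{2n}$ of $\mathrm{supp}\,f_B$, so, read in the chart, the union above sits in the closed $2a\sqrt{2n}$--neighbourhood of $\mathrm{supp}\,f_B$; as $a\downarrow0$ these neighbourhoods decrease to $\mathrm{supp}\,f_B$, and by continuity of Lebesgue measure (the chart preserving volume) their volume tends to $\mathrm{Vol}(\mathrm{supp}\,f_B)$. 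Choosing $\varepsilon_B>0$ in advance with $\sum_{B\in\mathcal{B}}\varepsilon_B\le\mathrm{Vol}(\mathrm{supp}\,f)$ (assume $f\not\equiv 0$, so $\mathrm{Vol}(\mathrm{supp}\,f)>0$), I would then pick each $a_B$ small enough that for $a\le a_B$ this neighbourhood has volume at most $\mathrm{Vol}(\mathrm{supp}\,f_B)+\varepsilon_B$ and all the containment requirements of (3) hold, and set $a:=\min_{B\in\mathcal{B}}a_B$. Since $\{\mathrm{supp}\,f_B\}_{B\in\mathcal{B}_c}$ are pairwise disjoint (lying in the pairwise disjoint balls of $\mathcal{B}_c$) and $\bigcup_{B\in\mathcal{B}}\mathrm{supp}\,f_B\subset\mathrm{supp}\,f$, this gives
\[
\mathrm{Vol}\Bigl(\bigsqcup_{Q\in\mathcal{Q}_c^\lambda}Q\Bigr)\le(2/3)^{2n}\Bigl(\sum_{B\in\mathcal{B}_c}\mathrm{Vol}(\mathrm{supp}\,f_B)+\sum_{B\in\mathcal{B}}\varepsilon_B\Bigr)\le 2(2/3)^{2n}\,\mathrm{Vol}(\mathrm{supp}\,f)\le 2\,\mathrm{Vol}(\mathrm{supp}\,f),
\]
which is (2).

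I do not expect a real obstacle here; the argument is pure bookkeeping, the only delicate point being the choice of the two scales --- inner cubes of side $4a/3$ large enough ($\ge a$) that the $2^{2n}$ positions $\lambda$ cover a cell, yet small enough ($<2a$) that for a fixed $\lambda$ the cubes from distinct cells stay disjoint with a definite gap --- together with the observation that the global quantity $\mathrm{Vol}(\mathrm{supp}\,f)$ enters only through the collapse of the sum over the pairwise disjoint balls in $\mathcal{B}_c$, so that the resulting packing loss $(2/3)^{2n}<1$ leaves the constant $2$ in (2) with room to spare.
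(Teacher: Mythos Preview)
Your proof is correct and takes essentially the same approach as the paper: in each chart, use $2^{2n}$ interleaved grids of open cubes of side $4a/3$ with spacing $2a$ (the paper phrases this via the shifted lattices $a\lambda+2a\mathbb{Z}^{2n}$ rather than via your ambient cells $C_k^B$), keep only those meeting the support, and shrink $a$ so that the union lies in a small neighbourhood of $\mathrm{supp}\,f_B$. The paper in fact leaves the verification of property~(2) to the reader, whereas you make the volume estimate explicit via the packing factor $(2/3)^{2n}$ and the collapse of the sum over the pairwise disjoint balls in $\mathcal{B}_c$; this is a welcome addition, not a departure.
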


\begin{proof}
    Fix a Darboux ball $B\in\mathcal{B}_c$ with the chart map $\varphi_{B}:B\to\varphi_B(B)\subset\mathbb{R}^{2n}$. Denote $\Omega:=\varphi_B(\mathrm{supp}\,f_B)$ and let $V\subset\varphi_B(B)$ be an open neighbourhood of $\Omega$. Pick a $\delta>0$ small enough so that 
    \[V_{\delta}=\{x\in\mathbb{R}^{2n}\mid d(x,\Omega)\leq\delta\}\subset V.\]
    
    \noindent Let $0<a<\frac{\delta}{2n}$, and for each $\lambda\in\mathcal{X}=\{0,1\}^{2n}$ we define a finite grid $\Gamma^{a}_{\lambda}\subset V_{\delta}$ as
    \[\Gamma^{a}_{\lambda}:=(a\cdot\lambda+2a\mathbb{Z}^{2n})\cap V_{\delta}.\]

    \noindent For each $\lambda\in\mathcal{X}$ we define collection of open cubes
    \[\mathcal{Q}^{B}_{\lambda}:=\{v+(-2a/3,2a/3)^{2n}\mid v\in\Gamma^{a}_{\lambda}\text{ and }\{v+(-2a/3,2a/3)^{2n}\}\cap\Omega\neq\emptyset\}.\]
    
    \noindent Moreover, define $\mathcal{Q}^{B}$ to be the union of all cubes from different collections. We claim that
    
    \begin{equation}\label{Covering_Equation}
    \Omega\subset\bigcup_{Q\in\mathcal{Q}^{B}}Q=\bigcup_{\lambda\in\mathcal{X}}\bigcup_{v\in\Gamma^{a}_{\lambda}}v+(-2a/3,2a/3)^{2n}.
    \end{equation}
    
    \noindent Take a point $p=(p_1,p_2,\ldots,p_{2n})\in \Omega$ and define $q=(a\cdot\lfloor p_1/a\rfloor,\ldots,a\cdot\lfloor p_{2n}/a\rfloor)\in a\cdot\mathbb{Z}^{2n}$. For each $\lambda\in\mathcal{X}=\{0,1\}^{2n}$ define point $q_{\lambda}=q+a\cdot\lambda$. Points $\{q_{\lambda}\mid\lambda\in\mathcal{X}\}$ are corners of a cube of side $a$ and the point $p$ is inside this cube, which implies that $p$ is covered by
    \[\bigcup_{\lambda\in\mathcal{X}}q_{\lambda}+(-2a/3,2a/3)^{2n}.\]
    \noindent To show (\ref{Covering_Equation}) it only remains to prove $\{q_{\lambda}\mid\lambda\in\mathcal{X}\}\subset\Gamma:=\bigcup_{\lambda\in\mathcal{X}}\Gamma^{a}_{\lambda}$, which is equivalent to showing that $\{q_{\lambda}\mid\lambda\in\mathcal{X}\}\subset V_{\delta}$. If $q_{\lambda}\in\Omega\subset V_{\delta}$ we are done, otherwise if $q_{\lambda}\not\in\Omega$ we have $d(q_{\lambda},\partial\Omega)<2n\cdot a<\delta$ which implies $q_{\lambda}\in V_{\delta}$ and proves (\ref{Covering_Equation}). Moreover, note that every cube in $\mathcal{Q}^a$ has its center in $V_{\delta}$ and diameter less than $2n\cdot a<\delta$, hence 
    \[
    \bigcup_{Q\in\mathcal{Q}^a}Q\subset\{x\mid d(x,\Omega)<2\delta\}\subset V\subset\varphi_{B}(B)
    \]
    for sufficiently small $\delta$. Finally, define
    \[\mathcal{Q}_{c}^{\lambda}:=\bigcup_{B\in\mathcal{B}_{c}}\bigcup_{Q\in\mathcal{Q}^{B}_{\lambda}}\varphi_B^{-1}(Q).\]
    One checks that families $\mathcal{Q}^{\lambda}_{c}$ satisfy desired properties.
\end{proof}

\noindent Once again, using the partition of unity, we can write
\[
f = f_U + \sum_{B \in \mathcal{B}} f_B = f_U + \sum_{(c,\lambda) \in \mathcal{C} \times \mathcal{X}} f_{c,\lambda},
\]
where each $f_{c,\lambda} \in C_c^{\infty}(M)$ satisfies 
$\mathrm{supp}\,f_{c,\lambda} \subset \bigcup_{Q \in \mathcal{Q}_c^{\lambda}} Q$, $\|f_{c,\lambda}\|_{\infty} \leq 1$ for all $(c,\lambda) \in \mathcal{C} \times \mathcal{X}$.

\begin{claim}
    Let $N_L:=3\mathrm{Vol}(\mathrm{supp}\,f)/(2L)^{2n}$. For each $(c,\lambda)\in\mathcal{C}\times\mathcal{X}$, the family of cubes $\mathcal{Q}_{c}^{\lambda}$ can be split into $N_L$ disjoint families $\big\{\mathcal{F}^{c,\lambda}_i\big\}_{i=1}^{N_L}$  such that for each $1\leq i\leq N_L$ there exists a Hamiltonian diffeomorphism $\Psi_i\in\mathrm{Ham}_c((M\setminus\mathrm{supp}\,h),\,\omega)$ with
    \[\bigsqcup_{Q\in\mathcal{F}^{c,\lambda}_i}\Psi_i(Q)\subset Q_L.\]
\end{claim}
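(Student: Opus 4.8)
The plan is to separate the combinatorics from the symplectic transport. For the combinatorics, note that in its Darboux chart every $Q\in\mathcal{Q}_c^\lambda$ is a Euclidean cube of side $\tfrac{4a}{3}$, hence has $\omega^n$-volume $\bigl(\tfrac{4a}{3}\bigr)^{2n}$; since these cubes are pairwise disjoint, property~(2) of Claim~\ref{Claim:Cover_by_cubes_in_charts} gives $|\mathcal{Q}_c^\lambda|\le 2\,\mathrm{Vol}(\mathrm{supp}\,f)\,\bigl(\tfrac{4a}{3}\bigr)^{-2n}$. On the other hand $Q_L=\varphi^{-1}([-L,L]^{2n})$ contains a regular grid of $M_a$ pairwise disjoint open cubes of side $\tfrac{4a}{3}$ lying in the interior of $Q_L$, and an elementary count gives $M_a\bigl(\tfrac{4a}{3}\bigr)^{2n}\to(2L)^{2n}$ as $a\to0$. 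Shrinking $a$ we may thus assume $M_a\ge\tfrac23(2L)^{2n}\bigl(\tfrac{4a}{3}\bigr)^{-2n}$, whence $|\mathcal{Q}_c^\lambda|\le N_L\,M_a$; partition $\mathcal{Q}_c^\lambda$ into at most $N_L$ subfamilies $\mathcal{F}_i^{c,\lambda}$ of cardinality $\le M_a$, and for each $i$ fix an injection assigning to every $Q\in\mathcal{F}_i^{c,\lambda}$ a distinct grid cube (``slot'') of $Q_L$. It then suffices, for each $i$, to produce $\Psi_i\in\mathrm{Ham}_c(M\setminus\mathrm{supp}\,h,\omega)$ carrying each $Q\in\mathcal{F}_i^{c,\lambda}$ onto its slot.

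I would build $\Psi_i=\psi_r\circ\cdots\circ\psi_1$ by transporting the cubes of $\mathcal{F}_i^{c,\lambda}=\{Q^{(1)},\dots,Q^{(r)}\}$ one at a time, with $\psi_s$ carrying $Q^{(s)}$ to its slot $T^{(s)}\subset Q_L$ and supported in $W_s:=M\setminus\bigl(\mathrm{supp}\,h\cup \overline{T^{(1)}}\cup\cdots\cup \overline{T^{(s-1)}}\cup \overline{Q^{(s+1)}}\cup\cdots\cup \overline{Q^{(r)}}\bigr)$, an open set which is connected (it is $M$, of dimension $2n\ge2$, with finitely many pairwise disjoint closed balls and cubes deleted, $h$ being chosen with ball support) and which contains $Q^{(s)}$ and $T^{(s)}$. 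To define $\psi_s$, take a chain of balls $B=B_0,B_1,\dots,B_{n_B}$ with $B\ni Q^{(s)}$ and $B_{n_B}\cap U\neq\emptyset$, as in Proposition~\ref{Proposition:Darboux_Balls_Cover}(4); since Lemma~\ref{Lemma:transition_charts_lemma} was applied to $\{U\}\cup\mathcal{B}$ (with the base points of the overlap regions chosen near the boundary spheres of the balls), consecutive charts agree on open overlap regions which, for $a$ small, contain a cube of side $\tfrac{4a}{3}$. Then slide $Q^{(s)}$ through the chain and into $Q_L$: inside each ball by cut-off Hamiltonian translations, handing it off between consecutive balls through the identity-overlap regions, so that it stays a standard grid cube throughout and lands undistorted on the standard slot $T^{(s)}$. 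Since $\psi_{s'}$ for $s'>s$ is supported away from $T^{(s)}$, this yields $\Psi_i(Q^{(s)})=T^{(s)}$ for all $s$, and disjointness of slots gives $\bigsqcup_{Q\in\mathcal{F}_i^{c,\lambda}}\Psi_i(Q)=\bigsqcup_s T^{(s)}\subset Q_L$.

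The main work — and the step I expect to be fussiest — is to carry out these slides \emph{inside} $W_s$, i.e.\ to keep clear of the finitely many obstacle cubes and of $\mathrm{supp}\,h$. The mechanism: inside any ball of the chain, the obstacle cubes present are among $Q^{(s+1)},\dots,Q^{(r)}$, they sit on the fixed grid of that ball (cubes of side $\tfrac{4a}{3}$, spacing $2a$), and $\mathrm{supp}\,f$ is compactly contained in the ball, so for $a$ small the unoccupied grid cells form a connected ``shell'' near the boundary sphere through which $Q^{(s)}$ can be routed — using the elementary fact that sliding a grid cube one step along a grid axis meets only the two grid cells at its endpoints in that row, so a route confined to unoccupied cells collides with nothing — and one enters and leaves each ball through the identity-overlap regions, which were placed inside these shells; inside $U$ one argues similarly, steering around the fixed set $\mathrm{supp}\,h$ and the already-filled slots, and a free slot always exists because $|\mathcal{F}_i^{c,\lambda}|\le M_a$. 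All the conditions ``$a$ small enough'' are finite in number and hence jointly achievable; the one genuinely structural input is the connectedness of $M\setminus\mathrm{supp}\,h$ and of each $W_s$, which is what makes the transport paths available, while the role of Lemma~\ref{Lemma:transition_charts_lemma} is precisely to let the cube pass between charts without distortion, so that the tight volume budget $|\mathcal{Q}_c^\lambda|\le N_L\,M_a$ is actually usable.
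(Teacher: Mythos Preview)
Your counting is fine and in fact slightly cleaner than the paper's: you compare the number of cubes in $\mathcal{Q}_c^\lambda$ to the number $M_a$ of available grid slots in $Q_L$ and partition by cardinality, whereas the paper partitions by total volume. Both yield the bound $N_L$.

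The divergence, and the gap, is in the transport. The paper does \emph{not} navigate around obstacle cubes in intermediate balls. Instead it builds a shortest-path spanning tree $\mathcal{T}$ of the incidence graph rooted at $U$, uses it to define a total order $\prec$ on $\mathcal{Q}_c^\lambda$, and partitions into the $\mathcal{F}_i^{c,\lambda}$ \emph{respecting this order}. The payoff is that when the cube $Q_{i+1}\subset B_0$ is transported along the tree path $B_0,B_1,\dots,B_m=U$, the order $\prec$ together with property~(3) of Proposition~\ref{Proposition:Darboux_Balls_Cover} (the distance-2 coloring: no ball of $\mathcal{B}$ meets two balls of the same color) forces
\[
(B_1\cup\cdots\cup B_m)\cap(Q_{i+2}\cup\cdots\cup Q_l)=\emptyset,
\]
so the intermediate balls are entirely free of remaining obstacle cubes. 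You never invoke property~(3); it is precisely the structural input that makes the paper's transport obstacle-free.

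Your substitute --- steer through ``shells'' near the boundary spheres --- rests on choices you have not secured. Lemma~\ref{Lemma:transition_charts_lemma} is applied \emph{before} the partition of unity determines $\mathrm{supp}\,f_B$, so nothing in the construction places the overlap regions $B_{ij}$ outside the cube-occupied zone; your parenthetical ``base points chosen near the boundary spheres'' does not help, since $\mathrm{supp}\,f_B$ may itself reach arbitrarily close to $\partial B$. Arranging the partition of unity so that each $\mathrm{supp}\,f_B$ avoids all the $V_{ij}$ while the $\rho_B$'s still sum to $1$ on $\mathrm{supp}\,f$ is a nontrivial compatibility condition you would have to set up explicitly. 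The same issue recurs inside $U$, where you must simultaneously dodge $\mathrm{supp}\,h$ and an arbitrary pattern of already-filled slots to reach a prescribed target slot. None of this is clearly impossible, but as written it is a genuine gap, and the paper's order-based mechanism sidesteps it entirely.
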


\begin{proof}
    Consider the graph $\mathcal{G}$ whose vertices are the Darboux balls $B \in \mathcal{B}$ together with the set $U$, where two vertices are connected by an edge if their corresponding sets have non-empty intersection. Proposition~\ref{Proposition:Darboux_Balls_Cover} implies that $\mathcal{G}$ is connected. Fix the vertex corresponding to $U$, and for any $B \in \mathcal{B}_c$ consider the shortest path $B_0 = B, B_1, \dots, B_m = U$ from $B$ to $U$. Lemma~\ref{Lemma:transition_charts_lemma} guarantees that for each $0 \le i < m$ there exists an open subset $B_{i(i+1)} \subset B_i \cap B_{i+1}$ on which the transition map from $B_i$ to $B_{i+1}$ restricts to the identity. In particular, any sufficiently small standard cube in $B_i$ can be mapped, via a Hamiltonian diffeomorphism, to a standard cube in $B_{i+1}$ by composing a translation with the chart transition map. Consequently, any sufficiently small standard cube in $B$ can be transported to $B_m = U$ so that its image is a standard cube in the chart $U$. Moreover, all cubes $Q \in \mathcal{Q}^{\lambda}_{c}$ with $Q \subset B$ can be arranged in a sequence so that they can be transported one by one to $U$ via Hamiltonian isotopies that fix all other cubes in $B$ while transporting a given cube as described above. Therefore, for every $(c,\lambda) \in \mathcal{C} \times \mathcal{X}$ and $B \in \mathcal{B}_c$, this procedure defines a total order on the set 
    \[
    \{Q \in \mathcal{Q}^{\lambda}_{c} \mid Q \subset B\}.
    \]
    Let $\mathcal{T}$ be a shortest-path spanning tree of $\mathcal{G}$ rooted at the vertex corresponding to the set $U$. Fix $(c,\lambda) \in \mathcal{C} \times \mathcal{X}$, and consider all cubes in $\mathcal{Q}^{\lambda}_{c}$. Let $B_1, B_2, \dots, B_k \in \mathcal{B}$ be the leaves of $\mathcal{T}$. For any $Q, Q' \in \mathcal{Q}^{\lambda}_{c}$, let $Q \subset B_Q \in \mathcal{B}_c$ and $Q' \subset B_{Q'} \in \mathcal{B}_c$. To each vertex corresponding to $B_Q, B_{Q'}$ we assign unique numbers $1 \le k_Q, k_{Q'} \le k$ such that the path from the leaf $B_{k_Q}$ to the root $U$ contains $B_Q$ (and similarly for $k_{Q'}$). We then define a total order on $\mathcal{Q}^{\lambda}_{c}$ by defining $Q \prec Q'$ if:
    \begin{enumerate}
        \item $k_Q < k_{Q'}$, or
        \item $k_Q = k_{Q'}$ and the distance from $B_Q$ to the root $U$ is less than that of $B_{Q'}$, or
        \item $k_Q = k_{Q'}$, $B_Q = B_{Q'}$, and $Q$ precedes $Q'$ in the order previously defined on $B_Q$.
    \end{enumerate}
    Finally, we partition the family of disjoint cubes $\mathcal{Q}^{\lambda}_c$, respecting the order $\prec$, into $N_L$ subfamilies $\{\mathcal{F}^{c,\lambda}_i\}_{i=1}^{N_L}$ so that the total volume of cubes in each subfamily does not exceed $\frac{1}{2}\mathrm{Vol}(Q_L) = \frac{1}{2} (2L)^{2n}$. Fix a family of cubes $\mathcal{F}^{c,\lambda}_i = \{Q_1, \ldots, Q_l\}$ whose indices respect the established order. We have already explained how a single cube can be transported to the set $U$, and from there further translated to $Q_L$ while avoiding $\mathrm{supp}\,h$. 

    Assume that the cubes $Q_1, \ldots, Q_i$ have already been transported to $Q_L$ via a Hamiltonian isotopy that fixes $Q_{i+1}, \ldots, Q_l$ and $\mathrm{supp}\,h$. We now show that $Q_{i+1}$ can also be transported to $Q_L$ via a Hamiltonian isotopy fixing $Q_{i+2}, \ldots, Q_l$ and $\mathrm{supp}\,h$. 

    The cube $Q_{i+1}$ belongs to a unique ball $B \in \mathcal{B}_c$. Let $B_0 = B, B_1, \ldots, B_m = U \subset \mathcal{B}$ be the path in the minimal spanning tree $\mathcal{T}$ connecting $B$ to $U$. By the construction of the order $\prec$ and by the third property of Proposition~\ref{Proposition:Darboux_Balls_Cover}, we have
    \[
    (B_1 \cup \cdots \cup B_m) \cap (Q_{i+2} \cup \cdots \cup Q_l) = \emptyset.
    \]
    Hence, we can transport the cube $Q_{i+1}$ to $Q_L$ as described above, without the risk of intersecting any of the remaining cubes along the way. The volume assumption ensures that all cubes fit inside the large cube $Q_L$.
\end{proof}

\noindent For $F\in C^{\infty}_c(M)$ denote $S(F):=\int_MF\,\omega^n$. Since $S(f)=0$ we can write:
\[f=f_U+\sum_{(c,\lambda)\in\mathcal{C}\times\mathcal{X}}\sum_{i=1}^NF^{c,\lambda}_i=(f_U-S(f_{U}) h)+\sum_{(c,\lambda)\in\mathcal{C}\times\mathcal{X}}\sum_{i=1}^{N_L}(F^{c,\lambda}_i-S(F^{c,\lambda}_i)h),\]
where $F^{c,\lambda}_i(x)=f_{c,\lambda}(x)$ for  $x\in\bigsqcup_{Q\in\mathcal{F}^{c,\lambda}_i}Q$ and $0$ otherwise. Note that
\begin{itemize}
    \item $(f_U-S(f_U)h)\in C^{\infty}_{0,c}(U)$ and $\|f_U-S(f_U)h\|_{\infty}\leq 2$, 
    \item $\Psi_i^*(F^{c,\lambda}_i-S(F^{c,\lambda}_i)h)\in C^{\infty}_{0,c}(U)$ and $\|\Psi_i^*(F^{c,\lambda}_i-S(F^{c,\lambda}_i)h)\|_{\infty}\leq 1$.
\end{itemize}

\noindent Finally, we apply Theorem~\hyperref[TheoremB]{B} to the function $(f_U - S(f_U)h)/\lceil\frac{2}{L}\rceil$, as well as to each of the functions $\bigl(\Psi_i^*(F^{c,\lambda}_i - S(F^{c,\lambda}_i)h)\bigr)/\lceil\frac{1}{L}\rceil$ for $1 \leq i \leq N_L$ and $(c,\lambda) \in \mathcal{C} \times \mathcal{X}$, which gives us desired representation for 
\[
\begin{aligned}
N &= \Big\lceil \frac{2}{L} \Big\rceil \cdot N(n)
   + \Big\lceil \frac{1}{L} \Big\rceil \cdot |\mathcal{X} \times \mathcal{C}| \cdot N_L \cdot N(n) \\[4pt]
  &= \Big\lceil \frac{2}{L} \Big\rceil \cdot N(n)
   + \Big\lceil \frac{1}{L} \Big\rceil \cdot 100^{2n} \cdot N(n) \cdot \tfrac{3}{L^{2n}} \cdot \mathrm{Vol}(\mathrm{supp}\,f),
\end{aligned}
\]
where $N(n)$ is the number from the Theorem~\hyperref[TheoremB]{B}, and $L$ depends only on $u$.\qed

\section{Proof of Theorem \ref{MainTheorem}}

The proof of Theorem \ref{MainTheorem} is consequence of Theorem \ref{AuxilaryTheorem} and the following proposition.

\begin{prop}\label{PropositionL1-Splitting}
    Let $(M,\omega)$ be an open symplectic manifold, and let $f\in C^{\infty}_{0,c}(M)$ with $\|f\|_{\infty}+\int_{M}|f|\omega^n\leq 1$. There exists a finite sequence of functions $f_1,f_2,\ldots,f_m\in C^{\infty}_{0,c}(M)$ such that $f=\sum_{i=1}^{m} f_i$ and $\sum_{i=1}^{m}\|f_i\|_{\infty}\cdot(\mathrm{Vol}(\mathrm{supp}\, f_i)+1)\leq 100$.
\end{prop}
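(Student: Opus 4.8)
The plan is to perform a dyadic decomposition of $f$ according to the size of its values, and to control the volume of each "level set slab" by the $L^1$-norm of $f$. Concretely, I would first split $f=f^++f^-$ into the parts where $f>0$ and $f<0$ (smoothing the cutoff slightly), so it suffices to treat a nonnegative function, say $g:=f^+$, with $\|g\|_\infty\le 1$ and $\int_M g\,\omega^n\le 1$. For $k\ge 0$ let $A_k:=\{2^{-k-1}<g\le 2^{-k}\}$ be the dyadic super-level slabs; on $A_k$ one has $g\approx 2^{-k}$, and by Chebyshev $\mathrm{Vol}(A_k)\le 2^{k+1}\int_M g\,\omega^n\le 2^{k+1}$. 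Using a smooth partition of unity subordinate to a slightly fattened cover $\{2^{-k-2}<g<2^{-k+1}\}$, write $g=\sum_{k\ge 0} g_k$ with $\mathrm{supp}\,g_k$ contained in a set of volume $\lesssim 2^{k}$ and $\|g_k\|_\infty\lesssim 2^{-k}$; hence $\|g_k\|_\infty\cdot\mathrm{Vol}(\mathrm{supp}\,g_k)\lesssim 1$, and the "$+1$" term contributes $\sum_k 2^{-k}\cdot 1 \lesssim 1$. The sum over $k$ of $\|g_k\|_\infty(\mathrm{Vol}(\mathrm{supp}\,g_k)+1)$ is then a convergent geometric-type series, bounded by an absolute constant.

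Two issues need care. First, this naive decomposition has infinitely many pieces $g_k$, whereas the statement asks for a \emph{finite} sequence. This is easily fixed: since $g$ is smooth and compactly supported, $\mathrm{supp}\,g$ has finite volume $V_0$, so for all $k$ we also have the trivial bound $\mathrm{Vol}(\mathrm{supp}\,g_k)\le V_0$; but more to the point, one should instead truncate — collect all the tail pieces with $g\le\varepsilon$ for a suitable small $\varepsilon$ into a single function $g_{\mathrm{tail}}$ supported on $\mathrm{supp}\,g$ with $\|g_{\mathrm{tail}}\|_\infty\le\varepsilon$, so that $\|g_{\mathrm{tail}}\|_\infty(\mathrm{Vol}(\mathrm{supp}\,g)+1)\le\varepsilon(V_0+1)$. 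To make this small regardless of $V_0$ we do not fix $\varepsilon$ in advance: instead we cut at level $\varepsilon:=1/(V_0+1)$, which is legitimate since $V_0$ is finite and determined by $f$. This yields finitely many dyadic pieces $g_0,\dots,g_{K}$ with $2^{-K}\sim\varepsilon$, plus one tail piece, and the total weight stays bounded by an absolute constant (the geometric series over the finitely many dyadic levels is still $\lesssim 1$, and the tail contributes $\le 1$).

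The second, and main, obstacle is the normalization constraint: the proposition requires each $f_i\in C^\infty_{0,c}(M)$, i.e. $\int_M f_i\,\omega^n=0$, but the pieces $g_k$ produced above have no reason to be mean-zero. The fix is to absorb the means into a fixed auxiliary bump. Since $(M,\omega)$ is open, pick once and for all a bump function $b\in C^\infty_c(M)$ with $\int_M b\,\omega^n=1$, $\|b\|_\infty\le 1$, and $\mathrm{supp}\,b$ of volume $\le 2$ (possible on any open manifold, after possibly rescaling). For each piece $p$ in our decomposition (the $g_k$'s for both signs, and the two tails), replace $p$ by $\widetilde p:=p-(\int_M p\,\omega^n)\,b$; then $\widetilde p$ is mean-zero, and $f=\sum_i \widetilde f_i + \big(\sum_i \int f_i\big)b = \sum_i\widetilde f_i$ since $\sum_i\int f_i=\int f=0$. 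We must check the weight bound survives: $\|\int_M p\,\omega^n|\le\|p\|_\infty\mathrm{Vol}(\mathrm{supp}\,p)$, so $\|(\int p)\,b\|_\infty\cdot(\mathrm{Vol}(\mathrm{supp}\,b)+1)\le 3\|p\|_\infty\mathrm{Vol}(\mathrm{supp}\,p)\lesssim 3\cdot(\text{weight of }p)$, and by the triangle inequality $\|\widetilde p\|_\infty(\mathrm{Vol}(\mathrm{supp}\,\widetilde p)+1)\le \|p\|_\infty(\mathrm{Vol}(\mathrm{supp}\,p)+1) + 3\|p\|_\infty\mathrm{Vol}(\mathrm{supp}\,p) \le 4\cdot(\text{weight of }p)$ — here using that $\mathrm{supp}\,\widetilde p\subset\mathrm{supp}\,p\cup\mathrm{supp}\,b$ and that the correction only enlarges the support by the fixed set $\mathrm{supp}\,b$, whose "$+1$"-inclusive volume $\le 3$ is absorbed. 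Summing, the total weight is at most an absolute constant, which by tracking the geometric series carefully (and being generous) one arranges to be $\le 100$. The only genuinely delicate point is bookkeeping the constants so the final bound is literally $100$ rather than merely $O(1)$; this is routine once the decomposition is set up as above.
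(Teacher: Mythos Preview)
Your overall strategy---level-set decomposition plus a fixed bump $b$ to restore mean zero---is exactly what the paper does. However, there is a genuine gap in the normalization step. You assert, ``by the triangle inequality,'' that
\[
\|\widetilde p\|_\infty\bigl(\mathrm{Vol}(\mathrm{supp}\,\widetilde p)+1\bigr)\le \|p\|_\infty\bigl(\mathrm{Vol}(\mathrm{supp}\,p)+1\bigr) + 3\|p\|_\infty\,\mathrm{Vol}(\mathrm{supp}\,p).
\]
This is false: the weight $w(p):=\|p\|_\infty(\mathrm{Vol}(\mathrm{supp}\,p)+1)$ is not subadditive. With $b$ supported disjointly from $p$ one has $\|\widetilde p\|_\infty=\max\bigl(\|p\|_\infty,\,|{\textstyle\int}p|\cdot\|b\|_\infty\bigr)$. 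In your dyadic setup the slab $A_k$ can have volume of order $2^{k}$ (Chebyshev gives no better), so $|\int g_k|$ can be of order $2^{-k}\cdot 2^{k}=1$, far larger than $\|g_k\|_\infty\approx 2^{-k}$. Then $\|\widetilde g_k\|_\infty$ is of order $1$ while $\mathrm{Vol}(\mathrm{supp}\,\widetilde g_k)$ is of order $2^{k}$, so $w(\widetilde g_k)$ is of order $2^{k}$, and the sum over $k$ is unbounded. Concretely, if $g\approx 2^{-K}$ on a set of volume $\approx 2^{K}$ (so $\|g\|_{L^1}\approx 1$), then a single dyadic level is active but its normalized piece already has weight $\approx 2^{K}$.

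The paper repairs exactly this point. Instead of pure dyadic halving, it chooses the thresholds $a_0>a_1>\cdots$ \emph{adaptively}: take $a_{i+1}=a_i/2$ unless $\mathrm{Vol}\{a_i/2<|f|\le a_i\}\ge 1$, in which case pick $a_{i+1}$ so that this slab has volume exactly $1$. This forces $\mathrm{Vol}(S_i)\le 1$ for every slab, hence $\bigl|\int_{S_i}f\bigr|\le a_{i-1}\cdot 1\le\|f|_{S_i}\|_\infty$, so subtracting the bump correction does \emph{not} raise the $L^\infty$ norm and the weight stays $\le a_{i-1}\cdot(\mathrm{Vol}(S_i)+3)$. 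The price is that there may be many more levels than in the dyadic scheme; the paper controls $\sum_i a_i$ by observing that the ``non-dyadic'' indices $k_j$ (those with $\mathrm{Vol}(S_{k_j})=1$) satisfy $\sum_j a_{k_j}=\sum_j a_{k_j}\mathrm{Vol}(S_{k_j})\le\int|f|\le 1$, while between consecutive $k_j$'s the sequence is geometric, yielding $\sum_i a_i\le 4$. Your argument can be salvaged along these lines---either by this adaptive threshold, or by further subdividing each dyadic slab $A_k$ into pieces of volume $\le 1$ before normalizing (the total count of such pieces times $2^{-k}$ is then controlled by $\sum_k 2^{-k}\mathrm{Vol}(A_k)\le 2\int|f|$).
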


\noindent First apply Proposition \ref{PropositionL1-Splitting} to get functions $f_1,\ldots,f_m\in C^{\infty}_{0,c}(M)$ with $f=\sum_{i=1}^mf_i$ and $\sum_{i=1}^m\|f_i\|_{\infty}\cdot(\mathrm{Vol}(\mathrm{supp}\,f)+1)\leq 100$. Next, we apply Theorem \ref{AuxilaryTheorem} to each function $f_i/\|f_i\|_{\infty}$ for $1\leq i\leq m$. We get
    \[(\forall 1\leq i\leq m)\quad\frac{f_i}{\|f_i\|_{\infty}}=\sum_{j=1}^{N_i}\Phi_{j,+}^*u-\Phi_{j,-}^*u,\text{ with }N_i\leq c(u)\cdot (\mathrm{Vol}(\mathrm{supp}\,f_i)+1).\]
    \noindent Finally, we can write
    \[f=\sum_{i=1}^m\sum_{j=1}^{N_i}\|f_i\|_{\infty}\cdot(\Phi_{j,+}^*u-\Phi_{j,-}^*u),\]
    where $\sum_{i=1}^m N_i\|f_i\|_{\infty}\leq c(u)\cdot\sum_{i=1}^n\|f_i\|_{\infty}\cdot(\mathrm{Vol}(\mathrm{supp}\,f_i)+1)\leq 100\cdot c(u)$.\qed
    
\subsection{Proof of Proposition \ref{PropositionL1-Splitting}}

    Assume that the function $f$ satisfies the following condition:
    
    \begin{equation}\label{condition}
        a\neq 0\implies \mathrm{Vol}(f^{-1}(\{a\}))=0
    \end{equation}
    
    \noindent We inductively construct a decreasing sequence $a_0>a_1>a_2>\ldots>0$ of positive numbers in the following way: we set $a_0=1$, and once we have constructed $a_0,a_1,\ldots,a_i$ we define $a_{i+1}$ as follows
    \begin{itemize}
        \item If $\mathrm{Vol}\,\{x\in M\mid a_i/2< |f(x)|\leq a_i\}<1$ we define $a_{i+1}:=a_i/2$,
        \item otherwise, we define $a_{i+1}$ by the equation $\mathrm{Vol}\,\{x\in M\mid a_{i+1}< |f(x)|\leq a_{i}\}=1$.
    \end{itemize}

    \noindent Assumption at the beginning ensures that the function 
    \[t\mapsto\mathrm{Vol}\,\{x\in M\mid t<|f(x)|\leq a\}\]
    is continuous, and hence the sequence is well defined. Define sets
    \[S_i:=\{x\in M\mid a_{i}<|f(x)|\leq a_{i-1}\}.\]
    \noindent Note that $\mathrm{supp}\,f=\bigsqcup_{i=1}^{\infty} S_i$,  $a_{i+1}\geq\frac{a_i}{2}$ and $\mathrm{Vol}(S_i)\leq 1$. Additionally we have

    \begin{equation}\label{ineq1}
        1\geq\int_{M}|f|\omega^n=\sum_{i=1}^{\infty}\int_{S_i}|f|\omega^n\geq\sum_{i=1}^{\infty}a_{i}\cdot\mathrm{Vol}(S_i)\geq\sum_{i=1}^{\infty}\frac{a_{i-1}}{2}\cdot\mathrm{Vol}(S_i)
    \end{equation}

    \noindent Let $k_1 < k_2 < \ldots < k_m$ be the indices for which $\mathrm{Vol}(S_{k_i}) = 1$. Since $f$ has compact support, only finitely many such indices exist. For all other indices $i$, we have the relation $a_{i+1} = a_i/2$. Hence we obtain
    \begin{equation}\label{ineq2}
        \begin{split}
            \sum_{i=0}^{\infty}a_i & \leq \sum_{j=0}^{\infty}\frac{a_0}{2^j}+\sum_{j=1}^m\sum_{i=0}^{\infty}\frac{a_{k_j}}{2^i}=2+2\,\sum_{j=1}^{m}a_{k_j}= 2+2\,\sum_{j=1}^{m}a_{k_j}\cdot\mathrm{Vol}(S_{k_j}) \\
            & \leq 2+2\int_{S_{k_1}\cup S_{k_2}\cup\ldots\cup S_{k_m}}|f|\,\omega^n\leq 2+2\int_{M}|f|\,\omega^n\leq 4.
        \end{split}
    \end{equation}
    
    \noindent Let $h \in C^{\infty}_c(M \setminus \mathrm{supp}\,f)$ be a function with $\int_{M} h\,\omega^n = 1$, $\|h\|_{\infty} = 1$, and $\mathrm{Vol}(\mathrm{supp}\,h) = 2$ (it exists since $\mathrm{Vol}(M)=\infty$). Define a sequence of functions
    \[(\forall\, i\in\mathbb{N})\quad\widetilde{f}_i:=f|_{S_i}-h\cdot\int_{S_i}f\,\omega^n,\]

    \noindent where $f|_{S_i}(x):=f(x)$ for $x\in S_i$ and $f|_{S_i}(x):=0$ for $x\in M\setminus S_i$. Note that the functions $\widetilde{f}_i$ are discontinuous, but before addressing this let us establish some properties. First,  
    \[
    f=\sum_{i=1}^{\infty}f|_{S_i}
    =\sum_{i=1}^{\infty}f|_{S_i}-h\cdot\int_{M}f\,\omega^n
    =\sum_{i=1}^{\infty}\Big(f|_{S_i}-h\cdot\int_{S_i}f\,\omega^n\Big)
    =\sum_{i=1}^{\infty}\widetilde{f}_i,
    \]
    \[
    (\forall\,i\in\mathbb{N})\quad
    \int_{M}\widetilde{f}_i\,\omega^n
    =\int_{S_i}f\,\omega^n
    -\int_{M}h\,\omega^n\cdot\int_{S_i}f\,\omega^n
    =0.
    \]

    \noindent Moreover,
    \begin{equation}\label{ineq3}
    \begin{aligned}
        &\|\widetilde{f}_i\|_{\infty}
        =\max\Big\{a_{i-1},\Big|\int_{S_i}f\,\omega^n\Big|\Big\}
        \leq\max\{a_{i-1},a_{i-1}\cdot\mathrm{Vol}(S_i)\}
        =a_{i-1}, \\[1ex]
        &\mathrm{Vol}(\mathrm{supp}\,\widetilde{f}_i)
        =\mathrm{Vol}(S_i)+\mathrm{Vol}(\mathrm{supp}\,h)
        =\mathrm{Vol}(S_i)+2.
    \end{aligned}
    \end{equation}

    \noindent Using inequalities (\ref{ineq1}),(\ref{ineq2}) and (\ref{ineq3}) we obtain
    \begin{equation}\label{ineq4}
        \begin{split}
            \sum_{i=1}^{\infty}\|\widetilde{f}_i\|_{\infty}\cdot(\mathrm{Vol}(\mathrm{supp}\,\widetilde{f}_i)+1) & \leq\sum_{i=1}^{\infty}a_{i-1}(\mathrm{Vol}(S_i)+\mathrm{Vol}(\mathrm{supp}\,h)+1) \\
            & =\sum_{i=1}^{\infty}a_{i-1}\cdot\mathrm{Vol}(S_i)+3\cdot\sum_{i=0}^{\infty}a_i\leq 14
        \end{split}
    \end{equation}

    \noindent Next, note that $\sum_{i=1}^{\infty}\mathrm{Vol}(S_i)=\mathrm{Vol}(\mathrm{supp}\,f)<\infty$, so there exists an index $m\in\mathbb{N}$ such that $\sum_{i=m}^{\infty}\mathrm{Vol}(S_i)<1$. Define the set $S_{\infty}:=\bigcup_{i=m}^{\infty}S_i$ and the function
    \[
    \widetilde{f}_{\infty} := \sum_{i=m}^{\infty} \widetilde{f}_i = f|_{S_{\infty}} - h \cdot \int_{S_{\infty}} f\,\omega^n.
    \]

    \noindent Note that $\|\widetilde{f}_{\infty}\|_{\infty}\leq 1$, $\mathrm{Vol}(S_{\infty})=\sum_{i=m}^{\infty}\mathrm{Vol}(S_i)<1$ and $\mathrm{supp}\,\widetilde{f}_{\infty}=S_{\infty}\sqcup\mathrm{supp}\,h$, therefore we have
    \[f=\widetilde{f}_{\infty}+\sum_{i=1}^{m-1}\widetilde{f}_i,\]
    \[\|\widetilde{f}_{\infty}\|_{\infty}(\mathrm{Vol}(\mathrm{supp}\,f_{\infty})+1)+\sum_{i=1}^{m-1}\|\widetilde{f}_i\|_{\infty}(\mathrm{Vol}(\mathrm{supp}\,\widetilde{f}_i)+1)\leq 4+14=18.\]
    It only remains to modify the functions $\widetilde{f}_1, \widetilde{f}_2, \ldots, \widetilde{f}_{m-1}, \widetilde{f}_{\infty}$ to obtain smooth functions $f_1, f_2, \ldots, f_m$ with the same sum, such that the $C^0$-norms and the volumes of the supports change by a sufficiently small amount. Fix a Riemannian distance $d$ on $M$ and let $\varepsilon > 0$. For each $1 \leq i \leq m-1$, let $\chi_i: M \to [0,1]$ be smooth with $\chi_i|_{S_i} = 1$ and $\chi_i(x) = 0$ whenever $d(x, \overline{S_i}) \geq \varepsilon$. Finally, define
    \[
    (\forall\, 1 \leq i \leq m-1)\quad f_i := \chi_i f - h \cdot \int_{M} \chi_i f \, \omega^n,
    \]
    \[
        f_m := f - \sum_{i=1}^{m-1} f_i 
       = f \cdot \Big(1 - \sum_{i=1}^{m-1} \chi_i\Big)  + h \cdot \int_{M} f \cdot \Big(\sum_{i=1}^{m-1} \chi_i\Big) \, \omega^n.
    \]

    \noindent For every $i \in \{1,2,\ldots,m-1\}$ we have $\|f_i\|_{\infty} \to \|\widetilde{f}_i\|_{\infty}$ and $\mathrm{Vol}(\mathrm{supp}\,f_i) \to \mathrm{Vol}(\mathrm{supp}\,\widetilde{f}_i)$ as $\varepsilon \to 0$; therefore, for $\varepsilon > 0$ small enough we have
    \begin{equation}
    \label{ineq:sum_1_to_(m-1)}
        \sum_{i=1}^{m-1}\|f_i\|_{\infty}\cdot(\mathrm{Vol}(\mathrm{supp}\,f_i)+1)\leq 1+\sum_{i=1}^{m-1}\|f_i\|_{\infty}\cdot(\mathrm{Vol}(\mathrm{supp}\,\widetilde{f}_i)+1)\leq 19.
    \end{equation}

    \noindent Next, note that $\mathrm{supp}\,f\cdot\big(1-\sum_{i=1}^{m-1}\chi_i\big)\subset(\mathrm{supp}\,f)\setminus\bigcup_{i=1}^{m-1}S_i=S_{\infty}$, hence we get
    \[\mathrm{Vol}(\mathrm{supp}\,f_m)\leq\mathrm{Vol}( S_{\infty}\sqcup\mathrm{supp}\,h)\leq 1+2=3.\]
    
    \noindent Additionally, since $\|f_{m}\|_{\infty}\leq 1$, we obtain

    \begin{equation}\label{ineq:f_m*Vol(supp)}
        \|f_m\|_{\infty}\cdot(\mathrm{Vol}(\mathrm{supp}\,f_m)+1)\leq 4.
    \end{equation}
    Combining (\ref{ineq:sum_1_to_(m-1)}) and (\ref{ineq:f_m*Vol(supp)}) we get
    \[\sum_{i=1}^m\|f_i\|_{\infty}\cdot(\mathrm{Vol}(\mathrm{supp}\,f_i)+1)\leq 19+4=23,\]

    \noindent which finishes the proof in the case when $f$ satisfies (\ref{condition}). If $f$ does not satisfy (\ref{condition}), then there exists an arbitrarily $C^0$-small function $g$, supported in a slightly larger neighbourhood of $\mathrm{supp}\,f$, such that $f-g$ satisfies (\ref{condition}). Hence we can write $f-g=\sum_{i=1}^n f_i$ as above. Finally, by choosing $g$ so that $\|g\|_{\infty}\cdot \big(\mathrm{Vol}(\mathrm{supp}\,g)+1\big)<1$, the result follows.\qed

\section{Proof of Theorem \ref{Theorem:Upper_Bound_Infinite_volume}}

\begin{proof}[Proof of Theorem \ref{Theorem:Upper_Bound_Infinite_volume}]
    Let $H$ be a Hamiltonian function with $\phi^1_H = \phi$. By Lemma~\ref{Lemma:Reduction} and Proposition~\ref{Proposition:first_reduction}, we obtain
    \[
    \vertiii{\phi^1_H} = \vertiii{\phi^1_G} \leq \|G\|_{L^{(1,\infty)}} = \|H\|_{L^{(1,\infty)}}.
    \]
    Taking the infimum over all Hamiltonians $H$ generating $\phi$ completes the proof.
\end{proof}

\begin{lemma}\label{Lemma:Reduction}
Let $(M,\omega)$ be a symplectic manifold of infinite volume, and let $\phi^1_H$ be the time--$1$ map of a compactly supported Hamiltonian $H:[0,1]\times M \to \mathbb{R}$ normalized by
\[
\int_0^1 \int_M H \,\omega^n\,dt = 0.
\]
Then, for any $\varepsilon>0$, there exists a compactly supported Hamiltonian $G:[0,1]\times M \to \mathbb{R}$ such that
\begin{enumerate}[label=(\roman*)]
    \item $\phi^1_G = \phi^1_H$,
    \item $\int_M G(t,\cdot)\,\omega^n = 0$ for all $t \in [0,1]$,
    \item $\|G\|_{L^{(1,\infty)}} = \|H\|_{L^{(1,\infty)}}$.
\end{enumerate}
\end{lemma}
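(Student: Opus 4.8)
\emph{Proof plan.} The plan is to repair the failure of each time-slice $H(t,\cdot)$ to have zero mean by adding to $H$ a Hamiltonian that is supported far from $\mathrm{supp}\,H$, is so small in the sup-norm that it leaves the $L^{(1,\infty)}$-norm unchanged, and has trivial time--$1$ map; the infinite volume of $M$ is precisely what allows such a small correction to carry a prescribed (possibly large) total mass. Concretely, set $\bar m(t):=\int_M H(t,\cdot)\,\omega^n$, a smooth function of $t$ with $\int_0^1\bar m(t)\,dt=0$ by the normalization hypothesis, and let $L:=\overline{\bigcup_{t\in[0,1]}\mathrm{supp}\,H(t,\cdot)}$, which is compact. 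Since $H$ vanishes on a neighbourhood of every point of $M\setminus L$, the vector field $X_{H(t,\cdot)}$ vanishes there, so $\phi^t_H=\mathrm{id}$ on $M\setminus L$ and hence $\phi^t_H(L)=L$ for all $t$. Using $\mathrm{Vol}(M)=\infty>\mathrm{Vol}(L)$, choose $g\in C^\infty_c(M\setminus L)$ with $0\le g\le 1$ and $\gamma:=\int_M g\,\omega^n\ge\mathrm{Vol}(L)$, and define
\[
\lambda(t):=-\bar m(t)/\gamma,\qquad G(t,x):=H(t,x)+\lambda(t)\,g(x),
\]
which is smooth, compactly supported, and satisfies $\int_0^1\lambda(t)\,dt=0$.

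Conditions (ii) and (iii) then follow at once. For (ii), $\int_M G(t,\cdot)\,\omega^n=\bar m(t)+\lambda(t)\gamma=0$. For (iii), the supports $\mathrm{supp}\,H(t,\cdot)\subset L$ and $\mathrm{supp}\,g\subset M\setminus L$ are disjoint, so at each point only one summand of $G(t,\cdot)$ is nonzero; hence $\|G(t,\cdot)\|_\infty\ge\|H(t,\cdot)\|_\infty$, while $|\lambda(t)|=|\bar m(t)|/\gamma\le\|H(t,\cdot)\|_\infty\,\mathrm{Vol}(L)/\gamma\le\|H(t,\cdot)\|_\infty$ together with $\|g\|_\infty\le1$ give $\|G(t,\cdot)\|_\infty=\|H(t,\cdot)\|_\infty$ for every $t$. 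Integrating in $t$ yields $\|G\|_{L^{(1,\infty)}}=\|H\|_{L^{(1,\infty)}}$; in particular the equality is exact and the parameter $\varepsilon$ plays no role.

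It remains to verify (i), i.e.\ $\phi^1_G=\phi^1_H$. First, the flow of the Hamiltonian $(t,x)\mapsto\lambda(t)g(x)$ is a time reparametrization of the autonomous flow $\psi^s$ of $g$: with $\Lambda(t):=\int_0^t\lambda(s)\,ds$, the path $t\mapsto\psi^{\Lambda(t)}$ solves the defining ODE, so $\phi^t_{\lambda g}=\psi^{\Lambda(t)}$, and since $\Lambda(1)=0$ we get $\phi^1_{\lambda g}=\mathrm{id}$. Second, I claim $\phi^t_G=\phi^t_H\circ\phi^t_{\lambda g}$ for all $t$; granting this and taking $t=1$ finishes the proof. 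To prove the claim, differentiate the right-hand side and use the standard identity $(\phi^t_H)_*X_g=X_{g\circ(\phi^t_H)^{-1}}$ together with $g\circ(\phi^t_H)^{-1}=g$ — valid because $(\phi^t_H)^{-1}$ is the identity on $M\setminus L\supset\mathrm{supp}\,g$ and preserves $L$, on which $g\equiv0$ — to conclude that $\phi^t_H\circ\phi^t_{\lambda g}$ solves the same ODE $\dot\Phi=X_{G(t,\cdot)}\circ\Phi$ as $\phi^t_G$ with the same initial condition, so the two agree by uniqueness. I expect this flow-splitting identity to be the only delicate point: mere disjointness of the supports of $H(t,\cdot)$ and $\lambda(t)g$ — equivalently $\{H(t,\cdot),\lambda(t)g\}\equiv0$ — is not by itself enough, and one genuinely uses that $\phi^t_H$ is the identity outside $L$, i.e.\ that $X_g$ is $\phi^t_H$-invariant. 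Everything else is routine.
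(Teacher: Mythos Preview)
Your proof is correct and follows essentially the same approach as the paper: both add a correction of the form $-\big(\int_M H(t,\cdot)\,\omega^n\big)\cdot\chi$ for a bump function $\chi$ supported away from $\mathrm{supp}\,H$ and chosen (via infinite volume) so that the sup-norm of the correction never exceeds $\|H(t,\cdot)\|_\infty$, then use $\int_0^1 a(t)\,dt=0$ to conclude that the correction has trivial time--$1$ map. Your treatment is in fact more careful than the paper's, which simply asserts $\phi^1_G=\phi^1_H$ without spelling out the flow-splitting identity you justify via the $\phi^t_H$-invariance of $X_g$.
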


\begin{proof}

\noindent For $t \in [0,1]$, set
\[
a(t) := \int_{M} H(t,\cdot)\,\omega^n, \quad \text{so that } \int_0^1 a(t)\,dt = 0.
\]
Let $V \subset M$ be a finite-volume subset with $\mathrm{supp}\,H(t,\cdot) \subset V$ for all $t$. Choose a bump function $\chi:M\to [0,1]$ such that
\[
\mathrm{supp}(\chi) \cap \mathrm{supp}(H) = \emptyset, \quad 
\|\chi\|_{\infty} < \frac{1}{\mathrm{Vol}(V)}, \quad 
\int_{M} \chi\,\omega^n = 1,
\]
which exists since $H$ is compactly supported and $M$ has infinite volume. Let $\phi_K^t$ be the Hamiltonian flow of
\[
K(t,x) := -a(t)\,\chi(x),
\]
so that $\phi^1_K = \mathrm{Id}$. The triangle inequality gives $\|H(t,\cdot)\|_{\infty} \geq |K(t,x)|$. Define
\[
\Phi^t := \phi_K^t \circ \phi_H^t, \qquad G(t,x) := H(t,x) - a(t)\,\chi(x),
\]
so that $\Phi^1 = \phi^1_G = \phi^1_H$ and $\int_M G(t,\cdot)\,\omega^n = 0$ for all $t$.\\

\noindent Since $\|H(t,\cdot)\|_{\infty} \geq \|K(t,\cdot)\|_{\infty}$ and the supports of $H$ and $K$ are disjoint, we have
\[
\|G\|_{L^{(1,\infty)}} = \int_0^1 \|G(t,\cdot)\|_{\infty}\,dt = \int_0^1 \|H(t,\cdot)\|_{\infty}\,dt = \|H\|_{L^(1,\infty)},
\]
which completes the proof.
\end{proof}

\begin{prop}\label{Proposition:first_reduction}
    Let $G:[0,1]\times M \to \mathbb{R}$ be a compactly supported Hamiltonian function such that
    \[
        \int_{M} G(t,\cdot)\,\omega^n = 0 \quad \text{for all } t \in [0,1].
    \]
    Then $\vertiii{\phi^1_G} \leq \|G\|_{L^{(1,\infty)}}$.
\end{prop}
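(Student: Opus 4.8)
The plan is as follows. From the definition of $\vertiii{\,\cdot\,}$ one has the trivial bound $\vertiii{\phi^1_G}\le\int_0^1(\|G(t,\cdot)\|_\infty+\|G(t,\cdot)\|_{L^1})\,dt=\|G\|_{L^{(1,\infty)}}+\|G\|_{L^{(1,1)}}$, so the entire content of the proposition is the removal of the $\|G\|_{L^{(1,1)}}$ term. I would therefore aim to construct, for every $\varepsilon>0$, a compactly supported Hamiltonian $H^\varepsilon$ with $\phi^1_{H^\varepsilon}=\phi^1_G$ and $\int_0^1(\|H^\varepsilon(t,\cdot)\|_\infty+\|H^\varepsilon(t,\cdot)\|_{L^1})\,dt\le\|G\|_{L^{(1,\infty)}}+\varepsilon$, and then let $\varepsilon\to0$.

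First I would reduce to near--identity pieces by subdividing $[0,1]$: writing $\phi^1_G=w_K\circ\cdots\circ w_1$ with $w_k$ the time--$1$ map of the rescaled Hamiltonian $g_k(s,x):=\tfrac1K G(\tfrac{k-1}{K}+\tfrac sK,x)$ (compactly supported, spatially mean--zero, with $\sum_k\int_0^1\|g_k(s,\cdot)\|_\infty\,ds=\|G\|_{L^{(1,\infty)}}$, and $w_k$ as $C^1$--close to the identity as desired for $K$ large), the triangle inequality for $\vertiii{\,\cdot\,}$ reduces everything to estimating each $\vertiii{w_k}$. Next I would decompose each $g_k(s,\cdot)$ into ``shells'' supported in pairwise disjoint sets of volume $<\delta$ — using the superlevel--set/$L^1$--splitting mechanism that underlies Proposition~\ref{PropositionL1-Splitting}, refined so that every piece has support of volume $<\delta$, and carrying the resulting spatial--mean corrections into pairwise disjoint bump regions in the (infinite--volume) complement of all supports in play. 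Since the shells are disjoint, this presents $w_k$ as a product $w_k=\prod_j\phi^1_{G_{k,j}}$ with $\mathrm{Vol}(\mathrm{supp}\,G_{k,j})<\delta$, hence $\|G_{k,j}\|_{L^1}\le\delta\,\|G_{k,j}\|_\infty$ and $\vertiii{\phi^1_{G_{k,j}}}\le(1+\delta)\|G_{k,j}\|_\infty$, up to the (shrinkable) cost of the auxiliary bumps.

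The hard part will be controlling $\sum_{k,j}\|G_{k,j}\|_\infty$. A crude shell decomposition only bounds it by a fixed multiple of $\|G\|_{L^{(1,\infty)}}$ — which is precisely the constant--factor estimate $\|\cdot\|\le C(\|\cdot\|_\infty+\|\cdot\|_{L^1})$ already supplied by Corollary~\ref{Corollary:upper_bound} — whereas here one needs the sharp constant $1$. Reaching it requires organizing the decomposition of $g_k$ so efficiently that the $\|\cdot\|_\infty$'s of the pieces \emph{add up} to essentially $\|g_k\|_\infty$ rather than overshooting it; the tension is that the ``ideal'' decomposition (a layer--cake over the values of $g_k$) has the right additivity of sup--norms but its layers are nested superlevel sets whose flows do not simply compose, while the product structure forces disjoint supports. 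I expect this to be the crux and to force a limiting/telescoping procedure that peels off one thin layer at a time, paying an overhead that tends to $0$ at each step, with the infinite volume of $M$ used to absorb all the mean corrections at no asymptotic cost. Granting this, $\sum_{k,j}\|G_{k,j}\|_\infty\to\|G\|_{L^{(1,\infty)}}$, the bump cost $\to0$ and $\delta\to0$, which gives $\vertiii{\phi^1_G}\le\|G\|_{L^{(1,\infty)}}+\varepsilon$ for every $\varepsilon>0$, as claimed.
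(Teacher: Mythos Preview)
Your reduction to near--identity, mean--zero, essentially autonomous pieces via time subdivision is the same first step the paper takes. The gap is exactly where you flag it: you need $\sum_{k,j}\|G_{k,j}\|_\infty\to\|G\|_{L^{(1,\infty)}}$, and the mechanism you sketch cannot deliver this. If the $G_{k,j}$ have pairwise disjoint supports and their time--$1$ maps compose to $\phi^1_{g_k}$, then on the union of those supports the sum $\sum_j G_{k,j}$ must agree with $g_k$, so $\|G_{k,j}\|_\infty=\sup_{\mathrm{supp}\,G_{k,j}}|g_k|$. For a function that is, say, approximately a nonzero constant on a region of volume $V$, partitioning into $\sim V/\delta$ pieces of volume $<\delta$ makes $\sum_j\|G_{k,j}\|_\infty\sim V/\delta\to\infty$. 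Switching to a layer--cake does give additivity of sup--norms, but the layers are nested, not disjoint, and each has large support, so the $L^1$ saving disappears. ``Peeling off one thin layer at a time'' does not escape this dichotomy: any scheme that writes $g_k=\sum_j G_{k,j}$ with disjoint small supports is forced to overshoot in $\sum_j\|G_{k,j}\|_\infty$ by a factor comparable to $\mathrm{Vol}(\mathrm{supp}\,g_k)/\delta$.

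The paper avoids this obstacle by a completely different idea, going back to Sikorav. After the same time subdivision, it reduces to the autonomous, mean--zero case (Proposition~\ref{Prop:Autonomous_Bound}) and then does \emph{not} try to shrink supports. Instead it approximates $H$ by a ``step function'' $K=\sum_i c_i\,(F_0\circ\phi_i^{-1})$, where the $\phi_i$ carry one fixed model bump $F_0$ into many disjoint small cubes $Q_i$ with $c_i=\langle H\rangle_{Q_i}$. The point is that $\phi^1_K$ is Hamiltonian--conjugate, up to errors of Hofer size $O(\delta)$, to the time--$1$ map of $(\sum_i c_i)F_0$; and since $\int_M H\,\omega^n=0$ one has $\sum_i c_i\to 0$, so $\vertiii{\phi^1_K}$ is arbitrarily small (Claim~\ref{Claim:Sikorav_trick}). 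The entire budget $\|H\|_\infty$ is then spent on the \emph{difference} $H-K$, which by construction satisfies $\|H-K\|_\infty\le\|H\|_\infty+\varepsilon$ and $\|H-K\|_{L^1}<\varepsilon$, so the trivial bound $\vertiii{\cdot}\le\|\cdot\|_\infty+\|\cdot\|_{L^1}$ applied to $H-K$ already gives the sharp constant. In short: the paper makes the $L^1$ term small not by shrinking supports, but by subtracting a step function whose own time--$1$ map is nearly free via Sikorav's transport--and--cancel trick; this is the missing ingredient in your outline.
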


\begin{proof}
Split the interval $[0,1]$ into $N$ intervals
\[I_i:=\Big[\frac{i-1}{N},\frac{i}{N}\Big],\quad i\in\{1,\ldots,N\}\]

\noindent For each $i\in\{1,\ldots,N\}$ define a function $g_i\in C^{\infty}_c(M)$ as the average of $G$ over $I_i$:
\[g_i(x):=\int_{I_i} G(t,x)\,dt.\]

\noindent Let $\chi_i:[0,1]\to[0,\infty)$ be a smooth bump function with the support in $I_i$ which satisfies
\begin{equation}\label{Equation:Bump_function_condition}
    \int_{I_i}\chi_i(t)\,dt=1,\quad\text{and}\quad\int_{I_i}\Big\lvert 1-\frac{\chi_i(t)}{N}\Big\rvert\,dt<\frac{1}{N^2}.
\end{equation}

\noindent Let $\widetilde{G}:[0,1]\times M\to\mathbb{R}$ be a smooth time-dependent Hamiltonian function defined as
\[\widetilde{G}(t,x):=\sum_{i=1}^N \chi_i(t)\,g_i(x).\]
Since $\int_{I_i}\chi_i(x)g_i(x)\,dt=g_i(x)$, the time--1 map produced by $\chi_ig_i$ on the interval $I_i$ equals $\phi^1_{g_i}$. Moreover, the time supports are disjoint so we have
\[\phi^1_{\widetilde{G}}=\prod_{i=1}^N\phi^1_{g_i}.\]
The generating Hamiltonian for the flow $(\phi^t_{\widetilde{G}})^{-1}\circ\phi^t_G$ is
\[K(t,x)=G(t,\phi^t_{\widetilde{G}}(x))-\widetilde{G}(t,\phi^t_{\widetilde{G}}(x)).\]
For $t\in I_i$ we have
\begin{equation}\label{ineq:one}
    \big\lvert G(t,x)-\widetilde{G}(t,x)\big\rvert=\big\lvert G(t,x)-\chi_i(t)\,g_i(x)\big\rvert\leq\big\lvert G(t,x)-Ng_i(x)\big\rvert+\big\lvert(N-\chi_i(t))g_i(x)\big\rvert.
\end{equation}
Set $C:=\sup_{(t,x)\in[0,1]\times M}|\partial_t G(t,x)|$. For all $t\in I_i$ we have
\begin{equation}\label{ineq:two}
    \big\lvert G(t,x)-Ng_i(x)\big\rvert=\Big\lvert N\int_{I_i}(G(t,x)-G(s,x))\,ds\Big\rvert\leq N\int_{I_i}C\cdot|t-s|ds\leq\frac{C}{N}.
\end{equation}
Set $C' := \max_{t \in [0,1]} \|G(t,\cdot)\|_{\infty}$. For all $t\in I_i$ we have
\begin{equation}\label{ineq:three}
    \big\lvert(N-\chi_i(t))\,g_i(x)\big\rvert= \Big\lvert N\int_{I_i}G(t,x)\Big\rvert\cdot\Big\lvert 1-\frac{1}{N}\,\chi_i(t)\Big\rvert\leq C'\cdot\Big\lvert 1-\frac{1}{N}\,\chi_i(t)\Big\rvert.
\end{equation}
Combining (\ref{Equation:Bump_function_condition}),(\ref{ineq:one}),(\ref{ineq:two}) and (\ref{ineq:three}) we get
\begin{equation}\label{ineq:L^(1,infty)bound}
    \|K\|_{L^{(1,\infty)}}=\int_{0}^{1}\|G(t,\cdot)-\widetilde{G}(t,\cdot)\|_{\infty}\,dt\leq\sum_{i=1}^{N}\int_{I_i}\left(\frac{C}{N}+C'\,\Big\lvert 1-\frac{\chi_i(t)}{N}\Big\rvert\right)\,dt\leq\frac{C+C'}{N}.
\end{equation}
    
\noindent Let $V \subset M$ be a subset of finite volume such that $\mathrm{supp}\,G(t,\cdot) \subset V$ for all $t \in [0,1]$. Then we also have $\mathrm{supp}\, K(t,\cdot)\subset V$, hence we obtain
\begin{equation}\label{ineq:L^(1,1)bound}
    \|K\|_{L^{(1,1)}}\leq\mathrm{Vol}(V)\cdot\|K\|_{L^{(1,\infty)}}\leq\frac{\mathrm{Vol}(V)\,(C+C')}{N}.
\end{equation}
Putting together (\ref{ineq:L^(1,infty)bound}) and (\ref{ineq:L^(1,1)bound}) and defining $c:=(\mathrm{Vol}(V)+1)(C+C')$ we get
\begin{equation}\label{Eq:Error_Bound}
    \vertiii{\phi^1_K}=\vertiii{(\phi^1_{\widetilde{G}})^{-1} \circ \phi^1_G}\leq\|K\|_{L^{(1,\infty)}}+\|K\|_{L^{(1,1)}}\leq\frac{c}{N}.
\end{equation}

\noindent The bound (\ref{ineq:two}) implies that for each $t\in I_i$ we have $|g_i(x)|<\frac{1}{N}\cdot|G(t,x)|+\frac{C}{N^2}$. In particular, $\|g_i\|_{\infty}\leq \frac{1}{N}\,\|G(t,\cdot)\|_{\infty}+\frac{C}{N^2}$ and therefore
\begin{equation}\label{Eq:Autonomous_Sum_Bound}
    \sum_{i=1}^N\|g_{i}\|_{\infty}=\sum_{i=1}^{N}\int_{I_i}N\,\|g_i\|_{\infty}\,dt\leq\sum_{i=1}^N\int_{I_{i}}\left(\|G(t,\cdot)\|_{\infty}+\frac{C}{N}\right)\,dt=\|G\|_{L^{(1,\infty)}}+\frac{C}{N}.
\end{equation}

\noindent Finally, combining (\ref{Eq:Error_Bound}), (\ref{Eq:Autonomous_Sum_Bound}), and Proposition~\ref{Prop:Autonomous_Bound}, we obtain
\[
\begin{aligned}
\vertiii{\phi^1_G}
&\leq \vertiii{\phi^1_{\widetilde{G}}} + \vertiii{(\phi^1_{\widetilde{G}})^{-1}\circ \phi^1_G}
= \vertiii{\prod_{i=1}^N \phi^1_{g_i}} + \vertiii{\phi^1_K} \\
&\leq \vertiii{\phi^1_K} + \sum_{i=1}^N \vertiii{\phi^1_{g_i}}
\leq \frac{c}{N} + \sum_{i=1}^{N}\|g_i\|_{\infty}
\leq \frac{c+C}{N} + \|G\|_{L^{(1,\infty)}}.
\end{aligned}
\]
where $C''=c+C$ depends only on $G$. By taking $N$ large enough we get the result.
\end{proof}

\begin{prop}[Autonomous Hamiltonian case]\label{Prop:Autonomous_Bound}
    Let $H\in C^{\infty}_{0,c}(M)$ be a zero-mean normalized autonomous Hamiltonian function. Then $\vertiii{\phi^1_H}\leq \|H\|_{\infty}$.
\end{prop}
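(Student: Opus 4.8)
The plan is to fix $\varepsilon>0$ and exhibit a compactly supported time-dependent Hamiltonian $G:[0,1]\times M\to\mathbb{R}$ with $\phi^1_G=\phi^1_H$ and $\int_0^1\bigl(\|G(t,\cdot)\|_\infty+\|G(t,\cdot)\|_{L^1}\bigr)\,dt\le\|H\|_\infty+\varepsilon$; letting $\varepsilon\to0$ then gives $\vertiii{\phi^1_H}\le\|H\|_\infty$. The autonomous flow of $H$ already realizes the $L^\infty$-part of this cost optimally — it contributes exactly $\|H\|_\infty$ to $\int_0^1\|\cdot\|_\infty\,dt$ — so the whole task is to arrange that the chosen path costs essentially nothing in the $L^1$-term, even though the flow of $H$ itself pays $\|H\|_{L^1}$ there. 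Two observations show this requires genuine work. First, one cannot win by a linear splitting $H=\sum_j L_j$, since $\sum_j\|L_j\|_{L^1}\ge\|H\|_{L^1}$; nor by conjugating or spreading $H$ into disjoint copies, since the $L^1$-norm of a Hamiltonian is invariant under pullback by symplectomorphisms, so the total ``mass'' $\|H\|_{L^1}$ is conserved. Second, therefore the only room for improvement is that the chosen path must be supported, at almost every time $t$, in a set of \emph{small volume} (so that $\|G(t,\cdot)\|_{L^1}$ is small there), which is possible only if one abandons the flow of $H$ and instead ``sweeps'' the effect of $\phi^1_H$ through a moving sliver. It is precisely here that the two hypotheses which are superfluous for the trivial bound enter — the normalization $\int_M H\,\omega^n=0$ (indeed indispensable even for the statement, since $\vertiii{\phi^1_H}\ge|\mathrm{Cal}(\phi^1_H)|=|\int H|$ can exceed $\|H\|_\infty$ when $\int H\neq0$) and $\mathrm{Vol}(M)=\infty$.

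Concretely, I would follow the Eliashberg--Polterovich / Ostrover--Wagner displacement mechanism. Since $\mathrm{supp}\,H$ has finite volume and $M$ is connected of infinite volume, choose a compactly supported Hamiltonian diffeomorphism $\psi$ whose forward iterates $\psi^k(\mathrm{supp}\,H)$, $k\ge0$, are pairwise disjoint. For any $F$ supported in $\mathrm{supp}\,H$ one has the identity $[\phi^1_F,\psi]=\phi^1_F\circ\bigl(\psi\,\phi^1_F\,\psi^{-1}\bigr)^{-1}=\phi^1_{F-F\circ\psi^{-1}}$, because $\mathrm{supp}\,F$ and $\psi(\mathrm{supp}\,F)$ are disjoint and the associated Hamiltonian flows commute; moreover any two Hamiltonians that are both functions of the same $H\circ\psi^{-k}$ Poisson-commute, so all the building blocks below mutually commute. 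Iterating, the telescoping $\sum_{k=0}^{N-1}\bigl(H\circ\psi^{-k}-H\circ\psi^{-(k+1)}\bigr)=H-H\circ\psi^{-N}$ (each summand automatically zero-mean, as $\int H=0$) realizes $\phi^1_{H-H\circ\psi^{-N}}$ through the diffeomorphism $\Psi_N=\prod_{k=0}^{N-1}\psi^k\phi^1_H\psi^{-k}$, and formally $\phi^1_H=[\Psi_\infty,\psi]$ with $\Psi_\infty=\prod_{k\ge0}\psi^k\phi^1_H\psi^{-k}$, which is self-similar under conjugation by $\psi$: $\psi\Psi_\infty\psi^{-1}=(\phi^1_H)^{-1}\Psi_\infty$. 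Estimating $\vertiii{[\Psi_\infty,\psi]}$ by a path that drives $\Psi_\infty$ to the identity along its own autonomous flow while holding $\psi$ fixed — whose generating Hamiltonian equals $H$ on $\mathrm{supp}\,H$ and is otherwise carried by the disjoint copies $\psi^k(\mathrm{supp}\,H)$ — and choosing $\psi$ so that $\mathrm{supp}\,H$ is displaced through a region of controlled geometry, one aims to keep the $L^\infty$-cost at $\|H\|_\infty+o(1)$ while driving the $L^1$-cost to $0$; the normalization $\int H=0$ is what allows the Calabi-type defects from truncation and regularization to be absorbed by a low-cost bump far away.

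The main obstacle is delicate and defeats every naïve implementation: truncating $\Psi_\infty$ at level $N$ leaves a remainder $\phi^1_{H\circ\psi^{-N}}$ that is \emph{conjugate} to $\phi^1_H$, hence carries the full seminorm, so the resulting inequality degenerates to $\vertiii{\phi^1_H}\le\vertiii{\phi^1_H}+o(1)$; whereas the untruncated product $\Psi_\infty$ has infinite $L^1$-norm, its Hamiltonian being a sum of infinitely many fixed-size copies of $H$ (and any ``correction'' term $\phi^1_{K-H}$ one is tempted to peel off turns out to cost at least $\|H\|_{L^1}$ itself, since the damping weights must run from $1$ to $0$ and so accumulate total variation $\ge1$). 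Reconciling these two failure modes is the heart of the proof — one must organize the construction so that a \emph{moving window} of boundedly many active copies travels along the path, the pieces remain mutually commuting through careful matching of supports, the path genuinely terminates at $\phi^1_H$ (not merely nearby), the $L^\infty$-cost closes exactly at $\|H\|_\infty$ rather than $2\|H\|_\infty$, and the $L^1$-cost tends to $0$. I expect verifying these last points, together with controlling the $C^\infty$-errors introduced by the regularizations, to be where essentially all the difficulty lies; this is the $\ker(\mathrm{Cal})$ analogue of the Eliashberg--Polterovich phenomenon that $\rho_{L^1}$ is degenerate, now sharpened to the optimal constant $\|H\|_\infty$.
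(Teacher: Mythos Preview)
Your proposal correctly identifies the central difficulty --- making the $L^1$-cost vanish while keeping the $L^\infty$-cost at exactly $\|H\|_\infty$ --- but you do not close the gap you yourself name. The displacement-to-infinity scheme runs into precisely the dichotomy you describe: truncating $\Psi_\infty$ at level $N$ leaves a remainder conjugate to $\phi^1_H$, so the estimate becomes vacuous; not truncating makes the $L^1$-norm infinite. Your proposed ``moving window'' is a specification of what a proof would need to achieve, not a construction that achieves it, and I do not see how to implement it along the lines you sketch while landing on the sharp constant $\|H\|_\infty$ rather than the $2\|H\|_\infty$ one typically gets from a commutator bound $\vertiii{[\Psi,\psi]}\le 2\vertiii{\psi}$.

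The paper takes a different and entirely \emph{local} route. It approximates $H$ by a ``step function'' $K=\sum_{i=0}^N c_i F_i$, where the $F_i$ are bump functions on many small disjoint cubes $Q_i\subset\mathcal{U}$ (a fixed bounded neighbourhood of $\mathrm{supp}\,H$), each $F_i$ is a pullback of a single $F_0$, and $c_i$ is the average of $H$ on $Q_i$. This yields $\|H-K\|_\infty\le\|H\|_\infty+\varepsilon$ and $\|H-K\|_{L^1}<\varepsilon$, which accounts for the $\|H\|_\infty$ term via $\vertiii{\phi^1_{\overline{K}\#H}}$. The remaining task, $\vertiii{\phi^1_K}<c\,\varepsilon$, is handled by Sikorav's trick: conjugating the commuting factors $\mathfrak{f}_i=\phi^{c_i}_{F_i}$ so that they all act on the single cube $Q_0$, they compose there to the time-$1$ map of $(\sum_i c_i)F_0$; the normalization $\int_M H\,\omega^n=0$ forces $\sum_i c_i\approx 0$, so this map has Hofer norm $<\varepsilon/2$. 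The conjugations themselves are by displacements of small cubes at Hofer cost $\lesssim\delta$ each, organised through chains of overlapping charts so that their total contribution is $O(\delta)$. Everything happens inside the bounded set $\mathcal{U}$, whence $\vertiii{\cdot}\le(1+\mathrm{Vol}(\mathcal{U}))\,\|\cdot\|_{\mathrm{Hofer}}$ on Hamiltonians supported there, and no appeal to $\mathrm{Vol}(M)=\infty$ is made --- contrary to your remark that infinite volume is one of the essential hypotheses at this step.
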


\begin{proof}
    Pick $\varepsilon > 0$ and apply Proposition \ref{Prop:Approximation_by_Step_function} to obtain a function $K \in C_c^{\infty}(M)$ with the listed properties. Then
    \begin{align*}
        \vertiii{\phi^1_H}
        &= \vertiii{\phi^1_K \circ (\phi^1_K)^{-1} \circ \phi^1_H}
        \leq \vertiii{\phi^1_K} + \vertiii{(\phi^1_K)^{-1} \circ \phi^1_H} \\
        &= \vertiii{\phi^1_K} + \vertiii{\phi^1_{\overline{K} \# H}}
        \leq \vertiii{\phi^1_K} + \|\overline{K} \# H\|_{L^{(1,\infty)}} + \|\overline{K} \# H\|_{L^{(1,1)}} \\
        &= \vertiii{\phi^1_K} + \|H-K\|_{\infty} + \int_{M} |H-K| \,\omega^n
        \leq \|H\|_{\infty} + (c+2)\,\varepsilon,
    \end{align*}
    where the constants $c$ depends on $H$. Letting $\varepsilon \to 0$ yields the desired inequality.
\end{proof}

\begin{prop}\label{Prop:Approximation_by_Step_function}
    Let $\varepsilon>0$ and $H\in C^{\infty}_{0,c}(M)$. There exists $K\in C^{\infty}_c(M)$ such that
    \begin{enumerate}
        \item $\|H-K\|_{\infty}\leq \|H\|_{\infty}+\varepsilon$,
        \item $\int_{M}|H-K|\,\omega^n<\varepsilon$,
        \item $\vertiii{\phi^1_K}<c\cdot\varepsilon$,
    \end{enumerate}
    where $c>0$ is a constant that depends only on $H$ (and not on $K$).
\end{prop}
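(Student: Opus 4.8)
The plan is to take for $K$ a smoothed finite‑valued function that $L^1$‑approximates $H$ and whose time‑$1$ map, being concentrated on a thin ``skeleton'', can be regenerated by an arbitrarily $\|\cdot\|$‑small Hamiltonian. As in the proof of Proposition~\ref{PropositionL1-Splitting}, a $C^0$‑small perturbation reduces us to the case where every non‑zero level set of $H$ has measure zero, and we may assume $H\not\equiv 0$. Fix a small parameter $\delta>0$, to be chosen at the very end in terms of $\varepsilon$ and $H$. Cover $\mathrm{supp}\,H$ by finitely many small Darboux balls; by the colouring argument of Proposition~\ref{Proposition:Darboux_Balls_Cover} they split into a bounded number ($\le 100^n$) of pairwise‑disjoint families. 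On each ball one defines $K$ to be locally constant, equal to the local average of $H$ on a slightly shrunk concentric ball, interpolated down to $0$ on the surrounding spherical collar through a profile that is a function of $|x-q|^2$ alone in the chart (so that $K\in C^\infty_c(M)$ with $\mathrm{supp}\,K\subset\mathrm{supp}\,H$). Because the balls are small and the heights track $H$, one gets at once $\|H-K\|_\infty\le\|H\|_\infty+\delta$ (on the collars one only uses $|K|\le\|H\|_\infty$ and on the bulk $|H-K|\le\delta$) and $\int_M|H-K|\,\omega^n\le C(H)\,\delta<\varepsilon$; this settles conditions (1) and (2), and the ``step function'' requirement holds by construction.

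The real content is condition (3). Since the $B_i$ are pairwise disjoint within each colour class, and within a ball the collars can be taken disjoint, $\phi^1_K$ is a commuting product of radial ``localized rotations'', one per collar; each such factor, restricted to a collar $\{r_0<|x-q|<r_1\}$, rotates the sphere $\{|x-q|^2=s\}$ by $2\kappa'(s)$, where $\kappa$ is the transition profile. The key observation is that these transition surfaces are round spheres, whose characteristic foliation is the Hopf circle action and hence is \emph{periodic}. One therefore tunes the collar radii so that the total twist across each collar is an integer multiple of the Hopf period; since a rotation by a full period is the identity, the factor $\phi^1_{K_i}$ is also the time‑$1$ map of the ``rebased'' Hamiltonian obtained by subtracting from $\kappa(|x-q|^2)$ the corresponding linear‑in‑$|x-q|^2$ function. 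That rebased generator is genuinely compactly supported in the collar (it vanishes on both its boundary spheres) and has sup‑norm $O(\delta)$ — it is just the deviation of $\kappa$ from the linear interpolation, which is non‑zero only on the thin smoothing regions of the profile.

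Collecting these rebased generators colour by colour, so that within a colour class the supports are disjoint and their sup‑norms do \emph{not} add, and using the constant‑in‑time path in the definition of $\vertiii{\,\cdot\,}$, one obtains
\[
\vertiii{\phi^1_K}\ \le\ 100^n\Bigl(O(\delta)+O(\delta)\cdot\mathrm{Vol}(\mathrm{supp}\,K)\Bigr)\ \le\ c(H)\,\delta ,
\]
which for $\delta$ small enough is exactly condition (3), with a constant $c$ depending only on $H$ (through $\mathrm{Vol}(\mathrm{supp}\,H)$ and $100^n$). Note that the decomposition $\phi^1_K=\prod_i\phi^1_{K_i}$ does \emph{not} help if one bounds it by $\sum_i\vertiii{\phi^1_{K_i}}$, since each factor has Calabi $\approx\int_{B_i}H\,\omega^n$ and hence $\vertiii{\,\cdot\,}\ge|\mathrm{Cal}|$; the gain comes precisely from replacing each factor by a small, compactly supported generator and then summing only within colour classes.

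The genuine obstacle I expect is reconciling this rebasing with the $L^1$‑approximation: the tuning above forces each transition collar to have radial width proportional to the size of the height‑jump it carries, which conflicts with wanting the balls uniformly small. Resolving this requires a multi‑scale construction — splitting a large interpolation $\overline{H_i}\rightsquigarrow 0$ into a chain of small‑jump steps carried by a family of tiny auxiliary balls (placed in the ambient, using that $M$ has infinite volume) whose total volume is negligible — together with a careful bookkeeping of the parameters (ball radii, collar widths, jump sizes, smoothing widths) so that none of the error terms in (1)–(3) is spoiled. Verifying that all the ``full Hopf twist'' corrections are literally the identity, and that the chains can be routed disjointly, is where the real work lies.
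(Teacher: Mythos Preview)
Your approach has a gap that the Calabi homomorphism exposes as unfixable. The rebased generator $g_i=(\kappa_i-\ell_i)(|x-q_i|^2)$, extended by zero off the collar, is only Lipschitz: at the inner boundary $s=s_0$ one has $g_i'(s_0^-)=0$ while $g_i'(s_0^+)=\kappa_i'(s_0)-\ell_i'(s_0)=0-(-\pi k)=\pi k\neq 0$. This is not a smoothing technicality. Any \emph{smooth} compactly supported $G_i$ with $\phi^1_{G_i}=\phi^1_{K|_{B_i}}$ must satisfy $\int_M G_i=\mathrm{Cal}\bigl(\phi^1_{K|_{B_i}}\bigr)=\int_{B_i}K$, and if in addition $\mathrm{supp}\,G_i\subset B_i$ (which your colour-class disjointness requires), then
\[
\|G_i\|_\infty\ \ge\ \frac{|\int_{B_i}K|}{\mathrm{Vol}(B_i)}\ \approx\ |c_i|,
\]
independently of how the collar is shaped or how thin the smoothing regions are. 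Hence $\max_i\|G_i\|_\infty\gtrsim\|H\|_\infty$, not $O(\delta)$, and the displayed bound for $\vertiii{\phi^1_K}$ collapses. Your proposed multi-scale fix (chaining small height-jumps through tiny auxiliary balls) does not escape this either: the Calabi invariants of the factors add up and the total is conserved. In short, any attempt to regenerate $\phi^1_K$ as a product of pieces each supported in its own ball runs headlong into exactly the Calabi lower bound you yourself flagged one paragraph earlier; there is no ``small, compactly supported generator'' to substitute in.

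The paper's argument avoids ball-by-ball localization entirely. One sets $K=\sum_{i=0}^N c_i\,(F_0\circ\phi_i^{-1})$ for a fixed bump $F_0$ on a single tiny domain $Q_0$, with symplectic identifications $\phi_i:Q_0\to Q_i$ to domains nearly tiling a bounded $\mathcal{U}\supset\mathrm{supp}\,H$, and $c_i$ the average of $H$ over $Q_i$. Conditions (1) and (2) follow as in your sketch. For (3), the key step (Claim~\ref{Claim:Sikorav_trick}, Sikorav's trick) uses commutator identities with ``shuffle'' maps $\Psi,\Psi'\in\mathrm{Ham}_c(\mathcal{U})$ of Hofer norm $<\delta$, each swapping $Q_{2i}\leftrightarrow Q_{2i\pm1}$, to show $d_{\mathrm{Hofer}}(\phi^1_K,\Phi')\le 6\delta$, where $\Phi'$ is generated by $\bigl(\sum_i c_i\bigr)F_0$ on the \emph{single} domain $Q_0$. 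Now the zero-mean hypothesis $\int_M H\,\omega^n=0$ forces $\sum_i c_i$ to be small, hence $\|\Phi'\|_{\mathrm{Hofer}}$ is small. The individual contributions $c_i$ are never confronted in isolation; they are telescoped into $Q_0$ and cancel globally, which is precisely what your local scheme cannot arrange.
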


\subsection{Proof of Proposition \ref{Prop:Approximation_by_Step_function}}

\noindent Let $\delta>0$ be sufficiently small, and let $\mathcal{U}\subset M$ be a bounded and connected open subset such that $\mathrm{supp}\,H\subset \mathcal{U}$.
    
\begin{claim}\label{Claim:Setup_for_Sikorav_trick}
    There exists an integer $N=N(\delta)\in\mathbb{N}$ and three finite families of pairwise disjoint open subsets of $M$, namely
    \[
    \mathcal{Q}=\{Q_0,Q_1,\dots,Q_N\},\qquad
    \mathcal{R}=\{R_0,R_1,\dots,R_{L}\},\qquad
    \mathcal{R}'=\{R'_1,R'_2,\dots,R'_{L'}\},
    \]
    where $L=\big\lfloor\frac{N}{2}\big\rfloor$ and $L'=\big\lfloor\frac{N-1}{2}\big\rfloor$, such that the following properties hold:
    \begin{enumerate}[label=(\roman*)]
        \item For every $V \in \mathcal{Q} \cup \mathcal{R} \cup \mathcal{R}'$, the closure $\overline{V}$ is contained in $\mathcal{U}$ and is homeomorphic to the standard closed Euclidean ball.
        \item The diameter of each set in $\mathcal{Q}$ is at most $\delta$.
        \item $(\forall\,0\leq i\leq N)$ there exists a symplectic diffeomorphism $\phi_i:Q_0\to Q_i$.
        \item $(\forall\, 0\leq i\leq L)\,\overline{(Q_{2i}\cup Q_{2i+1})}\subset R_i$ and there exists a Hamiltonian diffeomorphism $\Psi_i\in\mathrm{Ham}_c(R_i)$ such that
        \[\Psi_i\circ\phi_{2i}=\phi_{2i+1},\]
        and $\Psi_i$ is generated by a normalized Hamiltonian function compactly supported in $R_i$ of $L^{(1,\infty)}$-norm less than $\delta$.
        \item $(\forall\, 1\leq i\leq L)\,\overline{(Q_{2i-1}\cup Q_{2i})}\subset R'_i$ and there exists a Hamiltonian diffeomorphism $\Psi'_i\in\mathrm{Ham}_c(R'_i)$ such that
        \[\Psi'_i\circ\phi_{2i-1}=\phi_{2i},\]
        and $\Psi'_i$ is generated by a normalized Hamiltonian function compactly supported in $R'_i$ of $L^{(1,\infty)}$-norm less than $\delta$.
        \item As $\delta \to 0$, the disjoint union of the sets in $\mathcal{Q}$ fills up the volume of $\mathcal{U}$.
    \end{enumerate}
\end{claim}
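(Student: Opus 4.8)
The plan is to construct everything explicitly inside fine Darboux cubes and to link consecutive cubes by cheap ``shuffling'' isotopies. First I would reduce to coordinates: fix a bounded open set $\mathcal V$ with $\mathrm{supp}\,H\subset\mathcal V$ and $\overline{\mathcal V}\subset\mathcal U$, cover $\overline{\mathcal V}$ by finitely many Darboux charts, and apply Lemma~\ref{Lemma:transition_charts_lemma} so that on overlaps the transition maps restrict to the identity on open subsets; then a standard Euclidean cube sitting in a chart overlap can be carried to a standard cube of an adjacent chart by a Hamiltonian diffeomorphism (a translation composed with the identity transition), so all constructions below extend across charts. Next, fix a grid of Euclidean cubes of side $s=s(\delta)$ with $s\sqrt{2n}<\delta$, and let $\mathcal Q$ consist of the open cubes of this grid that lie in $\mathcal U$ (keeping only one representative near a chart overlap). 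These are pairwise disjoint, have diameter $<\delta$, and exhaust $\mathcal U$ up to a boundary layer of volume $O(s)\to 0$, which already accounts for (i), (ii) and (vi); since all of them are mutual translates in the charts they are symplectomorphic, which will yield (iii) once an ordering is fixed.

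The combinatorial core is enumerating $\mathcal Q=\{Q_0,\dots,Q_N\}$ so that each consecutive pair $Q_j,Q_{j+1}$ lies in a common region that is small (or thin) enough to perform an inexpensive shuffle, and so that the two resulting families of linking regions can each be kept pairwise disjoint. I would obtain the ordering by passing to a coarser grid, choosing a spanning tree $T$ of the (connected) coarse cube--adjacency graph, and routing a Hamilton--path--like traversal of the fine cubes that follows a depth-first traversal of $T$: each coarse cube is visited a bounded, dimension-dependent number of times, and since it contains many fine sub-cubes they can all be covered by that many disjoint ``snakes'', one per incident tree edge, entering and leaving through the relevant faces. Concatenating produces an ordering in which consecutive cubes are grid-adjacent (or joined by a short controlled arc), and the alternation into even pairs $(Q_{2i},Q_{2i+1})$ and odd pairs $(Q_{2i-1},Q_{2i})$ is what allows the corresponding linking regions to be threaded, respectively, through the gaps left by the other alternate family.

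For the linking maps: given a consecutive pair of nearby cubes, a translation-type Hamiltonian isotopy supported in a region $R$ (respectively $R'$) slightly larger than the union of the two cubes carries one onto the other; because both the displacement and the diameter of the support are of order $s\lesssim\delta$, the generating Hamiltonian (a linear function cut off to the support) has $L^{(1,\infty)}$-norm $<\delta$, and subtracting a tiny bump supported in the same region normalizes it without spoiling this bound. Setting $\phi_0=\mathrm{id}$ and defining $\phi_{j+1}:=\Theta_j\circ\phi_j$, where $\Theta_j$ is the $j$-th shuffle, makes (iii) hold and forces the relations $\Psi_i\circ\phi_{2i}=\phi_{2i+1}$ and $\Psi'_i\circ\phi_{2i-1}=\phi_{2i}$ in (iv), (v) automatically; one then checks that each $R_i,R'_i$ can be chosen with closure in $\mathcal U$ and homeomorphic to a ball.

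The hard part is the interplay between property (vi) and the disjointness of $\mathcal R$ and $\mathcal R'$: one must order \emph{all} the fine cubes, so that their union really does fill $\mathcal U$, while still being able to choose the two families of linking regions pairwise disjoint, with closures inside $\mathcal U$, and carrying normalized Hamiltonians of $L^{(1,\infty)}$-norm below $\delta$. This is what dictates the dimension-sensitive design of the chain and of the regions $R_i,R'_i$ (in particular, exploiting that there are $2n\ge 2$ directions in which to route the linking regions of one family through the complement of the other), and it is where essentially all the effort goes; the symplectic ingredients --- Moser's trick for the chart transitions and linear Hamiltonians for translations --- are comparatively routine.
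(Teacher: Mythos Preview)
Your outline is essentially the paper's strategy: Darboux charts aligned via Lemma~\ref{Lemma:transition_charts_lemma}, a two-scale grid of standard cubes, an enumeration making consecutive cubes adjacent, and translation-type isotopies in small regions as the links. The differences are in how the ``hard part'' you correctly flag is actually dispatched.

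The paper resolves the disjointness of $\mathcal R$ and $\mathcal R'$ with two concrete devices you do not quite name. First, the coarse cubes $\widetilde Q_1,\dots,\widetilde Q_{|\mathcal V|}$ (side independent of $\delta$) are ordered by a Hamiltonian path in a chart-adjacency graph, and consecutive coarse cubes are joined by pairwise disjoint embedded arcs $\gamma_k$ lying in the complement of all the cubes; the linking regions for the ``jump'' steps are then thin tubular neighbourhoods of these arcs, and their disjointness is inherited from that of the $\gamma_k$. Second, and this is the point your sketch misses, each coarse cube is subdivided into a snake of fine cubes that are \emph{slightly shrunk} from the exact grid. The resulting gaps mean that when $Q_{2i},Q_{2i+1}$ lie in the same coarse cube (so they shared a face before shrinking), the rectangle $R_i$ containing $\overline{Q_{2i}\cup Q_{2i+1}}$ can itself be taken strictly inside the corresponding double cell, hence disjoint from $R_{i\pm 1}$; no routing argument using the ambient $2n$ directions is needed. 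Your spanning-tree DFS would also work for the ordering, but without the shrinking trick your linking rectangles for adjacent even pairs would share faces and fail to be disjoint open sets.
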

    
\noindent Let $F_0\in C_c^{\infty}(Q_0)$ be a function satisfying
\[0\leq F_0\leq 1+\delta,\quad\text{and}\quad\int_{Q_0} F_0 \,\omega^n = \mathrm{Vol}(Q_0).\]
For each $0\leq i\leq N$ define the real number $c_i$ and pick a point $a_i\in Q_i$ by
\[c_i = \frac{1}{\mathrm{Vol}(Q_i)}\int_{Q_i} H \,\omega^n = H(a_i).\]
The existence of $a_i\in Q_i$ follows from the continuity of $H$ and the intermediate value theorem. Finally, we define the function $K$ by
\begin{equation}\label{Equation:step_function_K}
    K = \sum_{i=0}^N c_i\,F_i=\sum_{i=0}^N c_i \cdot (F_0 \circ \phi_i^{-1}),
\end{equation}
where $F_i := F_0 \circ \phi_i^{-1}$. Each $F_i$ is supported in $Q_i$, hence $\mathrm{supp}(K) \subset \bigsqcup_{i=0}^N Q_i$.

\begin{claim}
    If $\delta>0$ is sufficiently small, then 
    \[\|H-K\|_{\infty} \leq \|H\|_{\infty} + \varepsilon,\qquad\int_{M} |H-K| \,\omega^n < \varepsilon.\]
\end{claim}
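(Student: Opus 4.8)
The plan is to estimate $\|H-K\|_\infty$ and $\int_M|H-K|\,\omega^n$ by exploiting that $K$ is, on each cube $Q_i$, a good pointwise approximation of $H$, while outside $\bigsqcup_i Q_i$ we have $K=0$ so the error is controlled by $\|H\|_\infty$ on a set of small volume. Concretely, I would split $M$ into three regions: (a) the union $\bigsqcup_{i=0}^N Q_i$, (b) the part of $\mathrm{supp}\,H$ not covered by the cubes, and (c) the region where both $H$ and $K$ vanish.

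First I would handle region (a). On each $Q_i$, $K$ equals $c_i\cdot F_i$ where $0\le F_i\le 1+\delta$ and $c_i=H(a_i)$ for some $a_i\in Q_i$. Since $Q_i$ has diameter at most $\delta$ and $H$ is uniformly continuous on the compact set $\overline{\mathcal U}$, for $\delta$ small we get $|H(x)-c_i|<\varepsilon/4$ for all $x\in Q_i$; hence $|H(x)-K(x)|\le |H(x)-c_i|+|c_i|\cdot|1-F_i(x)|+|c_i|\cdot|F_i(x)-1|\le \varepsilon/4 + \|H\|_\infty\cdot\delta$ — wait, more carefully: $|H-K|\le|H-c_i|+|c_i||1-F_i|+\dots$; since $0\le F_i\le 1+\delta$ we have $|c_i F_i|\le\|H\|_\infty(1+\delta)$ but we want a bound of the form $\|H\|_\infty+\varepsilon$. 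The cleanest route: $|H(x)-K(x)| = |H(x)-c_iF_i(x)|$. If $F_i(x)\le 1$ then this is at most $|H(x)| + \dots$ no. Better: write $|H - c_iF_i| \le |H - c_i| + |c_i|\,|1 - F_i| \le \varepsilon/4 + \|H\|_\infty\cdot(1+\delta)$ is too weak. Instead note $c_iF_i$ lies between $0$ and $c_i(1+\delta)$ (if $c_i\ge0$) or between $c_i(1+\delta)$ and $0$ (if $c_i<0$); in either case $|H - c_iF_i|\le \max(|H-c_i|, |H| + \delta|c_i|) \le \varepsilon/4 + \delta\|H\|_\infty$ once $|H-c_i|<\varepsilon/4$. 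Choosing $\delta<\varepsilon/(4\|H\|_\infty)$ (if $H\ne 0$; the case $H\equiv0$ is trivial) gives $|H-K|<\varepsilon/2$ pointwise on region (a). For the $L^1$ estimate over (a): $\int_{\bigsqcup Q_i}|H-K|\,\omega^n < (\varepsilon/2)\,\mathrm{Vol}(\mathcal U)$, which is not yet $<\varepsilon$, so I should instead choose $\delta$ small enough (depending on $\mathrm{Vol}(\mathcal U)$, which depends only on $H$) that the pointwise error on (a) is below $\varepsilon/(2\,\mathrm{Vol}(\mathcal U))$ as well — both the $C^0$ and $L^1$ requirements are met by taking $\delta$ sufficiently small.

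Next, region (b): the set $E_\delta := \mathrm{supp}\,H \setminus \bigsqcup_{i=0}^N Q_i \subset \mathcal{U}\setminus\bigsqcup Q_i$. Here $K=0$, so $|H-K|=|H|\le\|H\|_\infty$, which already gives the $C^0$ bound $\|H-K\|_\infty\le\|H\|_\infty$ on this piece (well within $\|H\|_\infty+\varepsilon$). For the $L^1$ contribution, property (vi) of Claim~\ref{Claim:Setup_for_Sikorav_trick} says the cubes $\mathcal Q$ fill up the volume of $\mathcal U$ as $\delta\to0$, hence $\mathrm{Vol}(E_\delta)\le\mathrm{Vol}(\mathcal U\setminus\bigsqcup Q_i)\to 0$, so for $\delta$ small $\int_{E_\delta}|H|\,\omega^n\le\|H\|_\infty\cdot\mathrm{Vol}(E_\delta)<\varepsilon/2$. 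Finally, region (c): on $M\setminus(\mathrm{supp}\,H\cup\bigsqcup Q_i)$ both functions vanish, contributing nothing. Combining the three regions: $\|H-K\|_\infty\le\max(\varepsilon/2, \|H\|_\infty)\le\|H\|_\infty+\varepsilon$, and $\int_M|H-K|\,\omega^n < \varepsilon/2+\varepsilon/2 = \varepsilon$, for $\delta$ sufficiently small. The only real subtlety — and the step I expect to need the most care — is making sure a single choice of $\delta$ (depending only on $H$, $\mathcal U$, and $\varepsilon$, not on $K$) simultaneously makes the uniform-continuity modulus small, the cube-diameter contribution $\delta\|H\|_\infty$ small, and the leftover volume $\mathrm{Vol}(\mathcal U\setminus\bigsqcup Q_i)$ small; since there are only finitely many smallness conditions and each is achieved for all sufficiently small $\delta$ by uniform continuity of $H$ and property (vi), this is routine, and I would simply fix $\delta$ at the end to satisfy all of them at once.
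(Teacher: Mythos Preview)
Your argument contains a genuine error in the treatment of region~(a). You claim that on each cube $Q_i$ one has $|H-K|<\varepsilon/2$ pointwise (and then sharpen this to $\varepsilon/(2\,\mathrm{Vol}(\mathcal U))$ for the $L^1$ estimate). This is false. Recall that $F_0\in C^\infty_c(Q_0)$, so $F_i=F_0\circ\phi_i^{-1}$ is compactly supported in $Q_i$ and therefore vanishes near $\partial Q_i$. At any such point $x\in Q_i$ we have $K(x)=c_iF_i(x)=0$, hence $|H(x)-K(x)|=|H(x)|\approx |c_i|$, which can be as large as $\|H\|_\infty$. Your displayed inequality $|H-c_iF_i|\le\max(|H-c_i|,\,|H|+\delta|c_i|)\le\varepsilon/4+\delta\|H\|_\infty$ is simply wrong: the term $|H|$ (which appears inevitably once $c_iF_i$ ranges down to $0$) is not bounded by $\varepsilon/4+\delta\|H\|_\infty$.

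This does not affect the $L^\infty$ bound, since $|H-K|\le |H|+O(\delta)\le\|H\|_\infty+\varepsilon$ on the cubes is all that is needed there, and your treatment of regions~(b) and~(c) is fine. But your $L^1$ argument collapses: integrating the (incorrect) pointwise bound over $\bigsqcup_i Q_i$ is what you rely on, and no correct pointwise bound on $|H-K|$ over $Q_i$ is small. The paper's remedy is to abandon the pointwise estimate and instead exploit the integral normalization $\int_{Q_i}F_i\,\omega^n=\mathrm{Vol}(Q_i)$. Writing
\[
|H-c_iF_i|\le |H-c_i(1+\delta)|+|c_i|\bigl((1+\delta)-F_i\bigr),
\]
the first term is $O(\delta)$ pointwise, while the second term, though not pointwise small, has integral $|c_i|\cdot\delta\,\mathrm{Vol}(Q_i)$ over $Q_i$. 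Summing over $i$ gives $\int_{\bigsqcup Q_i}|H-K|\,\omega^n=O(\delta)\cdot\mathrm{Vol}(\mathcal U)$, which combined with your (correct) estimate on region~(b) yields the $L^1$ bound.
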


\begin{proof}
    Let $x \in Q_i$, and let $\gamma:[0,1]\to M$ be a smooth path of length at most $\delta$ connecting $a_i \in Q_i$ to $x$ (such a path exists because the diameter of $Q_i$ is bounded by $\delta$). Define
    \[
        C := \sup_{y\in\mathcal{U}} |dH(y)| < +\infty,
    \]
    where $|dH(y)|$ denotes the operator norm of the covector $dH(y)$. The finiteness of $C$ follows from the fact that $H$ has compact support. Then
    \begin{equation}\label{Eq:L_infty_Bound_H}
        |H(x)-c_i|=|H(\gamma(1))-H(\gamma(0))|=\Big\lvert\int_{0}^1 dH(\gamma'(t))\,dt\Big\rvert< C\cdot\mathrm{length}(\gamma)\leq C\delta.
    \end{equation}
    On the other hand, since $0 \leq F_0 \leq 1+\delta$, it follows that for each $x \in Q_i$ we have
    \begin{equation}\label{Eq:L_infty_Bound_F}
    \begin{cases}
        0 \leq c_i\,F_i(x) \leq c_i + \delta c_i, & \text{if } c_i \geq 0, \\[4pt]
        c_i + \delta c_i \leq c_i \, F_i(x) \leq 0, & \text{if } c_i < 0.
    \end{cases}
    \end{equation}
    \noindent Combining (\ref{Eq:L_infty_Bound_H}) and (\ref{Eq:L_infty_Bound_F}), we conclude that for each $x \in Q_i$ we have
    \begin{equation}\label{Eq:L_infty_Bound_(H-K)}
        \begin{cases}
            -(c_i+C)\delta\leq  H(x)-c_i\,F_i(x) \leq c_i+\delta C, & \text{if } c_i \geq 0, \\[4pt]
            c_i - \delta C \leq H(x)-c_i\,F_i(x) \leq \delta(C-c_i), & \text{if } c_i < 0.
        \end{cases}
    \end{equation}
    \noindent By choosing $\delta>0$ sufficiently small, equations (\ref{Eq:L_infty_Bound_H}) and (\ref{Eq:L_infty_Bound_(H-K)}) imply that
    \[|H(x)-K(x)| < |H(x)| + \varepsilon,\]
    as desired. For the second bound, we define 
    \[V=\mathcal{U}\setminus\bigsqcup_{i=0}^N Q_i,\]
    and note that $\mathrm{Vol}(V)\to 0$ as $\delta\to 0$. Now we can write
    \[\int_{M}|H-K|\,\omega^n=\int_{V}|H|\,\omega^n+\sum_{i=0}^N\int_{Q_i}|H-c_i\,F_i|\,\omega^n\]
    \[\leq\int_{V}|H|\,\omega^n+\sum_{i=0}^N\int_{Q_i}|H-c_i(1+\delta)|\,\omega^n+\sum_{i=0}^N|c_i|\cdot\int_{Q_i}(1+\delta-F_i)\,\omega^n\]
    We now bound each summand. Let $C' := \max |H| < +\infty$. Then:  
    \begin{itemize}
        \item $\displaystyle \int_{V} |H| \,\omega^n \leq \mathrm{Vol}(V)\cdot C'$, which approaches $0$ as $\delta \to 0$.
        \item From (\ref{Eq:L_infty_Bound_H}) we have
        \[
        -(C+c_i)\delta \leq H(x) - c_i(1+\delta) \leq \delta(C-c_i).
        \]
        Since $|c_i| = |H(a_i)| \leq C'$, it follows that
        \[
        |H - c_i(1+\delta)| \leq (C+C')\delta,
        \]
        and therefore
        \[
        \int_{Q_i} |H - c_i(1+\delta)| \,\omega^n
        \leq \delta(C+C') \cdot \mathrm{Vol}(Q_i).
        \]
        \item Since $\int_{Q_i} F_i \,\omega^n = \mathrm{Vol}(Q_i)$ and $|c_i| < C'$, we obtain
        \[
        |c_i| \int_{Q_i} (1+\delta - F_i)\,\omega^n
        \leq \delta C' \cdot \mathrm{Vol}(Q_i).
        \]
    \end{itemize}
    Combining these estimates yields
    \[
        \int_{M} |H-K| \,\omega^n
        \leq \mathrm{Vol}(V)\cdot C' + \delta \cdot \mathrm{Vol}(\mathcal{U})(C+2C'),
    \]
    which tends to $0$ as $\delta \to 0$.
\end{proof}
    
\noindent It remains to prove that $\vertiii{\phi^1_K}\leq\varepsilon$, where $c>0$ is a constant that depends only on $H$. The following claim is essentially due to Sikorav (see Section 8.4 in \cite{Si90}); however, the proof presented here is almost entirely adapted from Lemma 2.1 in \cite{Bu23}.

\begin{claim}\label{Claim:Sikorav_trick}
    If $\delta>0$ is small enough, the Hamiltonian diffeomorphism $\phi^1_K$ (where $K$ is defined by (\ref{Equation:step_function_K})) can be generated by an autonomous Hamiltonian $K'$ supported in $\mathcal{U}$ such that $\|K'\|_{L^{(1,\infty)}}<\varepsilon$.
\end{claim}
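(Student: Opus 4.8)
The plan is to prove the claim by the commutator (``Sikorav'') trick. Although $\phi^1_K=\prod_{i=0}^{N}\phi^{c_i}_{F_i}$ is a product of $N+1$ autonomous flows whose individual energies are comparable to $\|H\|_\infty$ and therefore not small, the whole product can be written as a commutator $[\widehat\Psi,g]=\widehat\Psi\,g\,\widehat\Psi^{-1}g^{-1}$, where $g$ is a Hamiltonian \emph{cyclic shift} of the cubes $Q_0,\dots,Q_N$ assembled out of the families $\mathcal R,\mathcal R'$, and $\widehat\Psi$ is an ``antiderivative'' built from the partial sums $s_i:=c_0+\dots+c_i$. Because both the $L^{(1,\infty)}$-norm of a generating Hamiltonian and $\vertiii{\,\cdot\,}$ are invariant under conjugation, the (huge) diffeomorphism $\widehat\Psi$ contributes nothing, and the size of $\phi^1_K$ is governed by that of $g$ alone, which will be $O(\delta)$. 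First a normalization: replacing each $c_i$ by $c_i-\bar c$ with $\bar c:=\frac1{N+1}\sum_i c_i$ we may assume $\sum_{i=0}^N c_i=0$; since $(N+1)\,\mathrm{Vol}(Q_0)=\mathrm{Vol}\big(\bigsqcup_i Q_i\big)\to\mathrm{Vol}(\mathcal U)$ while $\sum_i c_i\,\mathrm{Vol}(Q_i)=\int_{\bigsqcup_i Q_i}H\,\omega^n\to\int_{\mathcal U}H\,\omega^n=0$ as $\delta\to0$, we have $|\bar c|\to0$, so this changes $K$ by less than $|\bar c|(1+\delta)$ pointwise and by less than $|\bar c|\,\mathrm{Vol}(\mathcal U)$ in $L^1$, which is harmlessly absorbed into the $C^0$- and $L^1$-estimates already proved, for $\delta$ small.

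Next I build $g$. We may take each $\Psi_i$ (resp.\ $\Psi'_i$) to \emph{interchange} the two cubes it straddles (a harmless strengthening of Claim~\ref{Claim:Setup_for_Sikorav_trick}; alternatively, argue directly with the permutation of $\{Q_i\}$ it induces). Put $\sigma_{\mathrm e}:=\prod_i\Psi_i$ and $\sigma_{\mathrm o}:=\prod_i\Psi'_i$. As the $R_i$ are pairwise disjoint and the $R'_i$ are pairwise disjoint, reparametrizing each constituent Hamiltonian to have time-constant sup norm (and perturbing slightly if it vanishes in time, neither of which affects the time-$1$ map) gives generators $\widehat\sigma_{\mathrm e},\widehat\sigma_{\mathrm o}$ supported in $\mathcal U$ with $\|\widehat\sigma_{\mathrm e}\|_{L^{(1,\infty)}},\|\widehat\sigma_{\mathrm o}\|_{L^{(1,\infty)}}<\delta$; here disjointness is what replaces a sum of $L^{(1,\infty)}$-norms by their maximum. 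Now $g:=\sigma_{\mathrm o}\circ\sigma_{\mathrm e}$: composing the two edge-matchings of the path $Q_0-Q_1-\dots-Q_N$ yields a \emph{single} cyclic permutation of $\{Q_0,\dots,Q_N\}$, so after relabelling the cubes we may assume $g(Q_i)=Q_{i+1}$ for all $i$ (indices mod $N+1$). Choosing the symplectomorphisms of property (iii) to be $\phi_i:=g^{i}|_{Q_0}$ makes $F_i=F_0\circ g^{-i}$ and hence $g\,\phi^{t}_{F_i}\,g^{-1}=\phi^{t}_{F_{i+1}}$ for every $t$; this choice does not affect the earlier estimates, which used only $\|F_i\|_\infty\le 1+\delta$, $\int_{Q_i}F_i=\mathrm{Vol}(Q_i)$ and $c_i=H(a_i)$. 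Concatenating $\widehat\sigma_{\mathrm e}$ and $\widehat\sigma_{\mathrm o}$ gives a generator $\widehat g$ of $g$ with $\mathrm{supp}\,\widehat g\subset\mathcal U$ and $\|\widehat g\|_{L^{(1,\infty)}}<2\delta$.

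Now set $\widehat\Psi:=\prod_{i=0}^N\phi^{s_i}_{F_i}$, supported in $\bigsqcup_i Q_i\subset\mathcal U$ (here $s_N=0$; use the convention $s_{-1}:=s_N=0$). Using that the $\phi^{s_i}_{F_i}$ pairwise commute (disjoint supports), that $g$ conjugates $\phi^{s_i}_{F_i}$ to $\phi^{s_i}_{F_{i+1}}$, and that the $i=N$ term drops out since $s_N=0$, a direct computation gives
\[
\widehat\Psi\cdot\big(g\,\widehat\Psi\,g^{-1}\big)^{-1}=\prod_{i=0}^N\phi^{\,s_i-s_{i-1}}_{F_i}=\prod_{i=0}^N\phi^{c_i}_{F_i}=\phi^1_K,
\]
that is, $\phi^1_K=\widehat\Psi\,g\,\widehat\Psi^{-1}g^{-1}=[\widehat\Psi,g]$. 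Writing $[\widehat\Psi,g]=(\widehat\Psi g\widehat\Psi^{-1})\cdot g^{-1}$, and noting $\widehat\Psi g\widehat\Psi^{-1}=\phi^1_{\widehat g\circ\widehat\Psi^{-1}}$, whose generator has the same $L^{(1,\infty)}$-norm as $\widehat g$ (the sup norm is unchanged under composition with a diffeomorphism), while $g^{-1}$ is generated by the reverse of $\widehat g$, the standard two-step concatenation of these two Hamiltonians generates $\phi^1_K$, is supported in $\mathcal U$, and has $L^{(1,\infty)}$-norm $<4\delta$. Fixing $\delta<\varepsilon/4$ from the start yields a generator of $\phi^1_K$, supported in $\mathcal U$, of $L^{(1,\infty)}$-norm $<\varepsilon$; a final reparametrization together with Sikorav's device of spreading this rotation-plus-antiderivative motion into an autonomous ``shear'' supported in a thin tube containing all the $Q_i$ and transition regions upgrades it to an \emph{autonomous} Hamiltonian $K'$ with the same support and $\|K'\|_{L^{(1,\infty)}}<\varepsilon$.

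I expect the main obstacle to be the construction and estimation of the cyclic shift $g$: one must simultaneously (i) check the purely combinatorial fact that the two matchings of the path compose to a single $(N+1)$-cycle — without which $F_i=F_0\circ g^{-i}$ and the telescoping above would not close up — and (ii), crucially, keep the generator of $g$ of size $O(\delta)$ \emph{independently of $N$}, which is exactly where the disjointness of $\mathcal R$ and of $\mathcal R'$ is used to turn a potentially enormous sum of energies into a single maximum; the final upgrade of the time-dependent generator to an autonomous one is a further technical point handled by Sikorav's shear argument. Everything else — the $\sum c_i=0$ normalization, the commutator identity, and reading off the generator — is routine once $g$ is available.
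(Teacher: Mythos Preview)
Your strategy is the same Sikorav commutator trick as the paper's, but you organize it as a \emph{single} commutator $[\widehat\Psi,g]$ with a cyclic shift $g=\sigma_{\mathrm o}\sigma_{\mathrm e}$, whereas the paper runs a three-step chain $\Phi\to\widetilde\Phi\to\widehat\Phi\to\Phi'$, each step being a commutator with $\Psi=\prod_i\Psi_i$ or $\Psi'=\prod_i\Psi'_i$ and hence of Hofer size $\le 2\delta$; the endpoint $\Phi'$ is the autonomous flow of $(\sum_i c_i)F_0$, whose sup norm is small because $\sum_i c_i\to 0$. Your packaging is cleaner and gives the slightly better constant $4\delta$ instead of $6\delta$, but it costs you an extra hypothesis: you need $\sigma_{\mathrm e},\sigma_{\mathrm o}$ to act as \emph{involutions} on the cubes, i.e.\ each $\Psi_i$ must genuinely swap $Q_{2i}$ and $Q_{2i+1}$ (and likewise for $\Psi'_i$). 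Claim~\ref{Claim:Setup_for_Sikorav_trick} only asserts $\Psi_i\circ\phi_{2i}=\phi_{2i+1}$, which says nothing about $\Psi_i|_{Q_{2i+1}}$. This strengthening is achievable with Hofer cost $O(\delta)$ (swap two cubes of diameter $<\delta$ inside $R_i$), but it is not ``harmless'' for free and should be argued; the paper's chain avoids it entirely.

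Two further points. First, your normalization $\sum c_i=0$ does not merely perturb $K$: it changes $\phi^1_K$ by the factor $\prod_i\phi^{-\bar c}_{F_i}=\phi^{-\bar c}_{\sum F_i}$. It is cleaner to skip the normalization and observe that your identity actually gives $\phi^1_K=[\widehat\Psi,g]\cdot\phi^{s_N}_{F_0}$ with $s_N=\sum_i c_i$; the second factor is autonomous of sup norm $|s_N|(1+\delta)$, which tends to $0$ exactly as in the paper's treatment of $\Phi'$. Second, your last sentence about upgrading the concatenated time-dependent generator to an \emph{autonomous} $K'$ via a ``shear'' is not substantiated and is not what Sikorav's trick provides. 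In fact the paper's own proof does not produce an autonomous $K'$ either: it only shows $\|\phi^1_K\|_{\mathrm{Hofer}}<\varepsilon$ inside $\mathrm{Ham}_c(\mathcal U)$, and that (time-dependent) bound is all that is used afterwards. Drop the autonomy claim rather than hand-wave it.
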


\noindent Before state the proof of the claim, let us see how to use it to finish the proof. Note that
\[\vertiii{\phi^1_K}=\vertiii{\phi^1_{K'}}\leq\|K'\|_{L^{(1,\infty)}}+\|K'\|_{L^(1,1)}\leq\|K'\|_{L^{(1,\infty)}}(1+\mathrm{Vol}(\mathcal{U})),\]
which completes the proof of Proposition \ref{Prop:Approximation_by_Step_function}.\qed

\subsection{Proof of Claim \ref{Claim:Sikorav_trick}}
    We restrict to the open symplectic submanifold $\mathcal{U}\subset M$, and all Hamiltonian diffeomorphisms considered in the proof of this claim are assumed to have compact support in $\mathcal{U}$.\\
    
    For each $0 \leq i \leq N$, define $\mathfrak{f}_i \in \mathrm{Ham}_c(\mathcal{U},\omega)$, supported in $Q_i$, as the time--$1$ map of the Hamiltonian isotopy generated by the Hamiltonian function $F_i$:
    \[\mathfrak{f}_i := \phi^1_{F_i}.\]
    
    \noindent Define Hamiltonian diffeomorphisms $\Phi,\Phi'\in\mathrm{Ham}_c(\mathcal{U},\omega)$ as 
    \[\Phi:=\phi^1_K=\prod_{i=1}^N\mathfrak{f}_i,\quad\Phi':=\mathfrak{f}_0\,\prod_{i=1}^N(\phi_i^{-1}\,\mathfrak{f}_i\,\phi_i),\]
    
    \noindent where $\phi_1,\ldots,\phi_N$ are Hamiltonian diffeomorphisms defined in Claim \ref{Claim:Setup_for_Sikorav_trick}. The Hamiltonian diffeomorphism $\Phi'$ is generated by an autonomous Hamiltonian function
    \[\widetilde{K}=\left(\sum_{i=1}^{N}c_i\right)\cdot F_0.\]
    Note that $\|F_0\|_{\infty}\leq 1+\delta$. Denote $V=\mathcal{U}\setminus\bigsqcup_{Q\in\mathcal{Q}}Q$. The property \textit{(vi)} in Claim \ref{Claim:Setup_for_Sikorav_trick} implies that the $\mathrm{Vol}(V)\to 0$ as $\delta\to 0$. Therefore, for $\delta>0$ small enough, we have
    \begin{align*}
        \|\widetilde{K}\|_{\infty} 
        &\leq (1+\delta)\cdot\Big\lvert\sum_{i=1}^{N}c_i\Big\rvert=(1+\delta)\cdot\Big\lvert\int_{\bigsqcup_{Q\in\mathcal{Q}}Q} H\,\omega^n\Big\rvert \\
        &= (1+\delta)\cdot\Big\lvert\int_{V} H\,\omega^n\Big\rvert\leq (1+\delta)(\max |H|)\cdot\mathrm{Vol}(V) < \frac{\varepsilon}{2}.
    \end{align*}
    \noindent This in particular implies that $\|\Phi'\|_{\mathrm{Hofer}}\leq\frac{\varepsilon}{2}$.\\
        
    \noindent Define Hamiltonian diffeomorphisms $\Psi,\Psi'\in\mathrm{Ham}_c(\mathcal{U},\omega)$ as
    \[\Psi:=\prod_{i=0}^L\Psi_i,\quad\Psi':=\prod_{i=1}^L\Psi'_i,\]
    where $\Psi_i,\Psi'_i$ are Hamiltonian diffeomorphisms defined in Claim \ref{Claim:Setup_for_Sikorav_trick}. Additionally, we introduce $\mathfrak{g}_1, \mathfrak{g}_2, \ldots, \mathfrak{g}_L \in \mathrm{Ham}_c(\mathcal{U},\omega)$:
    \[
    \mathfrak{g}_i :=
    \begin{cases}
    \mathfrak{f}_{2i}\,\Psi^{-1}\,\mathfrak{f}_{2i+1}\,\Psi
    = \mathfrak{f}_{2i} \,(\phi_{2i+1}\phi_{2i}^{-1})\,\mathfrak{f}_{2i+1}, 
    & \parbox[t]{0.5\textwidth}{if $N=2L+1$ is odd, or\\
    if $N=2L$ is even and $0 \leq i \leq L-1$}, \\[1mm]
    \mathfrak{f}_{2L} = \mathfrak{f}_N, 
    & \text{if $N=2L$ is even and $i=L$}.
    \end{cases}
    \]
    Note that $\mathrm{supp}(\mathfrak{g}_i)\subset Q_{2i}$ for $0\leq i\leq L$. Denote $\widetilde{\Phi}:=\mathfrak{g}_0\,\mathfrak{g}_1\cdots\mathfrak{g}_L$. Then we have
    \[\Phi^{-1}\widetilde{\Phi}=\prod_{i=0}^{L'}(\Psi^{-1}\,\mathfrak{f}_{2i+1}\,\Psi\,\mathfrak{f}_{2i+1}^{-1})=\left(\prod_{i=0}^{L'}\mathfrak{f}_{2i+1}\right)^{-1}\Psi^{-1}\left(\prod_{i=0}^{L'}\mathfrak{f}_{2i+1}\right)\,\Psi,\]
    and hence
    \begin{equation}\label{Eq:Sikorav_1}
        d_{\mathrm{Hofer}}(\Phi,\widetilde{\Phi})=\|\Phi^{-1}\,\widetilde{\Phi}\|_{\mathrm{Hofer}}=2\|\Psi\|_{\mathrm{Hofer}}\leq2\delta.
    \end{equation}
    For each $0\leq i\leq L$ define 
    \[\hat{\mathfrak{g}}_i:=\phi_{2i}^*\,\mathfrak{g}_i,\quad\text{and}\quad\hat{\mathfrak{h}}_i:=\prod_{j=i}^L\hat{\mathfrak{g}}_j\]
    Moreover, we define $\mathfrak{h}_{2i}:=(\phi_{2i})_*\,\hat{\mathfrak{h}}_i$ for $0\leq i\leq L$ and $\mathfrak{h}_{2i-1}:=(\phi_{2i-1})_*\,\hat{\mathfrak{h}}_i^{-1}$ for $1\leq i\leq L$. Define $\widehat{\Phi}:=\mathfrak{h}_0\,\mathfrak{h}_1\cdots\mathfrak{h}_{2L}$. Then
    \[\widetilde{\Phi}^{-1}\,\widehat{\Phi}=\left(\prod_{i=1}^{L}\mathfrak{h}_{2i-1}\right)^{-1}\Psi^{-1}\left(\prod_{i=1}^{L}\mathfrak{h}_{2i-1}\right)\,\Psi,\]
    and hence
    \begin{equation}\label{Eq:Sikorav_2}
        d_{\mathrm{Hofer}}(\widetilde{\Phi},\widehat{\Phi})=\|\widetilde{\Phi}^{-1}\,\widehat{\Phi}\|_{\mathrm{Hofer}}=2\|\Psi\|_{\mathrm{Hofer}}\leq2\delta.
    \end{equation}
    \noindent Finally, note that
    \[\mathfrak{h}_0=\hat{\mathfrak{h}}_0=\prod_{j=0}^L\hat{\mathfrak{g}}_j=\prod_{i=0}^{N}\phi_i^*\,\mathfrak{f}_i=\Phi',\]
    therefore
    \[(\Phi')^{-1}\widehat{\Phi}=\mathfrak{h}_0^{-1}\,\widehat{\Phi}=\prod_{i=1}^{2L}\mathfrak{h}_i=\left(\prod_{i=1}^{L}\mathfrak{h}_{2i-1}\right)^{-1}\Psi'\left(\prod_{i=1}^{L}\mathfrak{h}_{2i-1}\right)\,(\Psi')^{-1},\]
    and hence we get
    \begin{equation}\label{Eq:Sikorav_3}
        d_{\mathrm{Hofer}}(\widehat{\Phi},\Phi')=\|(\Phi')^{-1}\,\widehat{\Phi}\|_{\mathrm{Hofer}}=2\|\Psi'\|_{\mathrm{Hofer}}\leq2\delta.
    \end{equation}
    
    \noindent The inequalities (\ref{Eq:Sikorav_1}),(\ref{Eq:Sikorav_2}) and (\ref{Eq:Sikorav_3}) imply that $d_{\mathrm{Hofer}}(\Phi,\Phi')\leq6\delta$. Lastly, by picking $\delta>0$ small enough, we get that $\|\Phi\|_{\mathrm{Hofer}}\leq d_{\mathrm{Hofer}}(\Phi,\Phi')+\|\Phi'\|_{\mathrm{Hofer}}\leq 2\varepsilon/3$, which is enough to complete the proof.

\subsection{Proof of Claim \ref{Claim:Setup_for_Sikorav_trick}}

\noindent Let $\{(U_{i},\varphi_i)\}_{i=1}^{m}$ be a finite family of Darboux balls such that $\bigcup_{i=1}^m U_i=\mathcal{U}\supset\mathrm{supp}\,f$. After applying Lemma~\ref{Lemma:transition_charts_lemma}, we may assume that for every $1\leq i,j\leq m$ with $U_{i}\cap U_j\neq \emptyset$ there exists an open subset $U_{ij}\subset U_i\cap U_{j}$ on which the (possibly modified) transition map from $U_i$ to $U_{j}$ restricts to the identity. Let $\{V_i\}_{i=1}^m$ be a family of subsets of $M$ defined as $V_1=U_1$ and $V_i=U_i\setminus\bigcup_{j=1}^{i-1}U_i$ for $1<i\leq m$. Note that $\bigsqcup_{i=1}^m V_i=\mathcal{U}$.\\

\noindent Fix $\varepsilon > 0$. Let $\{\mathcal{Q}_i\}_{i=1}^m$ be families of disjoint open sets such that each element of $\mathcal{Q}_i$ 
is a standard cube of side length $a > 0$ contained in the chart $\varphi_i(V_i) \subset \varphi_i(U_i) \subset \mathbb{R}^{2n}$, and
\[
\mathrm{Vol}\Big( \bigsqcup_{i=1}^m \bigsqcup_{Q \in \mathcal{Q}_i} Q \Big)
> \mathrm{Vol}(\mathcal{U}) - \varepsilon.
\]
Let $\mathcal{G} = (\mathcal{V}, \mathcal{E})$ be the graph whose vertex set is 
$\mathcal{V} := \bigsqcup_{i=1}^m \bigsqcup_{Q \in \mathcal{Q}_i} Q$. 
Let $Q, Q' \in \mathcal{V}$ be two vertices, where $Q \in V_i$ and $Q' \in V_j$ for some $1 \le i, j \le m$. 
We place an edge between $Q$ and $Q'$ if and only if one of the following holds:
\begin{itemize}
    \item $i = j$;
    \item $i \ne j$, $U_i \cap U_j \ne \emptyset$, and one of the cubes $Q, Q'$ belongs to $U_{ij}$.
\end{itemize}

\noindent Since $\mathcal{U}$ is connected, the graph $\mathcal{G}$ is also connected. Moreover, if $\varepsilon > 0$, and consequently $a > 0$, are chosen sufficiently small, the vertices of the graph $\mathcal{G}$ can be ordered in a sequence $\widetilde{Q}_1, \widetilde{Q}_2, \ldots, \widetilde{Q}_{|\mathcal{V}|}$ such that there is an edge between every two consecutive vertices. For each $1 \leq i < |\mathcal{V}|$, let $\gamma_i : [0,1] \to M$ be a smoothly embedded curve satisfying:
\begin{enumerate}
    \item $\gamma_i(0)$ is a vertex of the cube $\widetilde{Q}_i$ (opposite to $\gamma_{i-1}(0)$ if $i > 1$), 
    and $\gamma_i(1)$ is a vertex of the cube $\widetilde{Q}_{i+1}$,
    \item $\gamma_i([0,1]) \subset V_k$ if $\widetilde{Q}_i,\widetilde{Q}_{i+1}\in V_k$, and otherwise $\gamma_i([0,1])\subset U_k$ if $\widetilde{Q}_i,\widetilde{Q}_{i+1}\in U_k$,
    \item $\gamma_i((0,1)) \cap \bigcup_{j=1}^{|\mathcal{V}|} \widetilde{Q}_j = \emptyset$ and $\gamma_i([0,1])\cap\gamma_j([0,1])=\emptyset$ whenever $i\neq j$.
\end{enumerate}

\noindent Note that for every $1 \leq i \leq m$, the set 
$U_i \setminus \bigcup_{j=1}^{|\mathcal{V}|} \widetilde{Q}_j$ is connected. Moreover, since every edge of the graph $\mathcal{G}$ is contained in some $U_k$, 
such curves $\gamma_i$ can indeed be constructed.

\begin{figure}[h]
    \centering
    \includegraphics[scale=0.9]{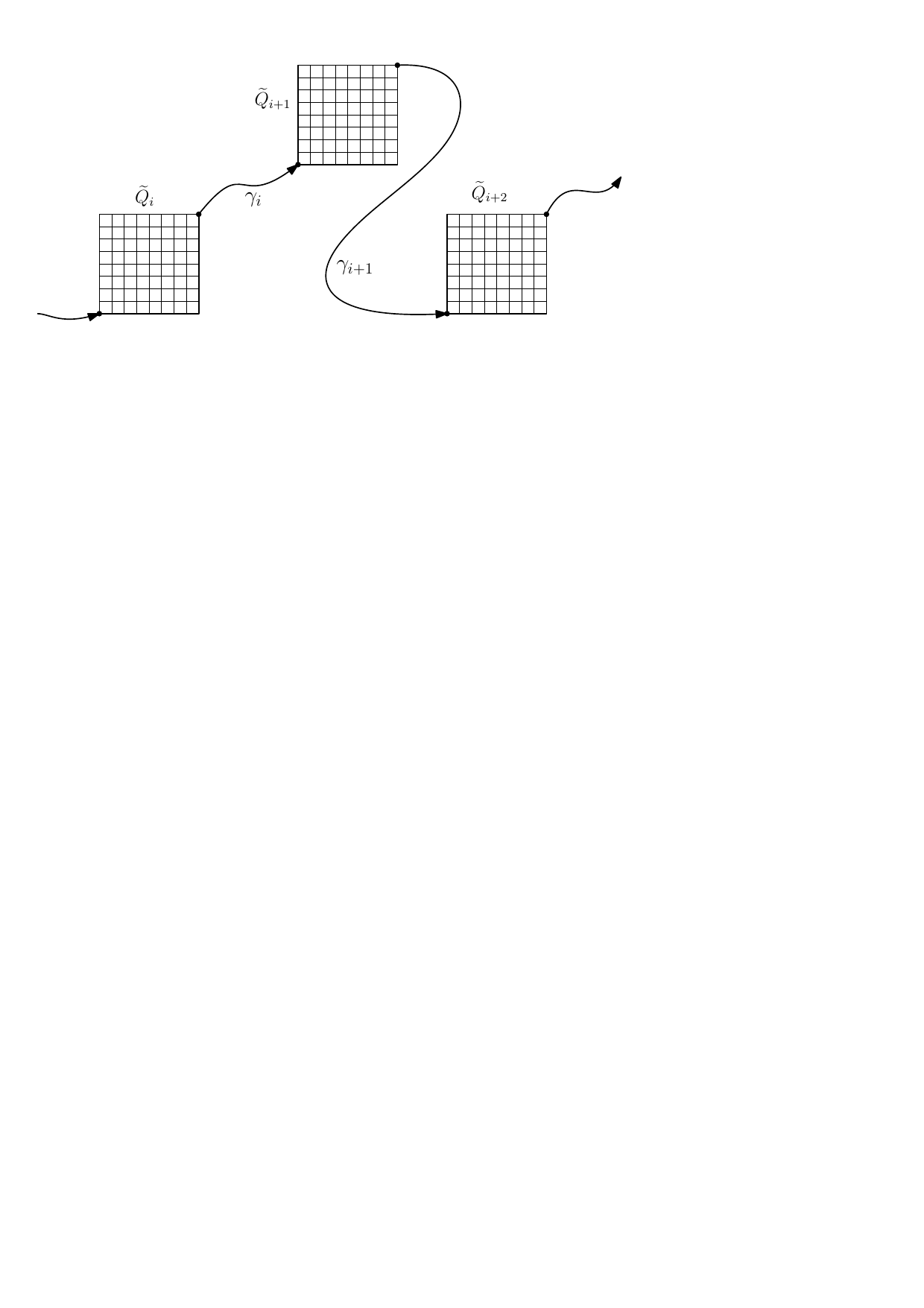}
    \caption{Cubes $\widetilde{Q}_i$ subdivided into smaller cubes}
    \label{fig:cubes}
\end{figure}

\noindent Subdivide each cube $\widetilde{Q}_i$ into smaller cubes of diameter less than $\delta$, then slightly shrink each of these cubes and label them as $Q_i^1, Q_i^2, \ldots, Q_i^K$ (see Figure \ref{fig:cubes}).  
Assume that the cubes satisfy the following properties:
\begin{itemize}
    \item The total volume condition:
    \[
    \sum_{i=1}^{|\mathcal{V}|} \sum_{j=1}^{K} \mathrm{Vol}(Q_i^j) > \mathrm{Vol}(\mathcal{U}) - \varepsilon,
    \]
    \item $Q_i^1$ touches the corner $\gamma_{i-1}(1)$ of $\widetilde{Q}_i$, and $Q_i^K$ touches the corner $\gamma_i(0)$ of $\widetilde{Q}_i$,
    \item For each $1 \leq j < K$, the cubes $Q_i^j$ and $Q_i^{j+1}$ shared a common side before shrinking.
\end{itemize}

\noindent Finally, define $\mathcal{Q} := \{ Q_i^j \mid 1 \le i \le |\mathcal{V}|, \, 1 \le j \le K \}$, and order its elements by declaring
\[
Q_{i_1}^{j_1} \prec Q_{i_2}^{j_2} \quad \text{if either } i_1 < i_2, \text{ or } i_1 = i_2 \text{ and } j_1 < j_2.
\]
Denote $\mathcal{Q} = \{Q_1, \ldots, Q_N\}$, with the indices ordered according to the previously defined order, and $N = K \cdot |\mathcal{V}|$. Let us define the family $\mathcal{R}$. For each $i$, consider the pair of cubes $Q_{2i}, Q_{2i+1}$. There are two cases:
\begin{itemize}
    \item If both cubes belong to the same cube $\widetilde{Q}_j$ for some $1 \le j \le |\mathcal{V}|$, then $Q_{2i}$ and $Q_{2i+1}$ shared a common edge before shrinking. In this case, we can define $R_i$ to be a rectangle containing both cubes.
    
    \item Otherwise, $Q_{2i} \subset \widetilde{Q}_k$ and $Q_{2i+1} \subset \widetilde{Q}_{k+1}$ for some $1 \le k < |\mathcal{V}|$. Moreover, $Q_{2i}$ touches a corner of $\widetilde{Q}_k$ and $Q_{2i+1}$ touches a corner of $\widetilde{Q}_{k+1}$, and they are connected via the curve $\gamma_k$. Let $V_l$ and $V_{l'}$ be sets such that $\widetilde{Q}_k \subset V_l$ and $\widetilde{Q}_{k+1} \subset V_{l'}$. 

    \begin{itemize}
        \item If $l = l'$, the image of $\gamma_k$ lies entirely inside $V_l$.  
        \item Otherwise, $U_l \cap U_{l'} \neq \emptyset$, and at least one of the cubes $\widetilde{Q}_k, \widetilde{Q}_{k+1}$ belongs to $U_{ll'}$.
    \end{itemize}

    In either case, if $\delta$ is chosen sufficiently small, we can define $R_i$ to be a tubular neighborhood of $\mathrm{Im}\,\gamma_k$, 
    and map $Q_{2i}$ to $Q_{2i+1}$ via a Hamiltonian isotopy supported inside $R_i$, 
    which translates $Q_{2i}$ along the curve $\gamma_k$ all the way to $Q_{2i+1}$. It is a well-known fact that this can be achieved by a Hamiltonian isotopy whose Hofer norm is as close as we want to the displacement energy of $Q_{2i}$ with is less than $\delta$.
\end{itemize}
\noindent We use the same construction for $\mathcal{R}'$, and with it we complete the proof.\qed

\section{Proof of Theorem \ref{Theorem:Classification_of_bi-invariant_metrics}}

\noindent\textbf{\underline{Case 1:}} \textit{There does not exist a constant $c > 0$ such that $\|f\| \geq c \|f\|_{\infty}$ for all $f \in C^{\infty}_c(M)$.}

\smallskip

This condition is equivalent to: for any $\varepsilon > 0$, there exists $f \in C_c^{\infty}(M)$ with $\|f\| \le \varepsilon$ and $\|f\|_{\infty} = 1$. 
Let $\phi \in \mathrm{Ham}_c(M,\omega)$ satisfy $\phi(p) \notin \mathrm{supp}\,f$ for some $p \in M$ with $|f(p)| = 1$. Then $g := \phi^*f - f \in C_{0,c}^{\infty}(M)$ satisfies $\|g\| \le 2\varepsilon$ and $1 \le \|g\|_{\infty} \le 2$. Thus, no constant $c > 0$ exists such that $\|g\| \ge c\,\|g\|_{\infty}$ for all $g \in C_{0,c}^{\infty}(M)$. By Theorem~\ref{Theorem:Lower_Bound_Open_Case}, the pseudo-distance $\rho$ is degenerate, and by the Eliashberg–Polterovich classification (Theorem~1.4.A in \cite{EP93}), $\rho$ is equivalent to $\mu\,|\mathrm{Cal}|$ for some $\mu \ge 0$.

\smallskip

\noindent\textbf{\underline{Case 2:}} \textit{There exists a constant $c>0$ such that $\|f\|\geq c\|f\|_{\infty}$ for all $f\in C^{\infty}_c(M)$.}

\smallskip

Corollary \ref{Corollary:upper_bound}, together with our assumption, implies that there exists $C>0$ such that for all $f\in C^{\infty}_c(M)$ we have
\begin{equation}
    c\|f\|_{\infty}\leq\|f\|\leq C(\|f\|_{\infty}+\|f\|_{L^1}).
\end{equation}

\noindent\underline{\textbf{Case 2.1:} $\mathrm{Vol}(M)<\infty$.}

\smallskip

Then $c\|\cdot\|_{\infty}\leq\|\cdot\|\leq C(1+\mathrm{Vol}(M))\|\cdot\|_{\infty}$, so $\rho$ is equivalent to Hofer’s metric.

\smallskip

\noindent\underline{\textbf{Case 2.2:} $\mathrm{Vol}(M) = \infty$.}

\smallskip

Let $\{h_k\}_{k=1}^{\infty} \subset C^{\infty}_c(M)$ be a sequence satisfying
\begin{equation}\label{Equation:classification}
    0\leq h_k\leq\tfrac{1}{k}, \quad \int_{M} h_k \,\omega^n = 1,\quad\mathrm{Vol}(\{h_k=\tfrac{1}{k}\})>k-\tfrac{1}{k}.
\end{equation}
Such a sequence exists because $\mathrm{Vol}(M)=\infty$. Moreover, $\|h_k\|\leq C(\|h_k\|_{\infty}+\|h_k\|_{L^1})\leq 2C$, and hence there exists
$\liminf_{k\to\infty}\|h_k\|$. We apply Theorem \ref{TheoremOW:extension} to extend the norm $\|\cdot\|$ to a norm $\|\cdot\|'$ on the space $L^{\infty}_c(M)$. Let $W_k$ be a bounded measurable set with $\mathrm{Vol}(W_k)=k$. For each $k\in\mathbb{N}$, define a function
\[F_k:=\frac{1}{k}\cdot\mathbf{1}_{W_k}\in L^{\infty}_c(M).\]

\begin{claim}
The number $b := \liminf_{k \to \infty} \|h_k\|$ does not depend on the choice of the sequence $\{h_k\}_{k=1}^{\infty} \subset C^{\infty}_c(M)$ satisfying (\ref{Equation:classification}), and it coincides with $\liminf_{k \to \infty} \|F_k\|'$.
\end{claim}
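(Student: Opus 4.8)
The plan is to reduce the claim to a single flexibility statement about normalized indicator functions of bounded measurable sets, and then to note that each admissible $h_k$ differs from such a normalized indicator by something that is small in $\|\cdot\|_{\infty}+\|\cdot\|_{L^1}$, hence small in $\|\cdot\|'$. Throughout I will use that $\|\cdot\|'$ restricts to $\|\cdot\|$ on $C^{\infty}_c(M)$, that it is $\mathrm{Ham}_c(M,\omega)$-invariant (so $\|\mathbf{1}_{\varphi(S)}\|'=\|\mathbf{1}_{S}\|'$ for every $\varphi\in\mathrm{Ham}_c(M,\omega)$ and bounded measurable $S$), and that $\|g\|'\le C(\|g\|_{\infty}+\|g\|_{L^1})$ on $L^{\infty}_c(M)$ --- all part of Theorem~\ref{TheoremOW:extension} together with the standing hypothesis of Case~2.

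\textbf{Step 1: the flexibility lemma.} I will first prove that for bounded measurable $E,E'\subset M$ with $\mathrm{Vol}(E)=\mathrm{Vol}(E')=v$ one has $\bigl|\,\|\tfrac1v\mathbf{1}_{E}\|'-\|\tfrac1v\mathbf{1}_{E'}\|'\,\bigr|\le 2C/v$. Fix $\eta>0$, replace $E,E'$ by bounded open sets agreeing with them up to volume $<\eta v$ (the change costs at most $C(\tfrac1v+\eta)$ in $\|\cdot\|'$ applied to $\tfrac1v\mathbf{1}_{(\cdot)}$, by the $L^{\infty}\!+\!L^1$-bound), and inside these open sets greedily pack pairwise disjoint symplectic balls of a common small radius $r$ (inside Darboux charts) until the uncovered volume drops below $\eta v$; trimming the larger packing makes the two ball counts equal to a common $m$, and the trimmed packings still cover all but an $O(\eta)v$-volume part of each set. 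Since $M$ is connected and $\dim M=2n\ge 2$, the configuration of $m$ balls sitting in $E$ can be relocated onto the configuration sitting in $E'$ by a single $\varphi\in\mathrm{Ham}_c(M,\omega)$, moving the balls one at a time along embedded arcs that avoid the remaining balls. Writing $\mathbf{1}_{E}=\sum_i\mathbf{1}_{B_i}+g$ and $\mathbf{1}_{E'}=\sum_i\mathbf{1}_{B_i'}+g'$ with $\varphi(B_i)=B_i'$, invariance gives $\|\tfrac1v\sum_i\mathbf{1}_{B_i}\|'=\|\tfrac1v\sum_i\mathbf{1}_{B_i'}\|'$, while the two remainders have sup-norm $\le\tfrac1v$ and $L^1$-norm $O(\eta)$; hence $\|\tfrac1v\mathbf{1}_{E}\|'\le\|\tfrac1v\mathbf{1}_{E'}\|'+O(\tfrac1v+\eta)$ with implied constant depending only on $C$. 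Letting $\eta\to0$ and swapping $E\leftrightarrow E'$ finishes Step 1.

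\textbf{Step 2: reducing $h_k$ to a normalized indicator.} Set $A_k:=\{x\in M:h_k(x)=\tfrac1k\}$. By (\ref{Equation:classification}) and $\int_M h_k\,\omega^n=1$, this is a bounded measurable set with $k-\tfrac1k<\mathrm{Vol}(A_k)=:v_k\le k$. The function $h_k-\tfrac1k\mathbf{1}_{A_k}$ vanishes on $A_k$, has sup-norm $\le\tfrac1k$, and $L^1$-norm $1-\tfrac1k v_k<\tfrac1{k^2}$, so $\bigl|\,\|h_k\|-\|\tfrac1k\mathbf{1}_{A_k}\|'\,\bigr|\le\|h_k-\tfrac1k\mathbf{1}_{A_k}\|'\le C(\tfrac1k+\tfrac1{k^2})\to0$. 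Choose any bounded measurable $N_k$ disjoint from $A_k$ with $\mathrm{Vol}(N_k)=k-v_k\le\tfrac1k$ (possible since $\mathrm{Vol}(M)=\infty$) and put $E_k:=A_k\sqcup N_k$, so $\mathrm{Vol}(E_k)=k=\mathrm{Vol}(W_k)$. Then $\bigl|\,\|\tfrac1k\mathbf{1}_{A_k}\|'-\|\tfrac1k\mathbf{1}_{E_k}\|'\,\bigr|\le\|\tfrac1k\mathbf{1}_{N_k}\|'\le C(\tfrac1k+\tfrac1{k^2})$, and Step 1 applied to $E_k,W_k$ gives $\bigl|\,\|\tfrac1k\mathbf{1}_{E_k}\|'-\|F_k\|'\,\bigr|\le2C/k$. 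Chaining the three estimates yields $\bigl|\,\|h_k\|-\|F_k\|'\,\bigr|\to0$, hence $\liminf_k\|h_k\|=\liminf_k\|F_k\|'$.

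\textbf{Step 3: independence, and the main obstacle.} If $\{W_k\}$ and $\{\widetilde W_k\}$ are two choices of bounded measurable sets of volume $k$, Step 1 gives $\bigl|\,\|\tfrac1k\mathbf{1}_{W_k}\|'-\|\tfrac1k\mathbf{1}_{\widetilde W_k}\|'\,\bigr|\le2C/k\to0$, so $\liminf_k\|F_k\|'$ does not depend on the choice of $W_k$; by Step 2 it therefore equals $\liminf_k\|h_k\|$ for every admissible sequence $\{h_k\}$, which is the assertion of the claim. The only genuinely nontrivial ingredient is the construction of $\varphi$ in Step 1 --- realizing a prescribed relocation of a finite family of disjoint small symplectic balls onto another such family by a compactly supported Hamiltonian diffeomorphism. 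This is exactly where connectedness of $M$ enters: it is done ball by ball, transporting each ball along an embedded path chosen disjoint from all the others (possible because $\dim M\ge2$ and the balls are small), each transport being generated by a Hamiltonian supported in a thin tube around the path, hence supported away from the remaining balls.
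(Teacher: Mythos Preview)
Your overall structure (Steps 2 and 3) is sound, but you have taken a substantial detour in Step~1, and that detour contains a genuine gap.

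The paper's proof is almost immediate because Theorem~\ref{TheoremOW:extension} gives more than you used: the extended semi-norm $\|\cdot\|'$ is invariant not merely under $\mathrm{Ham}_c(M,\omega)$ but under \emph{all} compactly supported measure-preserving bijections of $M$. With this in hand, one simply chooses such a bijection $\varphi_k$ with $\{h_k=\tfrac1k\}\subset\varphi_k(W_k)$ (elementary measure theory, no symplectic topology required) and observes that $F_k\circ\varphi_k^{-1}-h_k$ has $L^\infty$-norm at most $\tfrac1k$ and $L^1$-norm $O(k^{-2})$, hence $\|\cdot\|'$-norm tending to zero; by invariance $\|F_k\|'=\|F_k\circ\varphi_k^{-1}\|'$, and the claim follows in one line. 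You cite Theorem~\ref{TheoremOW:extension} but extract only the $\mathrm{Ham}_c$-invariance from it, which is what forces you in Step~1 to rebuild by hand the special case that $\|\tfrac1v\mathbf{1}_E\|'$ is essentially independent of $E$.

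The gap is in your Step~1: you assert that one can ``greedily pack pairwise disjoint symplectic balls of a common small radius $r$ \ldots\ until the uncovered volume drops below $\eta v$''. This is false. Disjoint Euclidean balls of a single fixed radius can never occupy more than the sphere-packing density of $\mathbb{R}^{2n}$, a constant strictly less than $1$, no matter how small $r$ is. With the uncovered volume bounded below by a fixed positive fraction of $v$, your remainder terms $g,g'$ contribute $O(1)$ to the $\|\cdot\|'$-estimate rather than $O(\tfrac1v+\eta)$, and Step~1 yields only a bounded discrepancy, not one that vanishes as $v\to\infty$. The argument can be repaired by packing with small standard cubes in Darboux coordinates (which \emph{do} tile to full density), but then transporting a standard cube from one Darboux chart onto a standard cube in another is not a mere translation and needs a device such as Lemma~\ref{Lemma:transition_charts_lemma}. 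Far simpler is to use the full measure-preserving-bijection invariance already provided by Theorem~\ref{TheoremOW:extension}, as the paper does.
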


\begin{proof}
    By passing to a converging subsequence if necessary, we may assume $\lim_{k\to\infty}\|h_k\|=b$. Let $\varphi_k:M\to M$ be a compactly supported volume-preserving bijection satisfying $\{h_k=\tfrac{1}{k}\}\subset\varphi_k( W_k)$. Then 
    \[\mathrm{Vol}(\{|F_k\circ\varphi_k-h_k|>\tfrac{1}{k}\})<\tfrac{1}{k},\]
    implying that the sequence $F_k\circ\varphi_k-h_k$ converges in measure to $0$. We can now use
    \[\big\lvert\|F_k\circ\varphi_k\|'-\|h_k\|'\big\rvert\leq \|F_k\circ\varphi_k-h_k\|'\xrightarrow{k\to\infty}0,\]
    to conclude $\lim_{k\to\infty}\|F_k\|'=\lim_{k\to\infty}\|F_k\circ\varphi_k\|'=\lim_{k\to\infty}\|h_k\|'=\lim_{k\to\infty}\|h_k\|=b$.
\end{proof}

\smallskip

Fix $\varphi \in \mathrm{Ham}(M,\omega)$ and let $H \in C^{\infty}_c([0,1]\times M)$ be a Hamiltonian with $\phi^1_H = \varphi$. Set $c(t) := \int_M H(t,\cdot)\,\omega^n$, and let $\{h_k\}_{k=1}^{\infty}$ satisfy (\ref{Equation:classification}) with $h_k|_{\mathrm{supp}\,H} \equiv \tfrac{1}{k}$ for $k$ large. Passing to a subsequence if needed, assume $\lim_{k\to\infty}\|h_k\|=b$. Define
\[
\widetilde{H}_k(t,x) := H(t,x) - c(t)h_k(x).
\]
Then $\int_M \widetilde{H}_k(t,\cdot)\,\omega^n = 0$ for all $t\in[0,1]$, hence $\phi^1_{\widetilde{H}_k} \in \ker(\mathrm{Cal})$. Using the upper bound $\|\cdot\|\leq C(\|\cdot\|_{\infty}+\|\cdot\|_{L^1})$ and Theorem \ref{Theorem:Upper_Bound_Infinite_volume}, we get
\[
\vertiii{\phi^{1}_{\widetilde{H}_k}} \leq C\|\widetilde{H}_k\|_{L^{(1,\infty)}}.
\]
Moreover, for $k$ large enough we have $\phi^1_H = \phi^1_{\widetilde{H}_k}\phi^{\mathrm{Cal}(\phi^1_H)}_{h_k}$, hence
\[
\vertiii{\varphi} = \vertiii{\phi^1_H} \leq \vertiii{\phi^1_{\widetilde{H}_k}} + \vertiii{\phi^{\mathrm{Cal}(\phi^1_H)}_{h_k}}
\leq C\|\widetilde{H}_k\|_{L^{(1,\infty)}} + \|h_k\|\cdot|\mathrm{Cal}(\phi^1_H)|.
\]
Taking $k\to\infty$ yields $\|\phi^1_H\| \leq C\|H\|_{L^{(1,\infty)}} + b\cdot|\mathrm{Cal}(\phi^{1}_H)|$. Minimizing over all $H$ generating $\varphi$, we obtain
\begin{equation}\label{Equation:upper_bound_for_invariant_norm}
    c\,\|\varphi\|_{\mathrm{Hofer}}\leq\vertiii{\varphi} \leq C\,\|\varphi\|_{\mathrm{Hofer}} + b\cdot|\mathrm{Cal}(\varphi)|,
\end{equation}

\noindent where we used the inequality $\|\cdot\|\ge c\|\cdot\|_{\infty}$ to obtain the lower bound.

\smallskip

\noindent\underline{\textbf{Case 2.2.a:} $b=0$.}

\smallskip

In this case $c\|\varphi\|_{\mathrm{Hofer}}\leq\vertiii{\varphi}\leq C\|\varphi\|_{\mathrm{Hofer}}$, so $\rho$ is equivalent to Hofer’s metric.

\smallskip 

\noindent\underline{\textbf{Case 2.2.b:} $b>0$.}

\smallskip

Let $H \in C^{\infty}_c([0,1] \times M)$ be a Hamiltonian generating $\varphi$, and set $H_t(x) = H(t,x)$ for $t \in [0,1]$. Apply Theorem \ref{TheoremOW:extension} to extend $\|\cdot\|$ to a norm $\|\cdot\|'$ on $L^{\infty}_c(M)$, and then use Lemma \ref{LemmaOW:partition} for $H_t$ to obtain
\begin{equation}\label{Eq:averaging_property}
    \frac{1}{\mathrm{Vol}(S)} \Big\lvert \int_{M} H_t \, \omega^n\Big\rvert\cdot\|\mathbf{1}_{S}\|' 
    = \|\langle H_t \rangle_{S} \mathbf{1}_{S}\|' 
    \leq \|H_t\|' = \|H_t\|,
\end{equation}
for every bounded measurable $S \supset \mathrm{supp}\,H_t$.  
Let $\{S_k\}_{k=1}^{\infty}$ be an increasing sequence of bounded measurable subsets of $M$ with $\bigcup_{t \in [0,1]} \mathrm{supp}\,H_t \subset S_k$ and $\lim_{k \to \infty} \mathrm{Vol}(S_k) = \infty$.  
Define $G_k := \frac{1}{\mathrm{Vol}(S_k)} \mathbf{1}_{S_k} \in L^{\infty}_c(M)$ and apply (\ref{Eq:averaging_property}) to obtain
\[
\int_{0}^{1} \|H_t\| \, dt \geq \int_{0}^{1} \big\|G_k\big\|' \cdot \Big\lvert \int_M H_t \, \omega^n \Big\rvert \, dt 
\geq \big\|G_k\big\|' \cdot \Big\lvert \int_{0}^{1} \int_M H_t \, \omega^n \, dt \Big\rvert 
= \big\|G_k\big\|' \cdot |\mathrm{Cal}(\varphi)|.
\]
Taking $\liminf_{k \to \infty}$, we obtain $\vertiii{\varphi} \ge b\,|\mathrm{Cal}(\varphi)|$. Together with $\vertiii{\varphi} \ge c\,\|\varphi\|_{\mathrm{Hofer}}$, this yields
\begin{equation}\label{Equation:lower_bound_for_invariant_norm}
    \vertiii{\varphi} \ge \tfrac{c}{2}\,\|\varphi\|_{\mathrm{Hofer}} + \tfrac{b}{2}\,|\mathrm{Cal}(\varphi)|.
\end{equation}
Combining (\ref{Equation:upper_bound_for_invariant_norm}) and (\ref{Equation:lower_bound_for_invariant_norm}), we conclude that $\rho$ is equivalent to $d_{\mathrm{Hofer}} + |\mathrm{Cal}|$.

\section{Appendix: Proof of Theorem \ref{Theorem:Lower_Bound_Open_Case}}

We follow the same approach as in \cite{OW05} and present arguments adapted to our setting.

\begin{thm}\label{TheoremOW:extension}
    Let $\|\cdot\|$ be $\mathrm{Ham}(M,\omega)$-invariant norm on the space $C^{\infty}_c(M)$ such that $\|\cdot\|\leq C(\|\cdot\|_{\infty}+\|\cdot\|_{L^1})$ for some constant $C>0$. Then $\|\cdot\|$ can be extended to a semi-norm $\|\cdot\|'\leq C(\|\cdot\|_{\infty}+\|\cdot\|_{L^1})$ on $L^{\infty}_c(M)$, which is invariant under all compactly supported measure preserving bijections on $M$.
\end{thm}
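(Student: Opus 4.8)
The plan is to extend $\|\cdot\|$ from $C^\infty_c(M)$ to $L^\infty_c(M)$ by density, using the hypothesis $\|\cdot\|\le C(\|\cdot\|_\infty+\|\cdot\|_{L^1})$ to control the extension, and then to transport the $\mathrm{Ham}$-invariance to measure-preserving bijections via an approximation argument. More precisely, the first step is to fix a bounded open set $\Omega\subset M$ and show that $C^\infty_c(\Omega)$ is dense in $L^\infty_c(\Omega)$ with respect to the $L^1$-norm (while keeping $\|\cdot\|_\infty$ bounded): given $g\in L^\infty_c(\Omega)$ with $\|g\|_\infty\le A$, mollification produces $g_j\in C^\infty_c(\Omega')$ (for a slightly larger $\Omega'$) with $\|g_j\|_\infty\le A$ and $g_j\to g$ in $L^1$. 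Then for a smooth approximating sequence $g_j$ we have $\|g_i-g_j\|\le C(\|g_i-g_j\|_\infty+\|g_i-g_j\|_{L^1})$; this does not immediately show $\{g_j\}$ is Cauchy in $\|\cdot\|$ because the $L^\infty$-part need not go to zero. The standard fix (as in \cite{OW05}) is to note that it suffices to define $\|g\|':=\liminf_j\|g_j\|$ and to check this is well-defined, subadditive, homogeneous, and bounded above by $C(\|g\|_\infty+\|g\|_{L^1})$; boundedness is immediate from the hypothesis since $\|g_j\|\le C(\|g_j\|_\infty+\|g_j\|_{L^1})\le C(A+\|g\|_{L^1}+o(1))$, but this only gives the weaker bound $C(A+\|g\|_{L^1})$ rather than $C(\|g\|_\infty+\|g\|_{L^1})$ when $A>\|g\|_\infty$; taking the infimum over all admissible $A\ge \|g\|_\infty$, i.e. $A=\|g\|_\infty$, recovers the sharp constant because one can mollify so that $\|g_j\|_\infty\le\|g\|_\infty$ exactly (mollification does not increase the sup-norm).

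The second step is to verify that $\|\cdot\|'$ is independent of the approximating sequence and genuinely extends $\|\cdot\|$. For independence: given two sequences $g_j,g_j'\to g$ in $L^1$ with uniformly bounded $L^\infty$-norms, the differences $g_j-g_j'$ have $L^\infty$-norm bounded by a fixed constant and $L^1$-norm tending to $0$; one interpolates by showing that a function with small $L^1$-norm and bounded $L^\infty$-norm has small $\|\cdot\|$ — here is where one genuinely needs an argument, and I expect it to proceed by decomposing such a function (using a Hamiltonian diffeomorphism to push most of its mass into a set of small volume, or by a level-set decomposition as in the proof of Proposition~\ref{PropositionL1-Splitting}) into a piece of small $L^1$ \emph{and} small $L^\infty$ contribution plus a controlled remainder. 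Alternatively, and more in the spirit of \cite{OW05}, one observes that $\|\cdot\|'$ as defined via $\liminf$ is automatically well-defined once one checks the triangle inequality holds in the limit, which follows from $\|g_j+g_j'\|\le\|g_j\|+\|g_j'\|$ and the superadditivity of $\liminf$ being replaced by the standard subadditivity bound $\limsup(\|g_j\|+\|g_j'\|)\le\liminf\|g_j\|+\limsup\|g_j'\|$ after passing to a common subsequence realizing both liminfs. That $\|\cdot\|'$ restricts to $\|\cdot\|$ on $C^\infty_c(M)$ is clear by taking the constant sequence.

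The third and final step is invariance under compactly supported measure-preserving bijections $\theta$. Fix such a $\theta$ and $g\in L^\infty_c(M)$; choose smooth $g_j\to g$ in $L^1$ with bounded $L^\infty$-norms. The key point is that $g_j\circ\theta^{-1}$ need not be smooth, but the composition still converges to $g\circ\theta^{-1}$ in $L^1$ (since $\theta$ preserves volume) with the same $L^\infty$-bound, so $\|g\circ\theta^{-1}\|'=\liminf_j\|g_j\circ\theta^{-1}\|'$. It therefore suffices to know $\|h\circ\theta^{-1}\|'=\|h\|'$ for $h\in C^\infty_c(M)$, which reduces — again approximating $h\circ\theta^{-1}$ by smooth functions and $\theta$ in $L^1$ by Hamiltonian diffeomorphisms — to the fact that any compactly supported measure-preserving bijection can be $L^1$-approximated (on the relevant compact set, in the sense that $h\circ\psi_j^{-1}\to h\circ\theta^{-1}$ in $L^1$) by compactly supported Hamiltonian diffeomorphisms $\psi_j$; this is a standard consequence of the connectedness of $\mathrm{Ham}_c$ together with the density of volume-preserving diffeomorphisms in measure-preserving transformations (the approximation needed is only in measure, for which the $\mathrm{Ham}$-invariance of $\|\cdot\|$ and the $L^1$-continuity of $\|\cdot\|'$ then give $\|h\circ\theta^{-1}\|'=\lim_j\|h\circ\psi_j^{-1}\|=\lim_j\|h\|=\|h\|$, and then $\|h\|=\|h\|'$). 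The main obstacle I anticipate is precisely this last approximation lemma: one must be careful that the measure-preserving bijection need not be continuous, so one cannot directly isotope it; the right statement is that for fixed smooth $h$, the function $h\circ\theta^{-1}$ can be approximated in $L^1$ (equivalently in measure, since everything is uniformly bounded) by functions of the form $h\circ\psi^{-1}$ with $\psi\in\mathrm{Ham}_c$, and this follows from Lusin-type / Rokhlin-tower approximations of measure-preserving transformations together with the fact that $\mathrm{Ham}_c$ acts transitively enough on small cubes (cf. Lemma~\ref{Lemma:transition_charts_lemma} and the cube-transport arguments already developed in Section~2).
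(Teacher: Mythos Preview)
Your overall architecture---approximate in measure by smooth functions, take a $\liminf$, check semi-norm axioms, check the bound, check invariance---matches the paper's. The paper defines $\|F\|':=\inf\{\liminf_i\|F_i\|\}$, the infimum over \emph{all} uniformly bounded sequences $F_i\to F$ in measure with supports in a fixed compact, so well-definedness and the triangle inequality are immediate; the bound $\|\cdot\|'\le C(\|\cdot\|_\infty+\|\cdot\|_{L^1})$ is obtained exactly as you do, via mollifiers.

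The genuine gap is your sentence ``That $\|\cdot\|'$ restricts to $\|\cdot\|$ on $C^\infty_c(M)$ is clear by taking the constant sequence.'' With the inf-over-sequences definition this gives only $\|F\|'\le\|F\|$. The reverse inequality is the heart of the matter and is \emph{not} trivial: one must show that for \emph{every} uniformly bounded sequence $F_i\to F$ in measure, $\liminf_i\|F_i\|\ge\|F\|$. The hypothesis $\|\cdot\|\le C(\|\cdot\|_\infty+\|\cdot\|_{L^1})$ gives no control here because $\|F_i-F\|_\infty$ need not tend to zero---precisely the obstruction you yourself identify in Step~2 but then set aside. The paper (following Claim~3.1 in \cite{OW05}) handles this by using the $\mathrm{Ham}$-invariance to produce, from any such $F_i$, a modified sequence $\widetilde F_i$ with $\|\widetilde F_i\|\le\|F_i\|$ and $\widetilde F_i\to F$ \emph{uniformly}; then $\|\widetilde F_i\|\to\|F\|$ and the inequality follows. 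This step is where the invariance hypothesis is actually consumed, and you have not supplied it.

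Your Step~2 alternatives do not close this gap either. The first (``a function with small $L^1$ and bounded $L^\infty$ has small $\|\cdot\|$'') is false in general for an arbitrary invariant norm satisfying only the stated upper bound, and in this paper the closest statement (Claim~\ref{Claim:support_to_0_norm_to_0}) is proved \emph{using} the extension theorem, so invoking it here would be circular. The second (subsequence bookkeeping with $\liminf$/$\limsup$) still leaves you needing $\limsup\|g_j-g_j'\|=0$, which is exactly the unproved point. For the invariance under measure-preserving bijections your sketch is in the right spirit; the paper simply cites Claim~3.2 of \cite{OW05}.
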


\begin{proof}
    Any function $F \in L^{\infty}_c(M)$ can be approximated in measure by smooth compactly supported functions. We then define
    \[
    \|F\|' := \inf \big\{ \liminf_{i \to \infty} \|F_i\| \big\},
    \]
    where the infimum is over all uniformly bounded sequences $\{F_i\}_{i=1}^\infty \subset C_c^\infty(M)$ with supports contained in a single compact set and converging to $F$ in measure. Since both the infimum and $\liminf$ respect scaling, $\|\cdot\|'$ is positively homogeneous. To check the triangle inequality, let $F,G \in L^{\infty}_c(M)$ and pick $\varepsilon$-approximating sequences $\{F_n\},\{G_n\}$ such that $\liminf_{n\to\infty}\|F_n\| \le \|F\|' + \varepsilon$ and $\liminf_{n\to\infty}\|G_n\| \le \|G\|' + \varepsilon$. Then $\liminf_{n\to\infty} \|F_n + G_n\| \le \|F\|' + \|G\|' + 2\varepsilon$, so $\|F+G\|' \le \|F\|' + \|G\|'$. Hence, $\|\cdot\|'$ is a semi-norm.
    \begin{claim}[Ostrover--Wagner]
        For every $F\in C^{\infty}_c(M)$ we have $\|F\|=\|F\|'$.
    \end{claim}
    \begin{proof}
        The inequality $\|F\|'\leq\|F\|$ follows immediately by taking sequence $F_i\equiv F$. It remains to prove that $\|F\|'\geq\|F\|$.
        Let $\{F_i\}_{i=1}^{\infty}$ be an uniformly bounded sequence of smooth functions converging in measure to $F$ and let $U\subset M$ be a bounded subset that contains $\mathrm{supp}\,F$ and $\mathrm{supp}\,F_i$ for all $i$. By restricting to $C^{\infty}_c(U)$, the condition $\|\cdot\|\leq C(\|\cdot\|_{\infty}+\|\cdot\|_{L^1})$ implies that $\|G\|\leq C'\|G\|_{\infty}$ for all $G\in C^{\infty}_c(U)$ and $C'=C(1+\mathrm{Vol}(U))$. We can now apply the same exact argument as the one in the proof Claim 3.1 \cite{OW05} to get a sequence $\{\widetilde{F}_i\}_{i=1}^{\infty}\subset C^{\infty}_c(U)$ such that $\|\widetilde{F}_i\|\leq\|F_i\|$ and $\lim_{i\to\infty}\|\widetilde{F}_i\|=\|F\|$. This in particular implies that $\|F\|=\lim_{i\to\infty}\|\widetilde{F}_i\|\leq\liminf_{i\to\infty}\|F_i\|$, and hence $\|F\|\leq\|F\|'$.
    \end{proof}
    
    \begin{claim}[Ostrover--Wagner]
        For every $F\in L^{\infty}_c(M)$ and every compactly supported measure preserving bijection $\varphi:M\to M$ we have $\|F\circ\varphi\|'=\|F\|'$.
    \end{claim}
    \begin{proof}
        See Claim 3.2 in \cite{OW05}.
    \end{proof}
    
    \noindent Finally, we prove that $\|\cdot\|'\leq C(\|\cdot\|_{\infty}+\|\cdot\|_{L^1})$. For simplicity we assume $M=\mathbb{R}^{2n}$, otherwise we can use partition of unity to reduce to this case. Pick $F\in L^{\infty}_c(\mathbb{R}^{2n})$. Then $F\in L^1(\mathbb{R}^{2n})$. Choose a standard family of mollifiers $\rho_{\varepsilon}\in C^{\infty}_c(\mathbb{R}^{2n})$ (with $\varepsilon>0$) satisfying $\rho_{\varepsilon}\geq 0$, $\int_{\mathbb{R}^{2n}}\rho_{\varepsilon}=1$ and $\mathrm{Vol}(\mathrm{supp}\,\rho_{\varepsilon})\xrightarrow{\varepsilon\to 0} 0$. Define $F_{\varepsilon}:=F\ast\rho_{\varepsilon}\in C^{\infty}_c(\mathbb{R}^{2n})$. One can check that $\|F_{\varepsilon}\|_{\infty}\leq \|F\|_{\infty}$ and Young's convolution inequality implies that $\|F_{\varepsilon}\|_{L^1}\leq\|F\|_{L^1}$, and therefore $\|F_{\varepsilon}\|\leq C(\|F_{\varepsilon}\|_{\infty}+\|F_{\varepsilon}\|_{L^1})\leq C(\|F\|_{\infty}+\|F\|_{L^1})$. Using the fact that $F_{\varepsilon}\xrightarrow{\varepsilon\to 0} F$ in measure, we get the desired inequality.
\end{proof}

\begin{lemma}[Ostrover--Wagner]\label{LemmaOW:partition}
    Let $F\in C_c(M)$, and let $S_1,\ldots,S_k$ be bounded finite measure sets with $\mathrm{supp}\,F\subset S_1\sqcup\ldots\sqcup S_k$. Then
    \[\|\langle F\rangle_{S_1}\mathbf{1}_{S_1}+\ldots+\langle F\rangle_{S_k}\mathbf{1}_{S_k}\|'\leq\|F\|',\] where $\langle F\rangle_S:=\frac{1}{\mathrm{Vol}(S_i)}\int_{S}F\,\omega^n$.
\end{lemma}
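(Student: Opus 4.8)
\medskip
\noindent\textbf{Proof strategy.} The plan is to work throughout with the extended seminorm $\|\cdot\|'$ from Theorem~\ref{TheoremOW:extension} --- both sides of the asserted inequality already live there --- exploiting that $\|\cdot\|'$ is subadditive and positively homogeneous, agrees with $\|\cdot\|$ on $C^\infty_c(M)$, and is invariant under \emph{every} compactly supported measure-preserving bijection of $M$, not merely under Hamiltonian diffeomorphisms. Writing $g:=\sum_{l=1}^k\langle F\rangle_{S_l}\mathbf 1_{S_l}$, I would try to construct a sequence $\{G_i\}\subset C^\infty_c(M)$ that is uniformly bounded, has all supports inside one compact set, converges to $g$ in measure, and satisfies $\liminf_{i\to\infty}\|G_i\|\le\|F\|'$; the definition of $\|\cdot\|'$ then forces $\|g\|'\le\|F\|'$, which is exactly the claim.

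To build such a sequence, first fix $\varepsilon>0$ and a smooth approximating sequence $\{F_i\}$ for $F$ --- uniformly bounded, supported in a common compact set, converging to $F$ in measure --- with $\liminf_i\|F_i\|\le\|F\|'+\varepsilon$, and pass to a subsequence along which $\|F_i\|$ converges. Then set $G_i:=\frac1{N_i}\sum_{j=1}^{N_i}F_i\circ\varphi^i_j$ for suitable integers $N_i\to\infty$ and compactly supported volume-preserving diffeomorphisms $\varphi^i_j$, all supported in a fixed compact set. Since $F_i\circ\varphi^i_j$ is smooth and $\varphi^i_j$ is a measure-preserving bijection, $\|F_i\circ\varphi^i_j\|=\|F_i\circ\varphi^i_j\|'=\|F_i\|'=\|F_i\|$, so subadditivity gives $\|G_i\|\le\|F_i\|$ and hence $\liminf_i\|G_i\|\le\|F\|'+\varepsilon$; since also $\|G_i\|_\infty\le\|F_i\|_\infty$ and the supports are controlled, everything reduces to choosing the diffeomorphisms so that $G_i\to g$ in measure, after which letting $\varepsilon\to0$ finishes.

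For the convergence I would first note that $\|F_i-F\|_{L^1}\to0$, so (the $\varphi^i_j$ being volume-preserving) $\frac1{N_i}\sum_j(F_i-F)\circ\varphi^i_j\to0$ in $L^1$ no matter which diffeomorphisms are chosen, reducing the task to arranging $\frac1{N_i}\sum_j F\circ\varphi^i_j\to g$ in $L^1$. I would obtain this from an \emph{idealized} averaging followed by smoothing: for each $l$ choose a measure isomorphism $\psi_l\colon S_l\to[0,v_l)$, $v_l=\mathrm{Vol}(S_l)$, pushing $\omega^n$ to Lebesgue measure, and let $\tau_N$ be the compactly supported measure-preserving bijection of $M$ acting on each $S_l$ as $\psi_l^{-1}R_{v_l/N}\psi_l$ (with $R_\theta$ rotation by $\theta$ on $\mathbb R/v_l\mathbb Z$) and as the identity off $\bigsqcup_l S_l$. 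Continuity of translation in $L^1$ plus convergence of Riemann sums of a continuous Banach-space-valued map shows that $\frac1N\sum_{j=0}^{N-1}F\circ\tau_N^{\,j}\to g$ in $L^1(M)$ as $N\to\infty$ (on $S_l$ this average tends to $\frac1{v_l}\int_0^{v_l}(F\circ\psi_l^{-1})(\,\cdot+\theta\,)\,d\theta=\langle F\rangle_{S_l}$ by Fubini, and $F$ vanishes off $\bigsqcup_l S_l$). Finally, since compactly supported volume-preserving diffeomorphisms are dense, in the topology of convergence in measure, among compactly supported measure-preserving bijections (Oxtoby--Ulam), for each $i$ I would pick volume-preserving diffeomorphisms $\varphi^i_j$, $0\le j<N_i$, approximating $\tau_{N_i}^{\,j}$ closely enough that $\|F\circ\varphi^i_j-F\circ\tau_{N_i}^{\,j}\|_{L^1}$ is negligible (using uniform continuity of $F$); letting these errors tend to $0$ while $N_i\to\infty$ yields $\frac1{N_i}\sum_j F\circ\varphi^i_j\to g$ in $L^1$, hence $G_i\to g$ in measure.

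I expect the delicate point to be precisely this last step --- trading the transparent but non-smooth measure-preserving ``rotation averaging'' for honest diffeomorphisms compatible with the smoothness built into the definition of $\|\cdot\|'$ --- rather than the soft ergodic computation. What makes the trade cost nothing is the full measure-preserving invariance of the extended seminorm: $\|F_i\circ\varphi^i_j\|=\|F_i\|$ holds even though $\varphi^i_j$ is only volume-preserving, so the Hamiltonian invariance of the original norm is never needed directly here. (An alternative in the spirit of Ostrover--Wagner avoids Oxtoby--Ulam by performing the rearrangement explicitly through Hamiltonian isotopies transporting small cells, at the price of a bookkeeping argument absorbing the measure-$o(1)$ leftover sets.)
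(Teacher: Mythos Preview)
Your proposal is correct and follows the same averaging strategy that the paper invokes: the paper's own proof consists only of the observation that compact support reduces the bound $\|\cdot\|\le C(\|\cdot\|_\infty+\|\cdot\|_{L^1})$ to $\|\cdot\|\le C'\|\cdot\|_\infty$, after which it defers entirely to Lemma~2.5 of Ostrover--Wagner \cite{OW05}. You have essentially reconstructed that argument --- averaging pullbacks by measure-preserving maps acting transitively on each $S_l$ and using the invariance of $\|\cdot\|'$ under all compactly supported measure-preserving bijections --- with the minor variant of invoking an Oxtoby--Ulam/Alpern-type density result rather than building the approximating diffeomorphisms explicitly by hand (the alternative you mention parenthetically is closer to what Ostrover--Wagner actually do).
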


\begin{proof}
    Since $F$ has a compact support, $\|F\|\leq C\,(\|F\|_{\infty}+\|F\|_{L^1})$ implies that $\|F\|\leq C'\,\|F\|_{\infty}$ for $C'=C\,(1+\mathrm{Vol}(\mathrm{supp}\,F))$. The rest is same as in the Lemma 2.5 in \cite{OW05}.
\end{proof}

\subsection{Proof of Theorem \ref{Theorem:Lower_Bound_Open_Case}}

\begin{defn}[Hofer \cite{Ho90}]
    The \textit{displacement energy} of a subset $A\subset M$ with respect to the pseudo-distance $\rho$ is defined as
    \[e(A)=\inf\{\rho(\psi,\mathrm{Id})\mid \psi\in\mathrm{Ham}(M,\omega),\,\psi(A)\cap A=\emptyset\},\]
    if the above set is non-empty, and $e(A)=\infty$ otherwise.
\end{defn}

\begin{thm}[Theorem 1.3.A in \cite{EP93}]\label{Theorem:EP_displacement_positive}
    If $\rho$ is a genuine metric on $\mathrm{Ham}(M,\omega)$, then $e(U)>0$ for every non-empty open set $U\subset M$.
\end{thm}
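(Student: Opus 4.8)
Since this is the classical result of Eliashberg--Polterovich (Theorem~1.4.A region of \cite{EP93}), the most economical route in the paper is to cite \cite{EP93} directly. If one wishes to reproduce the argument, the plan is the following. I would argue by contraposition: assuming $e(U)=0$ for some non-empty open $U\subset M$, I would exhibit a non-trivial $\phi\in\mathrm{Ham}(M,\omega)$ with $\vertiii{\phi}=0$, so that $\rho(\phi,\mathrm{Id})=0$ and $\rho$ fails to be a genuine metric. The first, elementary, ingredient is monotonicity of the displacement energy: if $A\subset A'$ then every $\psi$ displacing $A'$ also displaces $A$, since $\psi(A)\cap A\subset\psi(A')\cap A'=\emptyset$; hence $e(A)\le e(A')$. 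In particular, fixing any bounded non-empty open set $B\subset U$ we obtain $e(B)\le e(U)=0$, i.e.\ $B$ can be displaced by Hamiltonian diffeomorphisms of arbitrarily small $\rho$-energy.

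The heart of the matter is then a Sikorav-type estimate --- the very mechanism that reappears below in Claim~\ref{Claim:Sikorav_trick} --- namely: if a bounded open set $W$ admits displacing Hamiltonian diffeomorphisms of arbitrarily small bi-invariant norm $\vertiii{\,\cdot\,}$, then \emph{every} element of $\mathrm{Ham}_c(W,\omega)$ has vanishing bi-invariant norm. Concretely, given $\phi\in\mathrm{Ham}_c(W,\omega)$ one fragments $\phi$ into Hamiltonian diffeomorphisms supported in tiny cubes; using the (cheap) displacement of $W$ these pieces can be transported and reorganized so that, up to a Hofer-small error, $\phi$ becomes a product of conjugates that telescope, leaving only an autonomous remainder supported on a set of volume tending to $0$, whose norm is therefore also arbitrarily small. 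Applying this with $W=B$ and any non-trivial $\phi\in\mathrm{Ham}_c(B,\omega)$ --- such $\phi$ exist because $B$ is open and non-empty --- gives $\vertiii{\phi}=0$, contradicting the assumption that $\rho$ is genuine, and completing the contrapositive.

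The main obstacle is precisely this Sikorav estimate. The naive attempt, using the commutator identity $\phi=[\phi,\psi]\cdot(\psi\phi\psi^{-1})$ together with bi-invariance, only yields the tautology $\vertiii{\phi}\le\vertiii{[\phi,\psi]}+\vertiii{\phi}$ (with $\vertiii{[\phi,\psi]}\le 2\vertiii{\psi}$), so nothing is gained; one genuinely needs the more delicate fragment-transport-and-telescope construction, together with the bookkeeping that controls the autonomous remainder by the volume of its support. This is exactly the technical content of \cite{Si90} and of Claim~\ref{Claim:Sikorav_trick} in the present paper, so in the write-up I would either invoke \cite{EP93} for the whole statement or deduce it from the displaceability estimate once that estimate is in place.
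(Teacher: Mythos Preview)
The paper does not prove this statement; it simply cites \cite{EP93}, so your opening suggestion to quote the reference matches the paper exactly.

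Your sketch of the underlying argument, however, misidentifies the key step. You dismiss the commutator route as yielding only the tautology $\vertiii{\phi}\le\vertiii{[\phi,\psi]}+\vertiii{\phi}$, and conclude that the heavier Sikorav fragmentation machinery is ``genuinely needed''. This is not so: the Eliashberg--Polterovich proof \emph{is} a commutator argument, just with a sharper identity. If $\psi$ displaces $B$ and $\phi,\theta\in\mathrm{Ham}_c(B)$, then $\psi\theta\psi^{-1}$ is supported in $\psi(B)$, hence commutes with $\phi$, and one computes
\[
[\phi,\theta]=[\phi,\theta\cdot(\psi\theta^{-1}\psi^{-1})]=[\phi,[\theta,\psi]],
\]
so $\vertiii{[\phi,\theta]}\le 2\,\vertiii{[\theta,\psi]}\le 4\,\vertiii{\psi}$. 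Letting $\vertiii{\psi}\to 0$ (possible since $e(B)=0$) kills every commutator in $\mathrm{Ham}_c(B)$; picking any nontrivial commutator (or invoking Banyaga's perfectness) then shows $\rho$ is degenerate. No fragmentation, no volume bookkeeping.

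Your proposed route via Claim~\ref{Claim:Sikorav_trick} is also problematic on its own terms: that claim produces a generating Hamiltonian with small $L^{(1,\infty)}$-norm, but for an \emph{arbitrary} bi-invariant $\rho$ you have no bound of $\vertiii{\,\cdot\,}$ in terms of $\|\cdot\|_{L^{(1,\infty)}}$, so the ``autonomous remainder with small support has small norm'' step does not follow without extra hypotheses on $\rho$. The commutator identity above works for any bi-invariant pseudo-norm, purely from conjugation invariance and the triangle inequality.
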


\noindent This result allows us to reduce the proof of Theorem \ref{Theorem:Lower_Bound_Open_Case} to the following claim:

\begin{claim}[See Claim 4.3 in \cite{OW05}]\label{Claim:support_to_0_norm_to_0}
    If $F_i\in C^{\infty}_{0,c}(M)$ is a sequence of functions that satisfies $\sup\{\|F_i\|_{\infty}\}<\infty$ and $\mathrm{Vol}(\mathrm{supp}\,F_i)\xrightarrow{i\to\infty} 0$, then $\|F_i\|\xrightarrow{i\to\infty} 0$.
\end{claim}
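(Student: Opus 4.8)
The plan is to follow the strategy of \cite{OW05}, exploiting the two tools already assembled: the extension of $\|\cdot\|$ to a measure‑preserving–invariant semi‑norm $\|\cdot\|'$ on $L^{\infty}_c(M)$ (Theorem~\ref{TheoremOW:extension}, available since $\|\cdot\|\le C(\|\cdot\|_{\infty}+\|\cdot\|_{L^1})$ by Corollary~\ref{Corollary:upper_bound}), and the averaging Lemma~\ref{LemmaOW:partition}. Since $\|F\|'=\|F\|$ for smooth $F$, it suffices to bound $\|F_i\|'$; after dividing by $\sup_i\|F_i\|_{\infty}<\infty$ we may assume $\|F_i\|_{\infty}\le 1$. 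For disjoint measurable sets $A,B$ with $\mathrm{Vol}(A)=\mathrm{Vol}(B)=\delta$, invariance of $\|\cdot\|'$ under compactly supported measure‑preserving bijections makes $\beta(\delta):=\|\mathbf{1}_A-\mathbf{1}_B\|'$ well defined (independent of the configuration); applying Lemma~\ref{LemmaOW:partition} (extended to $L^{\infty}_c$) to $\mathbf{1}_{A'}-\mathbf{1}_{B'}$ with a configuration $A\subset A'$, $B\subset B'$ of volume $\delta'\ge\delta$ and the partition $\{A,B,(A'\setminus A)\cup(B'\setminus B)\}$ — whose mean over the last piece is $0$ — shows $\beta(\delta)\le\beta(\delta')$, so $\beta$ is non‑decreasing and $\beta(0^+):=\lim_{\delta\to 0^+}\beta(\delta)$ exists. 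The proof then splits into (A) showing $\beta(0^+)=0$, and (B) bounding $\|F\|'$ by (essentially) $\beta(\mathrm{Vol}(\mathrm{supp}\,F))$ plus lower‑order terms.

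Part (A) is where the non‑equivalence hypothesis is used, and I expect it to be the main obstacle. By hypothesis there is, for each $\varepsilon>0$, a function $F_0\in C^{\infty}_{0,c}(M)$ with $\|F_0\|_{\infty}=1$ and $\|F_0\|<\varepsilon$. The key idea is to symmetrize: choose $\psi\in\mathrm{Ham}_c(M,\omega)$ displacing $\mathrm{supp}\,F_0$ (possible since $M$ is open) and set $F:=F_0-\psi^*F_0\in C^{\infty}_{0,c}(M)$, so that $\|F\|\le 2\varepsilon$, the supports of $F_0$ and $\psi^*F_0$ are disjoint, and $F$ and $-F$ become equimeasurable, i.e.\ $\mathrm{Vol}(\{F>t\})=\mathrm{Vol}(\{F<-t\})$ for all $t>0$. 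Applying Lemma~\ref{LemmaOW:partition} to $F$ with the partition $\{F>\tfrac12\},\ \{F<-\tfrac12\},\ \{|F|\le\tfrac12\}\cap\mathrm{supp}\,F$, equimeasurability forces the mean of $F$ over the middle set to vanish and the means over the other two to be exactly opposite of absolute value $\ge\tfrac12$; hence $\tfrac12\,\beta(w)\le\|F\|'<2\varepsilon$ where $w:=\mathrm{Vol}(\{F>\tfrac12\})>0$. Thus $\beta(w)<4\varepsilon$, and since $\beta$ is non‑decreasing, $\beta(0^+)<4\varepsilon$; letting $\varepsilon\to0$ gives $\beta(0^+)=0$. (On a manifold where $\mathrm{supp}\,F_0$ is not displaceable one instead argues as in \cite{OW05}, replacing the displacement by a suitable averaging.)

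For part (B), given $F$ with $\|F\|_{\infty}\le 1$, $\int_M F\,\omega^n=0$ and $v:=\mathrm{Vol}(\mathrm{supp}\,F)$, I would decompose $F=F^+-F^-$ and use a discretized layer cake: $F=\tfrac1K\sum_{k=1}^{K}\bigl(\mathbf{1}_{P_k}-\mathbf{1}_{P'_k}\bigr)+R$, where $P_k=\{F^+\ge k/K\}$, $P'_k=\{F^-\ge k/K\}$, $\|R\|_{\infty}\le 2/K$, $\mathrm{supp}\,R\subset\mathrm{supp}\,F$ and $\mathrm{Vol}(P_k),\mathrm{Vol}(P'_k)\le v$; hence $\|R\|'\le 2C(1+v)/K$. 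Each summand is estimated by $\|\mathbf{1}_{P_k}-\mathbf{1}_{P'_k}\|'\le 2\beta(v)+Cv$: after matching volumes the balanced part contributes $\beta(\mathrm{Vol}(P'_k))\le\beta(v)$, and the leftover indicator $\mathbf{1}_E$ (with $\mathrm{Vol}(E)\le v$) is bounded via $\mathbf{1}_E-\tfrac1m\mathbf{1}_Z=\tfrac1m\sum_{r=1}^{m}(\mathbf{1}_E-\mathbf{1}_{Z_r})$ for a disjoint set $Z$ of volume $m\,\mathrm{Vol}(E)$, giving $\|\mathbf{1}_E\|'\le\beta(\mathrm{Vol}(E))+C\mathrm{Vol}(E)$ upon letting $m\to\infty$ (taking $\mathrm{Vol}(Z)$ as large as possible when $M$ has finite volume). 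Summing and letting $K\to\infty$ yields $\|F\|'\le 2\beta(v)+Cv$. Applying this with $F=F_i$ and $v=v_i=\mathrm{Vol}(\mathrm{supp}\,F_i)\to0$, and invoking $\beta(0^+)=0$, gives $\|F_i\|=\|F_i\|'\to0$, as desired.
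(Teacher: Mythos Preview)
Your overall architecture is sound and leads to a proof, but it takes a more roundabout route than the paper and has one genuine slip. The paper works directly with single indicators: it shows $\|\mathbf{1}_U\|'\to 0$ as $\mathrm{Vol}(U)\to 0$ by taking $F\in C^{\infty}_{0,c}(M)$ with $\|F\|_\infty=1$, $\|F\|<\varepsilon$, choosing a small set $U$ where $|F|>1-\varepsilon$, setting $V=\mathrm{supp}\,F\setminus U$, and applying Lemma~\ref{LemmaOW:partition} to the two-piece partition $\{U,V\}$; the zero-mean condition forces $|\langle F\rangle_V|\le \mathrm{Vol}(U)/\mathrm{Vol}(V)$, so $\|\langle F\rangle_V\mathbf{1}_V\|'$ is small and $\|\mathbf{1}_U\|'\lesssim\varepsilon$ follows. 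For part (B) the paper partitions $\mathrm{supp}\,F$ into pieces $S_i$ of small oscillation and uses Abel summation to write $\sum_i F(\eta_i)\mathbf{1}_{S_i}$ as a telescoping sum of nested indicators, giving $\|F\|'\le C\varepsilon(1+\mathrm{Vol}(\mathrm{supp}\,F))+\|F\|_\infty\cdot\max_i\|\mathbf{1}_{\cup_{k\ge i}S_k}\|'$. So the paper never introduces your quantity $\beta(\delta)=\|\mathbf{1}_A-\mathbf{1}_B\|'$, never needs the symmetrization step, and never needs a layer-cake decomposition; the zero-mean of $F$ is exploited once, in the two-piece averaging, rather than via equimeasurability of $F$ and $-F$.

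The slip: your assertion that $\mathrm{supp}\,F_0$ can be Hamiltonianly displaced ``since $M$ is open'' is false in general (think of a compact set containing the zero section in $T^*T^n$). The fix is easy and already implicit in your setup: use a compactly supported \emph{measure-preserving bijection} instead, which exists whenever $\mathrm{Vol}(M)>2\,\mathrm{Vol}(\mathrm{supp}\,F_0)$, and invoke the invariance of $\|\cdot\|'$ under such maps; but then $F=F_0-\psi^*F_0$ is only in $L^\infty_c$, so you must first extend Lemma~\ref{LemmaOW:partition} from $C_c(M)$ to $L^\infty_c(M)$ (straightforward via lower semicontinuity of $\|\cdot\|'$ under convergence in measure, but it should be stated). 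Your parenthetical escape clause deferring to \cite{OW05} is too vague to count as a proof. With these repairs your argument goes through; the paper's route simply avoids the detour.
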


\noindent Let $B \subset M$ be an embedded open ball with boundary $\partial B$ an embedded sphere, small enough to be displaced by the time-$1$ map of a Hamiltonian $H:[0,1]\times M \to \mathbb{R}$. Let $G:[0,1]\times M \to \mathbb{R}$ be obtained from $H$ by smoothly cutting off outside a neighbourhood $U_t$ of $\phi_H^t(\partial B)$. Then $\phi_G^1$ still displaces $B$, since $\phi_G^t(\partial B) = \phi_H^t(\partial B)$. By Claim~\ref{Claim:support_to_0_norm_to_0}, shrinking $U_t$ makes $\|G\|$ arbitrarily small. Hence the displacement energy of $B$ vanishes, and Theorem~\ref{Theorem:EP_displacement_positive} implies that $\rho$ is degenerate.

\begin{proof}[Proof of Claim \ref{Claim:support_to_0_norm_to_0}]
Let $\mathbf{1}_{U}$ denote the characteristic function of the set $U\subset M$. We prove
\begin{equation}\label{EquationOW:indicator}
    \|\mathbf{1}_U\|'\to 0\text{ as }\mathrm{Vol}(U)\to 0,
\end{equation}
Here, $\|\cdot\|'$ denotes the extension of $\|\cdot\|$ to $L^{\infty}_c(M)$ as in Theorem~\ref{TheoremOW:extension}. 
Since $\|\cdot\|$ is not bounded below by a positive multiple of the $L_{\infty}$-norm, for any $\varepsilon > 0$ there exists $F \in C^{\infty}_{0,c}(M)$ with $\|F\|_{\infty} = 1$ and $\|F\| = \|F\|' < \varepsilon$. 
Choose a small open set $U \subset M$ where $|F(x)| > 1 - \varepsilon$, and set $V := (\mathrm{supp}\,F) \setminus U$. 
Then, applying Lemma~\ref{LemmaOW:partition}, we obtain
\begin{equation}\label{EquationOW:bound1}
    \|\langle F\rangle_{U}\mathbf{1}_{U}\|'\leq\|\langle F\rangle_U\mathbf{1}_U+\langle F\rangle_V\mathbf{1}_V\|'+\|\langle F\rangle_V\mathbf{1}_V\|'\leq\|F\|'+\|\langle F\rangle_V\mathbf{1}_V\|'.
\end{equation}

\noindent From $\int_{M}F\,\omega^n=0$ we get $\mathrm{Vol}(U)\langle F\rangle_U+\mathrm{Vol}(V)\langle F\rangle_V=0$. Combining this with the fact that $\|\cdot\|\leq C(\|\cdot\|_{\infty}+\|\cdot\|_{L^1})$ we get
\[\|\langle F\rangle_V\mathbf{1}_V\|'=\Big\|\frac{\mathrm{Vol}(U)\langle F\rangle_U}{\mathrm{Vol}(V)}\mathbf{1}_V\Big\|'\leq\frac{\mathrm{Vol}(U)}{\mathrm{Vol}(V)}(\|\mathbf{1}_V\|_{\infty}+\|\mathbf{1}_V\|_{L^1})<\varepsilon,\]
provided $\mathrm{Vol}(U)$ is small enough. Now (\ref{EquationOW:bound1}) implies $\|\langle F\rangle_U\mathbf{1}_U\|'<\|F\|'+\varepsilon<2\varepsilon$. Using the fact that $|\langle F\rangle_U|>1-\varepsilon$, and taking $\varepsilon<1/2$ we get $\|\mathbf{1}_U\|'<4\varepsilon$. Since $\|\cdot\|'$ is invariant under compactly supported area preserving bijections, this applies to every bounded set $\widetilde{U}$ with the same measure as $U$, which completes the proof of (\ref{EquationOW:indicator}).

Let $F\in C^{\infty}_{c}(M)$. For $\varepsilon>0$ consider a finite partition $\mathrm{supp}\,F=\bigsqcup_{i=1}^{N}S_i$ into measurable sets $\{S_i\}_{i=1}^{N}$ with $\max( F|_{S_i})-\min(F|_{S_i})\leq\varepsilon$ for every $1\leq i\leq N$. We have
\begin{equation}\label{EquationOW:A}
    \|F\|'=\big\|\sum_{i=1}^N F\cdot\mathbf{1}_{S_i}\big\|'\leq \big\|\sum_{i=1}^{N}(F-F(\eta_i))\cdot\mathbf{1}_{S_i}\big\|'+\big\|\sum_{i=1}^{N}F(\eta_i)\cdot\mathbf{1}_{S_i}\big\|',
\end{equation}
where $\eta_i\in S_i$ is an arbitrary point. Assume that $F(\eta_i)\leq F(\eta_j)$ for $i\leq j$. Using the fact that $\|\cdot\|\leq C(\|\cdot\|_{\infty}+\|\cdot\|_{L^1})$ and the fact that $\|\sum_{i=1}^N(F-F(\eta_i))\cdot \mathbf{1}_{S_i}\|_{\infty}\leq\varepsilon$, we get
\begin{equation}\label{EquationOW:B}
    \|\sum_{i=1}^N(F-F(\eta_i))\cdot \mathbf{1}_{S_i}\|'\leq C\varepsilon(1+\mathrm{Vol}(\mathrm{supp}\,F)).
\end{equation}
Additionally, define $F(\eta_0)=0$ and use Abel's summation formula to get
\begin{equation}
\begin{aligned}\label{EquationOW:C}
\big\|\sum_{i=1}^N F(\eta_i)\cdot\mathbf{1}_{S_i}\big\|' 
&= \big\|\sum_{i=1}^{N}(F(\eta_i)-F(\eta_{i-1}))\cdot\mathbf{1}_{\cup_{k=i}^N S_k}\big\|' \\
&\leq \Big(\sum_{i=1}^{N} F(\eta_i)-F(\eta_{i-1})\Big)
   \cdot \max_{1\leq i\leq N}\|\mathbf{1}_{\cup_{k=i}^N S_k}\|' \\
&\leq \|F\|_{\infty}\cdot\max_{1\leq i\leq N}\|\mathbf{1}_{\cup_{k=i}^N S_k}\|'.
\end{aligned}
\end{equation}
Note that for every $1\leq i\leq N$ we have $\mathrm{Vol}(\cup_{k=i}^N S_k)\to 0$ as $\mathrm{Vol}(\mathrm{supp}\,F)\to 0$, so combining (\ref{EquationOW:indicator}), (\ref{EquationOW:A}),(\ref{EquationOW:B}) and (\ref{EquationOW:C}) we get the desired result.

\end{proof}

\newpage

\end{document}